\newtheorem{thm}{Theorem}[section]
\newtheorem{lem}[thm]{Lemma}
\newtheorem{cor}[thm]{Corollary}
\newtheorem{defi}[thm]{Definition}
\newtheorem{prop}[thm]{Proposition}
\newtheorem{rem}[thm]{Remark}
\newenvironment{proof}{\noindent {\bf Proof \phantom{9}}}
{\hfill $\square$ \vspace{0.25cm}}
\def\be{\begin{eqnarray}}
\def\ee{\end{eqnarray}}
\def\ben{\begin{eqnarray*}}
\def\een{\end{eqnarray*}}
\numberwithin{equation}{section}
\numberwithin{figure}{section}
\def\be{\begin{eqnarray}}
\def\ee{\end{eqnarray}}
\newcommand{\RR}{\mathbb{R}}
\newcommand{\PP}{\mathbb{P}}
\newtheorem{example}[thm]{Example}
\def\bit{\begin{itemize}}
\def\eit{\end{itemize}}
\def\bc{\begin{center}}
\def\ec{\end{center}}
\def\bthm{\begin{thm}}
\def\ethm{\end{thm}}
\def\bcor{\begin{cor}}
\def\ecor{\end{cor}}
\def\bprop{\begin{prop}}
\def\eprop{\end{prop}}
\def\blem{\begin{lem}}
\def\elem{\end{lem}}
\def\brem{\begin{rem}}
\def\erem{\end{rem}}
\def\bdes{\begin{description}}
\def\edes{\end{description}}
\def\beq{\begin{equation}}
\def\eeq{\end{equation}}
\def\benu{\begin{enumerate}}
\def\eenu{\end{enumerate}}
\def\beqar{\begin{eqnarray}}
\def\eeqar{\end{eqnarray}}
\def\beqarr{\begin{eqnarray*}}
\def\eeqarr{\end{eqnarray*}}
\def\RR{{\mathbb R}}  
\def\part{\partial}
\def\d#1dt{\frac{d#1}{dt}}    
\title{\bf Polymorphic evolution sequence and evolutionary
  branching}
\author{ Nicolas Champagnat\thanks{EPI TOSCA, INRIA Sophia Antipolis
    -- M\'editerran\'ee, 2004 route des
Lucioles, BP. 93, 06902 Sophia Antipolis Cedex, France; e-mail:
Nicolas.Champagnat@sophia.inria.fr.}, Sylvie
M\'el\'eard\thanks{CMAP, Ecole Polytechnique, CNRS, route de
Saclay, 91128 Palaiseau Cedex-France; e-mail:
sylvie.meleard@polytechnique.edu.}}
\date{\today}
\begin{document}

\maketitle

\begin{abstract}
  We are interested in the study of models describing the evolution of
  a polymorphic population with mutation and selection in the specific
  scales of the biological framework of adaptive dynamics. The
  population size is assumed to be large and the mutation rate
  small. We prove that under a good combination of these two scales,
  the population process is approximated in the long time scale of
  mutations by a Markov pure jump process describing the successive
  trait equilibria of the population. This process, which generalizes
  the so-called trait substitution sequence, is called polymorphic
  evolution sequence. Then we introduce a scaling of the size of
  mutations and we study the polymorphic evolution sequence in the
  limit of small mutations. From this study in the neighborhood of
  evolutionary singularities, we obtain a full mathematical
  justification of a heuristic criterion for the phenomenon of
  evolutionary branching. To this end we finely analyze the asymptotic
  behavior of 3-dimensional competitive Lotka-Volterra systems.
\end{abstract}

\bigskip
\emph{MSC 2000 subject classification:} 92D25, 60J80, 37N25, 92D15, 60J75
\bigskip

\emph{Key-words:} Mutation-selection individual-based model,
fitness of invasion,  adaptive dynamics,  polymorphic evolution
sequence, competitive Lotka-Volterra system, evolutionary
branching.

\section{Introduction}
\label{sec:intro}

We consider an asexual population in which each individual's
ability to survive and reproduce is characterized by a
quantitative trait, such as the size, the age at maturity, or the
rate of food intake. Evolution, acting on the trait distribution
of the population, is the consequence of three basic mechanisms:
\emph{heredity}, which transmits traits to new offsprings,
\emph{mutation}, driving a variation in the trait values in the
population, and \emph{selection} between these different trait
values, which is due to the competition between individuals for
limited resources or area. Adaptive dynamics models aim at
studying the interplay between these different
mechanisms~\cite{hofbauer-sigmund-90,marrow-law-al-92,metz-nisbet-al-92}.
Our approach is based on a microscopic individual-based model that
details the ecological dynamics of each individual.  From the
simulated dynamics of this process initially issued from a
monomorphic population, we observe that it is essentially
single-modal centered around a trait that evolves continuously,
until some time where the population divides into two separate
sub-populations that are still in interaction but are centered
around distinct traits at a distance increasing with time. This
phenomenon, called Evolutionary Branching, is thought to be a
possible explanation of phenotypic separation without geographic
separation~\cite{dieckmann-doebeli-99}. (One speaks about
sympatric speciation though the population is asexual). Our aim in
this paper is to understand the dynamics of the process in long
time scales and to highlight the evolutionary branching
phenomenon. In particular, we want to prove the conjecture stated
by Metz et al. \cite{metz-geritz-al-96} and giving conditions on
the parameters of the model allowing one to predict whether
evolutionary branching will occur or not.

To this aim, we follow the basic description of adaptive dynamics
based on the biologically motivated assumptions of rare mutations
and large population. Under these assumptions, we prove that the
microscopic process describing the ecological dynamics  can be
approximated by a Markov pure jump process on the set of point
measures on the trait space. The transitions of this process are
given by the long time behavior of competitive Lotka-Volterra
systems. They describe the succession of mutant invasions followed
by a fast competition phase between the mutant population and the
resident one. In the mutation time scale, and for large
populations, the successful traits in the competition are given by
the nontrivial equilibria of Lotka-Volterra systems which model
the dynamics of the sizes of each sub-population corresponding to
each resident or mutant trait. We thus generalize the situation
introduced in \cite{metz-geritz-al-96} and mathematically
developed in \cite{champagnat-06}, where the parameters of the
model prevent the coexistence of two traits. In that case, the
microscopic model converges to a monomorphic (one trait support)
pure jump process, called Trait Substitution Sequence (TSS). This
limit involves a timescale separation between the mutations and
the population dynamics driving the competition between traits.

In this article, we relax the assumption of non-coexistence and
obtain a \emph{polymorphic evolution sequence} (PES), allowing
coexistence of several traits in the population, from the same
microscopic model described in Section~\ref{sec:model}. In
Section~\ref{sec:heuristics}, we introduce the competitive
symmetric Lotka-Volterra systems describing the competition
between traits.  We prove in Section~\ref{sec:cvge} that the PES
takes the form of a Markov jump process on the set of measures on
the trait space ${\cal X}$ that are finite sums of Dirac masses
with positive weights, and we characterize the transitions of this
process in terms of the long time behaviour of competitive
Lotka-Volterra systems. In Section~\ref{sec:CP}, we explain why
the assumptions ensuring the convergence to the PES are satisfied
as long as no more than two traits coexist. In this case, the
dynamics of the PES can be explicitely characterized. Next
(Section~\ref{sec:br}), we study the transition from a monomorphic
population to a stable dimorphic population, and give a full
mathematical justification of the criterion for evolutionary
branching proposed in \cite{metz-geritz-al-96}, under the
assumption of small mutation effects. To this end, we first show
in Sections~\ref{sec:caneq} and~\ref{sec:ES} that, away from
evolutionary singularities, the support of the PES stays monorphic
and converges to an ODE known as the ``canonical
equation''~\cite{dieckmann-law-96}. Finally, in
Section~\ref{sec:thm-br}, we characterize the situations where
evolutionary branching occurs by specializing to our situation the
results of Zeeman~\cite{zeeman-93} on the asymptotic behavior of
3-dimensional competitive Lotka-Volterra systems.

Let us insist on the importance of the limits. Here we are concerned
by the combination of the limits of large populations and rare
mutations, followed by a limit of small mutations. An alternative
approach would be first to study the limit of large population alone,
giving in the limit an integro-differential partial differential
equation for the density of traits~\cite{champagnat-ferriere-al-06};
and next to study a limit of small mutations on this equation with a
proper time scaling that would lead to some dynamics on the set of
finite sums of Dirac masses on the trait space. The second part of
this program has already been partly studied
in~\cite{diekmann-jabin-al-05} in a specific model, but is related to
difficult problems on Hamilton-Jacobi equations with
constraints~\cite{barles-perthame-06}. In this case, evolutionary
branching is numerically observed, but not yet fully
justified. Another approach would be to combine the three limits we
consider directly at the level of the microscopic model, allowing one
to study the evolutionary process on several time
scales~\cite{bovier-champagnat-08}. This requires a finer analysis of
the invasion and competition phases after the appearance of a new
mutant. Note that all these approaches are based on the same idea of
separation between the time scales of mutation and competition.

\section{Models and Polymorphic Evolution Sequence (PES)}
\label{sec:model}

Let us introduce here the main models on which our approach is based.

\subsection{The individual-based model}
\label{sec:micro}

The microscopic model we use is an individual-based model with
density-dependence, which has been already studied in ecological
or evolutionary contexts by many
authors~\cite{fournier-meleard-04,champagnat-ferriere-al-06}.

The trait space ${\cal X}$ is assumed to be a compact subset of
$\mathbb{R}^l$, $l\geq 1$.  For any $x,y\in{\cal X}$, we introduce the
following biological parameters
\begin{description}
\item[$\lambda(x)\in\mathbb{R}_+$] is the rate of birth from an individual
  holding trait $x$.
\item[$\mu(x)\in\mathbb{R}_+$] is the rate of ``natural'' death for an
  individual holding trait $x$.
\item[$r(x):=\lambda(x)-\mu(x)$] is the ``natural'' growth rate of
trait $x$.
\item[$K\in\mathbb{N}$] is a parameter scaling the
population size and
  the resources.
   \item[${\alpha(x,y)\over K} \in\mathbb{R}_+$] is the competition
kernel
  representing the pressure felt by an individual holding trait $x$
  from an individual holding trait $y$. It is not assumed to be a
  symmetric function.
\item[$ u_K\, p(x)$] with $ u_K, p(x)\in(0,1]$, is the
  probability that a
  mutation occurs in a birth from an individual with trait $x$.
   Small $u_K$ means rare
  mutations.
\item[$m(x,h)dh$] is the law of $h=y-x$, where the mutant trait
$y$ is
  born from an individual with trait $x$. Its support is a subset of
  ${\cal X}-x=\{ y-x:y\in{\cal X}\}$.
\end{description}

\noindent We consider, at any time $t\geq 0$, a finite number
$N_t$ of individuals, each of them holding a trait value in ${\cal
X}$. Let us denote by $x_1,\ldots,x_{N_t}$ the trait values of
these individuals. The state of the population at time $t\geq 0$,
rescaled by $K$, is described by the finite point measure on
${\cal X}$
\begin{equation}
  \label{eq:nu_t}
  \nu^K_t=\frac{1}{K}\sum_{i=1}^{N_t}\delta_{x_i},
\end{equation}
where $\delta_x$ is the Dirac measure at $x$. Let $\:
\langle\nu,f\rangle$ denote the integral of the measurable
function $f$ with respect to the measure $\nu$ and $\mbox{Supp}(\nu)$
denote its support.

\noindent Then $\: \langle\nu^K_t,{\bf 1}\rangle=\frac{N_t}{K}$ and
for any $x\in {\cal X}$, the positive number $\langle\nu^K_t,{\bf
1}_{\{x\}}\rangle$ is called {\bf the density} at time $t$ of
trait $x$.

\noindent  Let ${\cal M}_F$ denote the set of finite nonnegative
measures on ${\cal X}$, equipped with the weak topology, and
define
\begin{equation*}
  {\cal M}^K=\left\{\frac{1}{K}\sum_{i=1}^n\delta_{x_i}:n\geq 0,\
  x_1,\ldots,x_n\in{\cal X}\right\}.
\end{equation*}

\noindent An individual holding trait $x$ in the population
$\nu^K_t$ gives birth to another individual with rate $\lambda(x)$
and dies with rate
\begin{equation*}
  \mu(x)+\int\alpha(x,y)\nu^K_t(dy)=\mu(x)
  +\frac{1}{K}\sum_{i=1}^{N_t}\alpha(x,x_i).
\end{equation*}
The parameter $K$ scales the strength of competition, thus
allowing the coexistence of more individuals in the population. A
newborn holds the same trait value as its progenitor with
probability $1-u_K p(x)$, and with probability $u_K p(x)$, the
newborn is a mutant whose trait value $y$ is chosen according to
$y=x+h$, where $h$ is a random variable with law $m(x,h)dh$. In
other words, the process $(\nu^K_t,t\geq 0)$ is a ${\cal
  M}^K$-valued Markov process with infinitesimal generator defined for
any bounded measurable functions $\phi$ from ${\cal M}^K$ to
$\mathbb{R}$ by
\begin{align}
  L^K\phi(\nu) & =\int_{\cal
    X}\left(\phi\left(\nu+\frac{\delta_x}{K}\right)
    -\phi(\nu)\right)(1-u_K p(x))\lambda(x)K\nu(dx) \notag \\
  & +\int_{\cal X}\int_{\mathbb{R}^l}
  \left(\phi\left(\nu+\frac{\delta_{x+h}}{K}\right)-\phi(\nu)\right)
  u_K p(x)\lambda(x)m(x,h)dhK\nu(dx) \notag \\
  & +\int_{\cal X}\left(\phi\left(\nu-\frac{\delta_x}{K}
    \right)-\phi(\nu)\right)\left(\mu(x)+\int_{\cal
      X}\alpha(x,y)\nu(dy)\right)K\nu(dx).
  \label{eq:generator-renormalized-IPS}
\end{align}
For $\nu\in {\cal M}^K$, the integrals with respect to $K\nu(dx)$
in~(\ref{eq:generator-renormalized-IPS}) correspond to sums over all
individuals in the population.  The first term (linear) describes the
births without mutation, the second term (linear) describes the births
with mutation, and the third term (non-linear) describes the deaths by
oldness or competition. The density-dependent non-linearity of the
third term models the competition in the population, and hence drives
the selection process.
\medskip

\noindent Let us denote by~(A) the following three assumptions
\begin{description}
\item[\textmd{(A1)}] $\lambda$, $\mu$ and $\alpha$ are measurable functions,
  and there exist $\bar{\lambda},\bar{\mu},\bar{\alpha}<+\infty$ such that
  \begin{equation*}
    \lambda(\cdot)\leq\bar{\lambda},\quad \mu(\cdot)\leq\bar{\mu}
    \quad\mbox{and}\quad\alpha(\cdot,\cdot)\leq\bar{\alpha}.
  \end{equation*}
  \item[\textmd{(A2)}] $r(x)=\lambda(x)-\mu(x)>0$ for any $x\in{\cal X}$, and there exists
  $\underline{\alpha}>0$ such that
$ \ \underline{\alpha}\leq\alpha(\cdot,\cdot)$.

\item[\textmd{(A3)}] There exists a function
$\bar{m}:\mathbb{R}^l\rightarrow\mathbb{R}_+$ such
  that $m(x,h)\leq \bar{m}(h)$ for any $x\in{\cal X}$ and
  $h\in\mathbb{R}^l$, and $\int \bar{m}(h)dh<\infty$.

\end{description}
\noindent For fixed $K$, under~(A1) and~(A3) and assuming that
$\mathbf{E}(\langle\nu^K_0,\mathbf{1}\rangle)<\infty$, the existence
and uniqueness in law of a process on $\mathbb{D}(\mathbb{R}_+, {\cal
  M}^K)$ with infinitesimal generator $L^K$ has been proved
in~\cite{fournier-meleard-04}.  Assumption~(A2) prevents the
population to explode and to go extinct too fast.

\subsection{An example}
\label{sec:ex}

The birth-death-competition-mutation process described above has
been heuristically studied in various ecological or evolutionary
contexts. Let us illustrate the phenomenon of evolutionary
branching we are interested in with a simple example,
adapted from a classical model (Roughgarden~\cite{roughgarden-79},
Dieckmann and Doebeli~\cite{dieckmann-doebeli-99}). In this model,
there is a single optimal trait value for the birth rate and a
symmetric competition kernel. The parameters are the following:
\begin{equation}
  \label{eq:param}
  \begin{gathered}
    {\cal X}=[-2,2]; \quad \mu(x)\equiv 0; \quad p(x)\equiv p, \\
    \lambda(x)=\exp(-x^2/2\sigma_b^2), \\
    \alpha(x,y)=\tilde{\alpha}(x-y)=\exp(-(x-y)^2/2\sigma^2_\alpha).
  \end{gathered}
\end{equation}
and $m(x,h)dh$ is the law of a ${\cal N}(0,\sigma^2)$ r.v.\ $Y$
(centered Gaussian with variance $\sigma^2$) conditioned on
$x+Y\in{\cal X}$.

\noindent The growth rate $\lambda(x)$ is maximal at $x=0$ and there
is local competition between traits, in the sense that $\alpha(x,y)$
is maximal for $x=y$ and is close to 0 when $|x-y|$ is large. If the
competition kernel was flat ($\alpha\equiv 1$), evolution would favor
mutant traits with maximal growth rate.  However, if competition is
local, numerical simulations of the microscopic model give different
patterns, as shown in Fig.~\ref{fig:ex}. The pattern of
Fig.~\ref{fig:ex}(b), where the population, initially composed of
traits concentrated around a single trait value, is driven by the
evolutionary forces to states where the population is composed of two
(or more) groups, concentrated around different trait values. This
phenomenon is called evolutionary branching and has been observed in
many biological models (see
e.g.~\cite{metz-geritz-al-96,kisdi-99,geritz-vandermeijden-al-99}).
It is believed to be a possible mechanism of traits separation that
could lead to speciation~\cite{dieckmann-doebeli-99}.

\medskip \noindent
In this particular model, the possibility of evolutionary
branching seems to be governed by the values of $\sigma_b$ and
$\sigma_\alpha$, which represent respectively the width of the
trait region with high growth rate and the interaction range. In
Fig.~\ref{fig:ex}(a), $\sigma_\alpha>\sigma_b$ and there is no
evolutionary branching, whereas in Fig.~\ref{fig:ex}(b),
$\sigma_\alpha<\sigma_b$ and evolutionary branching occurs. We
observe in both simulations that, in a first phase, the population
trait support is concentrated around a mean trait value that
converges to 0. In a second phase, new mutants feel two different
selective pressures: high growth rate (traits close to 0) and
competition (traits far from the rest of the population). If
$\sigma_\alpha$ is small, the selection pressure is weaker for
traits away from 0 and allows the apparition of new branches.
The goal of this article is to justify mathematically this
heuristics.

\begin{figure}[h]
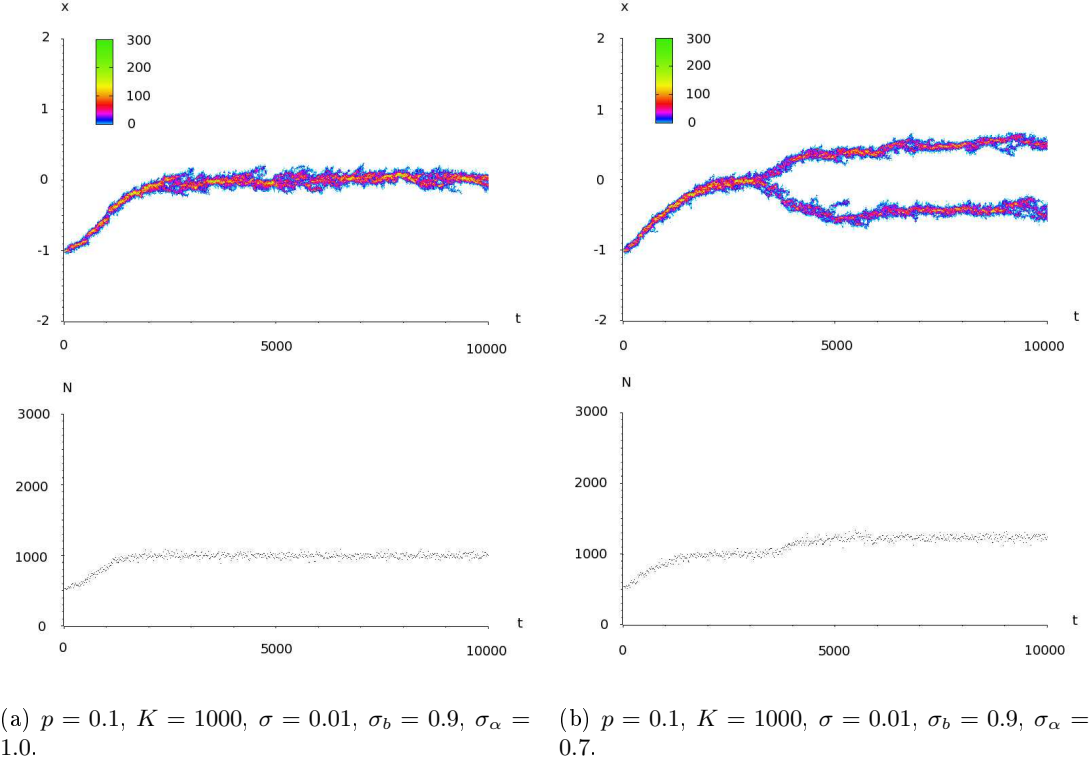

  \centering
  \mbox{\subfigure[$p=0.1$, $K=1000$, $\sigma=0.01$, $\sigma_b=0.9$, $\sigma_\alpha=1.0$.]%
{\epsfig{
figure=DD_1.eps, width=.47\textwidth}}\quad
    \subfigure[$p=0.1$, $K=1000$, $\sigma=0.01$, $\sigma_b=0.9$, $\sigma_\alpha=0.7$.]%
{\epsfig{
figure=DD_2.eps, width=.47\textwidth}}} \\
\caption{{\small Numerical simulations of the trait distribution
    (upper panels) and population size (lower panels) of the
    microscopic model with parameters~(\ref{eq:param}).  The initial
    population is composed of $K$ individuals all with trait $-1.0$.}}
  \label{fig:ex}
\end{figure}

\subsection{On scales}
\label{sec:heuristics}

In order to analyze the phenomenon of evolutionary branching, we are
going to consider three biological asymptotics in the individual-based
model: large population ($K\rightarrow+\infty$), rare mutations
($u_K\rightarrow 0$) and small mutation amplitude.  The combination of
the two first scales will allow us to describe the polymorphic
evolution sequence, we will focus on. This limit amounts to
approximate the simulated dynamics of Fig.~\ref{fig:ex}(a) and~(b) of
the previous section by the one of Fig.~\ref{fig:ex2}(a) and~(b),
respectively.
\begin{figure}[h]
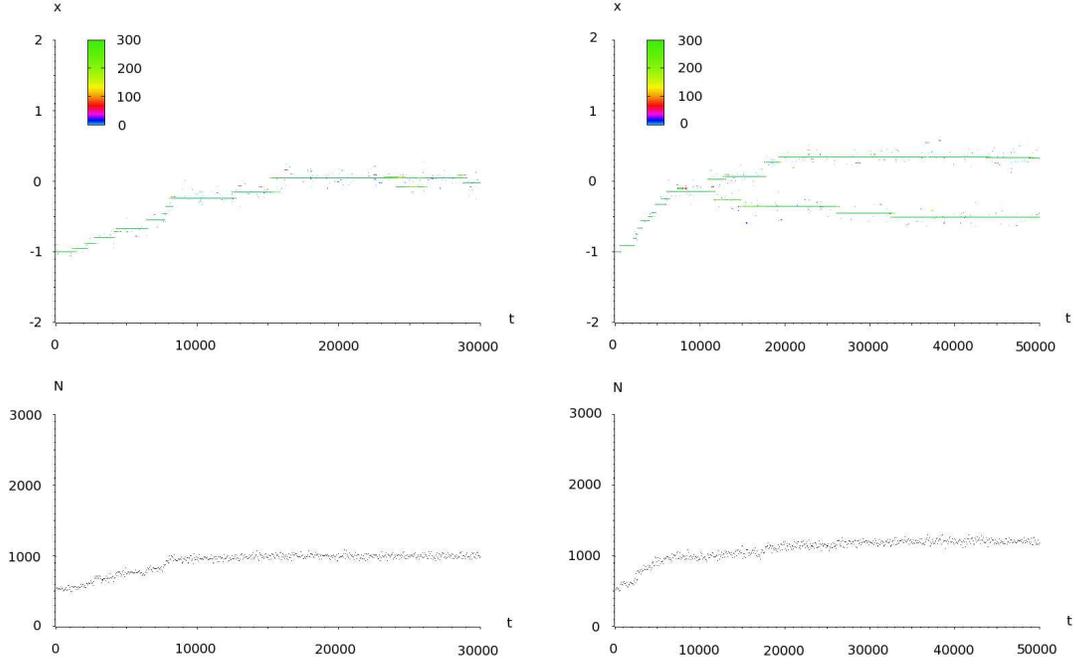

  \centering
  \mbox{\subfigure[$\mu=0.0001$, $K=1000$, $\sigma=0.08$, $\sigma_b=0.9$, $\sigma_\alpha=1.0$.]%
    {\epsfig{
        figure=DD_3.eps, width=.47\textwidth}}\quad
    \subfigure[$\mu=0.0001$, $K=1000$, $\sigma=0.08$, $\sigma_b=0.9$, $\sigma_\alpha=0.7$.]%
    {\epsfig{
        figure=DD_4.eps, width=.47\textwidth}}} \\
  \caption{{\small Numerical simulations of the trait distribution (upper
    panels) and population size (lower
    panels) of the microscopic model with parameters~(\ref{eq:param}).
    The initial population is composed of $K$ individuals all with trait
    $-1.0$. The value of $\sigma$ is higher than in Fig.~\ref{fig:ex} so
    that the jumps are visible.}}
  \label{fig:ex2}
\end{figure}
These scales and the biological heuristics of this approach were
introduced in~\cite{metz-geritz-al-96}. The main interest of the
assumption of rare mutations is the separation between ecological
and evolutionary time scales: the selection process has sufficient
time between two mutations to eliminate disadvantaged traits. Then
evolution proceeds by a succession of phases of mutant invasion
and phases of competition between traits.   We will choose
parameters such that the ecological and evolutionary time scales
are separated, leading to an evolutionary dynamics where
competition phases are infinitesimal on the mutation time scale.
In addition, the large population assumption allows one to assume
a deterministic population dynamics between mutations, so that the
outcome of the competition can be predicted. More formally,
between two mutations, a finite number of traits are present,
namely $x_1,\ldots,x_d$, and the population dynamics can be
reduced to a Markov process in $\mathbb{N}^d$.

\noindent Assume that, for all $i\in\{1,\ldots,d\}$,
$\frac{1}{K}\langle \nu^K_0,\mathbf{1}_{\{x_i\}}\rangle$ has
bounded second-order moments and converge in distribution to
$n_i(0)\in\mathbb{R}_+$. Then, as proved
in~\cite[Thm.4.2]{champagnat-ferriere-al-08}, when
$K\rightarrow+\infty$, the process
$\frac{1}{K}(\langle\nu^K_t,\textbf{1}_{\{x_1\}}\rangle
,\ldots,\langle\nu^K_t,\textbf{1}_{\{x_d\}}\rangle)$ converges in
distribution for the Skorohod topology to the solution of the
$d$-dimensional competitive Lotka-Volterra system
$LV(d,\mathbf{x})$ with initial condition
$(n_1(0),\ldots,n_d(0))$.

\begin{defi}
  \label{not:LV}
  For any $\mathbf{x}=(x_1,\ldots,x_d)\in{\cal X}^d$, we denote by
  $LV(d,\mathbf{x})$ the competitive Lotka-Volterra system defined by
  \begin{equation}
    \label{eq:LVcompl}
    \dot{\mathbf{n}}(t)=F^{\mathbf{x}}(\mathbf{n}(t)),\quad
    1\leq i\leq d, \quad t\geq 0,
  \end{equation}
  where $\mathbf{n}(t)=(n_1(t),\ldots,n_d(t))$,
  \begin{equation}
    \label{eq:def-F_i}
    F^{\mathbf{x}}_i(\mathbf{n}):=n_iG^{\mathbf{x}}_i(\mathbf{n})\quad
    \hbox{ where }\quad
    G_i^{\mathbf{x}}(\mathbf{n}):=r(x_i)-\sum_{j=1}^d\alpha(x_i,x_j)n_j.
  \end{equation}
\end{defi}
\noindent
 The equilibria of $LV(d,\mathbf{x})$ are given by the
intersection of hyperplanes $(P_i)_{1\leq i\leq d}$, where $P_i$
has equation either $n_i=0$ or $G_i^{\mathbf{x}}(\mathbf{n})=0$.
We need to introduce the following notion of coexisting traits.

\begin{defi}
  \label{def:coex}
  For any $d\geq 0$, we say that $x_1,\ldots,x_d$
    \textbf{coexist} if $LV(d,\mathbf{x})$ admits a unique non-trivial
    equilibrium $\bar{\mathbf{n}}(\mathbf{x})\in(\RR_+^*)^d$ locally
    strongly stable, in the sense that the eigenvalues of the Jacobian
    matrix of $LV(d,\mathbf{x})$ at $\bar{\mathbf{n}}(\mathbf{x})$
    have all (strictly) negative real part. In particular, for all
    $i\in\{1,\ldots,d\}$,
    \begin{equation}
      \label{eq:F=0}
      G_i^{\mathbf{x}}(\bar{\mathbf{n}}(\mathbf{x}))=0 \quad \hbox{ and }
      \quad DF^{\mathbf{x}}(\bar{\mathbf{n}}(\mathbf{x}))=((-\alpha(x_i,x_j)\bar{\mathbf{n}}_i(\mathbf{x})))_{1\leq i,j\leq d}.
    \end{equation}

\end{defi}
\bigskip
\noindent In the monomorphic case ($d=1$) and when $r(x)>0$, the
competitive Lotka-Volterra system $LV(1,x)$ takes the form of the
so-called logistic equation
\begin{equation}
  \label{eq:LV-d1}
  \dot{n}_x=n_x(r(x)-\alpha(x,x)n_x).
\end{equation}
The unique stable equilibrium of this equation is
$\bar{n}(x)=r(x)/\alpha(x,x)$.

 \noindent Similarly, in the
dimorphic case where $d=2$, the system $LV(2,(x,y))$ takes the
form
\begin{equation}
  \label{eq:LV-d2}
  \begin{cases}
    \dot{n}_x=n_x(r(x)-\alpha(x,x)n_x-\alpha(x,y)n_y) \\
    \dot{n}_y=n_y(r(y)-\alpha(y,x)n_x-\alpha(y,y)n_y).
  \end{cases}
\end{equation}
Under Assumption (A2), the equilibria of~(\ref{eq:LV-d2}) are
$(0,0)$, $(\bar{n}(x),0)$, $(0,\bar{n}(y))$ and possibly a
non-trivial equilibrium in $(\mathbb{R}_+^*)^{2}$. It is known
(see e.g.~\cite{istas-05}) that the non trivial equilibrium exists
and is locally strongly stable, (traits $x$ and $y$ coexist), if
and only if $f(x;y)>0$ and $f(y;x)>0$, where
\begin{equation}
  \label{eq:fitness-d=1}
  f(y;x)=r(y)-\alpha(y,x)\bar{n}(x).
\end{equation}

\subsection{Convergence to the Polymorphic Evolution Sequence (PES)}
\label{sec:cvge}

Our goal here is to examine the asymptotic behavior of the
microscopic process when the population size grows to infinity as well
as the mutation rate converges to 0, in a long time scale. Before
stating our convergence result, we first give an idea of the
argument used, extending the biological heuristics
of~\cite{metz-geritz-al-96} and the special case of the trait
substitution sequence (TSS) developed in~\cite{champagnat-06} (see
also Section~\ref{sec:TSS}).

\subsubsection{Idea of the proof}
\label{sec:idea-pf}

\noindent Let us roughly describe the successive steps of
mutation, invasion and competition. The two steps of the invasion
of a mutant in a given population are firstly the stabilization of
the resident population before the mutation and secondly the
invasion of the mutant population after the mutation.

\noindent Fix $\eta>0$. In the first step, assuming that $d$
traits $x_1,\ldots,x_d$ that coexist are present, we prove that
the population densities
$(\langle\nu^K_t,\mathbf{1}_{\{x_1\}}\rangle,\ldots,\langle\nu^K_t,\mathbf{1}_{\{x_d\}}\rangle)$
belong to the $\eta$-neighborhood of
$\bar{\mathbf{n}}(\mathbf{x})$ with high probability for large $K$
until the next mutant $y$ appears. To this aim, we use large
deviation results on the problem of exit from a
domain~\cite{freidlin-wentzell-84} to prove that the time needed
for the population densities to leave the $\eta$-neighborhood of
$\bar{\mathbf{n}}(\mathbf{x})$ is bigger than $\exp(VK)$ for some
$V>0$ with high probability. Therefore, until this exit time, the
rate of mutation from trait $x_i$ in the population is close to
$u_Kp(x_i)\lambda(x_i)K\bar{n}_i(\mathbf{x})$ and thus, the first
mutation appears before this exit time if one assumes that
$$
\frac{1}{K u_K}\ll e^{VK}.
$$
In particular, the mutation rate from trait $x_i$ on the time scale
$t/K u_K$ is close to
$$
p(x_i)\lambda(x_i)\bar{n}_i(\mathbf{x}).
$$

\noindent In the second step, we divide the invasion of a given
mutant trait $y$ into 3 phases shown in Fig.~\ref{fig:inv-fix}, in
a similar way as done classically by population geneticists
dealing with selective
sweeps~\cite{kaplan-hudson-al-89}.

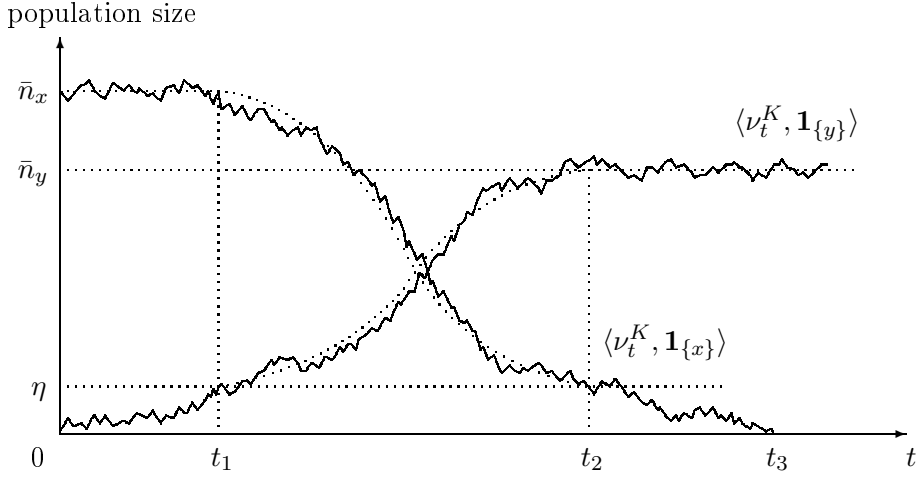
\begin{figure}[ht]
  \begin{center}
    \begin{picture}(350,180)(-20,-10)
      \put(0,0){\vector(1,0){320}} \put(0,0){\vector(0,1){150}}
      \put(-11,-12){0} \put(-11,15){$\eta$}
      \dottedline{3}(0,18)(250,18)
      \put(-16,97){$\bar{n}_y$} \put(-16,127){$\bar{n}_x$}
      \dottedline{3}(0,100)(300,100) \dottedline{3}(0,130)(60,130)
      \put(-20,156){population size} \put(57,-12){$t_1$}
      \put(197,-12){$t_2$} \put(267,-12){$t_3$} \put(320,-12){$t$}
      \dottedline{3}(60,0)(60,130) \dottedline{3}(200,0)(200,100)
      \qbezier[27](60,130)(100,125)(130,70)
      \qbezier[25](130,70)(150,28)(200,18)
      \qbezier[25](60,18)(105,26)(130,55)
      \qbezier[25](130,55)(155,95)(200,100)
      \dottedline{0.5}(0,1)(3,6)(5,3)(8,4)(10,2)(12,6)(13,7)(15,4)(17,7)%
(20,3)(24,5)(26,4)(29,8)(33,6)(35,9)(38,4)(40,7)(43,6)(45,10)(47,8)(48,8)%
(50,12)(52,9)(55,13)(56,15)(57,14)(58,17)(59,15)(60,18)(61,19)(64,15)%
(66,16)(69,20)(71,20)(72,19)(75,23)(77,24)(80,28)(81,26)(83,29)(86,28)%
(88,29)(90,27)(91,24)(93,24)(95,27)(98,25)(100,29)(101,29)(103,28)(106,33)%
(107,31)(109,34)(110,34)(112,32)(113,36)(115,37)(118,40)(120,39)(123,44)%
(125,43)(126,46)(128,50)(129,49)(132,54)(134,54)(136,61)(138,59)(141,66)%
(143,68)(145,68)(148,75)(149,73)(151,79)(153,81)(154,84)(156,83)(158,89)%
(160,93)(161,92)(164,94)(166,95)(167,92)(170,97)(173,93)(175,95)(177,97)%
(181,93)(182,91)(184,96)(186,96)(189,98)(191,103)(193,104)(196,101)(199,103)%
(202,105)(203,102)(205,104)(208,101)(210,102)(213,99)(215,97)(217,98)%
(220,96)(222,101)(224,101)(226,99)(229,103)(231,104)(233,101)(236,102)%
(239,99)(240,97)(242,98)(244,98)(247,101)(249,99)(252,102)(254,102)(255,103)%
(257,101)(258,101)(261,98)(262,96)(264,99)(267,97)(270,102)(272,102)%
(273,104)(275,101)(278,100)(279,98)(281,99)(283,97)(288,102)(290,102)
      \put(255,115){$\langle\nu^K_t,\mathbf{1}_{\{y\}}\rangle$}
      \dottedline{0.5}(0,130)(3,126)(6,130)(8,132)(9,132)(11,134)(13,130)%
(15,128)(17,127)(20,133)(24,130)(26,132)(29,129)(30,131)(33,128)(35,129)%
(38,126)(40,131)(43,132)(45,129)(47,134)(48,133)(50,131)(52,132)(55,128)%
(56,130)(57,127)(58,125)(59,128)(60,129)(61,123)(64,125)(66,121)(69,124)%
(71,120)(72,119)(75,123)(77,123)(80,118)(81,119)(83,115)(84,116)(86,113)%
(88,115)(90,116)(91,113)(93,116)(95,115)(97,116)(98,112)(100,108)(101,106)%
(103,109)(106,106)(107,102)(109,103)(111,98)(113,100)(116,97)(118,94)%
(119,95)(121,88)(123,90)(124,86)(126,80)(128,82)(130,77)(131,71)(132,73)%
(135,66)(137,68)(139,62)(140,57)(141,58)(143,52)(145,54)(148,48)(149,48)%
(151,43)(153,42)(154,43)(156,39)(157,39)(159,32)(160,33)(162,28)(164,30)%
(166,25)(167,23)(170,26)(172,24)(174,23)(175,25)(177,24)(181,27)(182,26)%
(184,23)(186,24)(189,20)(191,22)(193,24)(194,21)(196,19)(198,20)(200,18)%
(202,15)(203,17)(205,20)(208,18)(210,21)(213,17)(215,14)(217,16)(220,12)%
(222,11)(224,13)(226,9)(227,10)(229,7)(231,5)(233,7)(234,5)(236,8)%
(239,6)(240,9)(242,10)(244,7)(247,10)(249,6)(250,7)(252,5)(254,8)%
(255,4)(257,6)(258,4)(261,2)(262,4)(264,3)(267,1)(269,2)(270,0)
      \put(205,32){$\langle\nu^K_t,\mathbf{1}_{\{x\}}\rangle$}
    \end{picture}
  \end{center}
  \caption{{\small The three steps of the invasion of a
    mutant trait $y$ in a monomorphic population with trait $x$.}}
  \label{fig:inv-fix}
\end{figure}

\noindent In the first phase (between time 0 and $t_1$ in
Fig.~\ref{fig:inv-fix}), the number of mutant individuals is
small, and the resident population stays close to its equilibrium
density $\bar{\mathbf{n}}(\mathbf{x})$. Therefore, the dynamics of
the mutant individuals is close to a branching process with birth
rate $\lambda(y)$ and death rate
$\mu(y)+\sum_{i=1}^d\alpha(y,x_i)\bar{n}_i(\mathbf{x})$. Hence,
the growth rate of this branching process is equal to the
so-called fitness
\begin{equation}
  \label{eq:fitness}
  f(y; \textbf{x}) = f(y;x_1,\ldots,x_d)=r(y)-\sum_{j=1}^d\alpha(y,x_j)\bar{n}_j(\mathbf{x}),
\end{equation}
describing the ability of the initially rare mutant trait $y$ to
invade the equilibrium resident population with traits
$x_1,\ldots,x_d$. If this fitness is positive (i.e.\ if the
branching process is super-critical), the probability that the
mutant population reaches density $\eta >0$ at some time $t_1$ is
close to the probability that the branching process reaches $\eta
K$, which is itself close to its survival probability
$[f(y;\mathbf{x})]_+/\lambda(y)$ when $K$ is large.

\noindent In the second phase (between time $t_1$ and $t_2$ in
Fig.~\ref{fig:inv-fix}), we use the fact that, when
$K\rightarrow+\infty$,  the population densities
$(\langle\nu_t^K,\mathbf{1}_{\{x_1\}}\rangle,\ldots,\langle\nu_t^K,\mathbf{1}_{\{x_d\}}\rangle,\langle\nu_t^K,\mathbf{1}_{\{y\}}\rangle)$
are close to the solution of the Lotka-Volterra system
$LV(d+1,(x_1,\ldots,x_d,y))$ with same initial condition, on any
time interval $[0,T]$. We will need an assumption (called~(B1) in
Section~\ref{sec:hyp}) ensuring that, if $\eta$ is sufficiently
small, then any solution to the Lotka-Volterra system starting in
some neighborhood of
$(\bar{n}_1(\mathbf{x}),\ldots,\bar{n}_d(\mathbf{x}),0)$ converges
to a new equilibrium $\mathbf{n}^*\in\mathbb{R}^{d+1}$ as time
goes to infinity. Therefore, the population densities reach with
high probability the $\eta$-neighborhood of $\mathbf{n}^*$ at some
time $t_2$.

\noindent Finally, in the last phase, we use the same idea as in
the first phase: under the assumption (called~(B2) in
Section~\ref{sec:hyp}) that $\mathbf{n}^*$ is a strongly locally
stable equilibrium, we approximate the densities of the traits
$x_j$ such that $n^*_j=0$ by branching processes which are
sub-critical. Therefore, they reach $0$ in finite time and the
process comes back to the first step until the next mutation.

\noindent We will prove that the duration of these three phases is
of order $\log K$. Therefore, under the assumption
$$
\log K \ll \frac{1}{Ku_K},
$$
the next mutation occurs after these three phases are completed with
high probability.

\subsubsection{Assumptions}
\label{sec:hyp}

As explained above, we need to introduce two assumptions on the
Lotka-Volterra systems involved in the previous heuristics. These
assumptions involve the fitness function defined
in~(\ref{eq:fitness}). This function is linked to Lotka-Volterra
systems by the following property.

\begin{prop}
  \label{prop:fitness} Assume that the traits $x_1,\ldots,x_d\in{\cal X}$
  coexist. Then
  \begin{description}
  \item[\textmd{(i)}] For any $i\in\{1,\ldots,d\}$,
    $f(x_i;x_1,\ldots,x_d)=0$.
  \item[\textmd{(ii)}] If $f(y;x_1,\ldots,x_d)<0$, the equilibrium
    $\ (\bar{n}_1(\mathbf{x}),\ldots,\bar{n}_d(\mathbf{x}),0)$ of
    $\ LV(d+1,(x_1,\ldots,x_d,y))$ is locally strongly stable, and if
    $\ f(y;x_1,\ldots,x_d)>0$, this equilibrium is unstable.
  \end{description}
\end{prop}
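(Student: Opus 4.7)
The plan is to derive both parts directly from Definition 2.3 together with a linearization argument at the equilibrium $(\bar{\mathbf n}(\mathbf x),0)$ of $LV(d+1,(x_1,\ldots,x_d,y))$.

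For part (i), I would simply unfold the definitions. By Definition 2.3, coexistence of $x_1,\ldots,x_d$ means that $G_i^{\mathbf x}(\bar{\mathbf n}(\mathbf x))=0$ for every $i=1,\ldots,d$, i.e.\ $r(x_i)-\sum_{j=1}^d\alpha(x_i,x_j)\bar n_j(\mathbf x)=0$. Comparing with the definition of the fitness in~(\ref{eq:fitness}), the right-hand side is exactly $f(x_i;x_1,\ldots,x_d)$, so the identity is immediate.

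For part (ii), the idea is that the Jacobian of $LV(d+1,(x_1,\ldots,x_d,y))$ at $(\bar{\mathbf n}(\mathbf x),0)$ is block triangular. I would compute $\partial F_i^{(\mathbf x,y)}/\partial n_j$ from $F_i=n_iG_i$. At the point $(\bar{\mathbf n}(\mathbf x),0)$, the relation $G_i^{\mathbf x}(\bar{\mathbf n}(\mathbf x))=0$ for $i\le d$ kills the $\delta_{ij}G_i$ term in the first $d$ rows, leaving the first $d\times d$ block equal to $DF^{\mathbf x}(\bar{\mathbf n}(\mathbf x))=(-\alpha(x_i,x_j)\bar n_i(\mathbf x))_{1\le i,j\le d}$ and the $(d+1)$-th column of the first $d$ rows equal to $(-\alpha(x_i,y)\bar n_i(\mathbf x))_{1\le i\le d}$. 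In the last row, the factor $n_{d+1}=0$ kills every $n_i\partial G_i/\partial n_j$ term, so only the diagonal entry $G_{d+1}$ survives, and its value at $(\bar{\mathbf n}(\mathbf x),0)$ is precisely $r(y)-\sum_{j=1}^d\alpha(y,x_j)\bar n_j(\mathbf x)=f(y;x_1,\ldots,x_d)$. The Jacobian is therefore of the form
\begin{equation*}
J=\begin{pmatrix} DF^{\mathbf x}(\bar{\mathbf n}(\mathbf x)) & b \\ 0 & f(y;x_1,\ldots,x_d) \end{pmatrix},
\end{equation*}
with $b\in\RR^d$.

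The conclusion follows from the block-triangular structure: the spectrum of $J$ is the disjoint union of the spectrum of $DF^{\mathbf x}(\bar{\mathbf n}(\mathbf x))$, whose eigenvalues all have strictly negative real part by the coexistence of $x_1,\ldots,x_d$, and the scalar eigenvalue $f(y;x_1,\ldots,x_d)$. If $f(y;x_1,\ldots,x_d)<0$, all eigenvalues of $J$ have strictly negative real parts, yielding local strong stability; if $f(y;x_1,\ldots,x_d)>0$, then $J$ has a positive eigenvalue and the equilibrium is unstable (via the stable manifold/Hartman-Grobman theorem). There is no real obstacle here, only a careful bookkeeping of the partial derivatives; the key observation that makes everything trivial is that the last row of $J$ vanishes off the diagonal because $n_{d+1}=0$ at the equilibrium, which is precisely what decouples the linearized dynamics of the mutant from those of the resident traits.
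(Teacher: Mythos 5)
Your proposal is correct and follows essentially the same route as the paper: part (i) is the immediate identity $f(x_i;\mathbf{x})=G_i^{\mathbf{x}}(\bar{\mathbf{n}}(\mathbf{x}))=0$, and part (ii) rests on the same block-triangular Jacobian at $(\bar{\mathbf{n}}(\mathbf{x}),0)$, whose spectrum is the union of the spectrum of $DF^{\mathbf{x}}(\bar{\mathbf{n}}(\mathbf{x}))$ (all with negative real part by coexistence) and the scalar $f(y;\mathbf{x})$. The only difference is that you spell out the computation of the partial derivatives and the vanishing of the last row off the diagonal, which the paper leaves implicit.
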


\begin{proof}
The first point is immediate. The second point comes from the
following relation between Jacobian matrices of Lotka-Volterra
systems
\begin{equation*}
  DF^{(x_1,\ldots,x_d,y)}(\bar{n}_1(\mathbf{x}),\ldots,\bar{n}_d(\mathbf{x}),0)
  =\left(
    \begin{array}{c|c}
      DF^{\mathbf{x}}(\bar{\mathbf{n}}(\mathbf{x})) &
      \begin{array}{c}
        -\bar{n}_1(\mathbf{x})\alpha(x_1,y) \\ \vdots \\
        -\bar{n}_d(\mathbf{x})\alpha(x_d,y)
      \end{array}
      \\ \hline \\
      \begin{array}{ccc}
        0 & \ldots & 0
      \end{array}
      & f(y;\mathbf{x})
    \end{array}
  \right).
\end{equation*}
Since $x_1,\ldots,x_d$ coexist, all the eigenvalues of
$DF^{\mathbf{x}}(\bar{\mathbf{n}}(\mathbf{x}))$ have negative real
parts.
\end{proof}
\bigskip

\noindent Let~(B) denote the following
Assumptions~(B1) and~(B2).

\begin{description}
\item[\textmd{(B1)}] Given any $\mathbf{x}=(x_1,\ldots,x_d)\in{\cal
    X}^d$ such that $x_1,\ldots,x_d$ coexist, for Lebesgue almost any
  mutant trait $y\in{\cal X}$ such that
  $f(y;\mathbf{x})>0$, there exists a neighborhood ${\cal
    U}\subset\mathbb{R}^{d+1}$ of
  $(\bar{n}^1(\mathbf{x}),\ldots,\bar{n}^d(\mathbf{x}),0)$ such that
  all the solutions of $LV(d+1,(x_1,\ldots,x_{d},y))$ with initial
  condition in ${\cal U}\cap(\mathbb{R}^*_+)^{d+1}$ converge as
  $t\rightarrow+\infty$ to a
  unique equilibrium in $(\mathbb{R}_+)^{d+1}$, denoted by
  \begin{equation*}
    \mathbf{n}^*(x_1,\ldots,x_{d},y).
  \end{equation*}
\item[\textmd{(B2)}] Writing for simplicity $x_{d+1}=y$ and $\mathbf{n}^*$ for
  $\mathbf{n}^*(x_1,\ldots,x_{d+1})$, let
  \begin{equation*}
    I(\mathbf{n}^*):=\big\{i\in\{1,\ldots,d+1\}:
    n^*_i>0\big\}\quad \hbox{ and }\quad \mathbf{x}^*=(x_i;i\in
    I(\mathbf{n}^*)).
  \end{equation*}
  Then, for Lebesgue almost any mutant trait $x_{d+1}$ as above,
  $\{x_i;i\in I(\mathbf{n}^*)\}$ coexist and
  \begin{equation*}
   \hbox{for all } j\not\in
  I(\mathbf{n}^*)\ , \quad f(x_j;\mathbf{x}^*)<0.
  \end{equation*}
\end{description}

\noindent Assumption~(B1) prevents cycles or chaotic dynamics in
the Lotka-Volterra systems. Moreover, it also prevents situations
as in Fig.~\ref{fig:eq}, where the equilibrium $\mathbf{n}^*$ is
unstable. In this case, a solution of the Lotka-Volterra system
$LV(d+1,(x_1,\ldots,x_d,y))$ starting from a point in any
neighborhood of
$(\bar{n}^1(\mathbf{x}),\ldots,\bar{n}^d(\mathbf{x}),0)$,
represented by the curved line in Fig.~\ref{fig:eq}, does not need
to converge to $\mathbf{n}^*$.

\begin{figure}[h]
  \begin{center}
    \psfrag{N1}{$(\bar{n}^1(\mathbf{x}),\ldots,\bar{n}^d(\mathbf{x}),0)$}
    \psfrag{N2}{$\mathbf{n}^*$}
    \includegraphics[width=6cm]{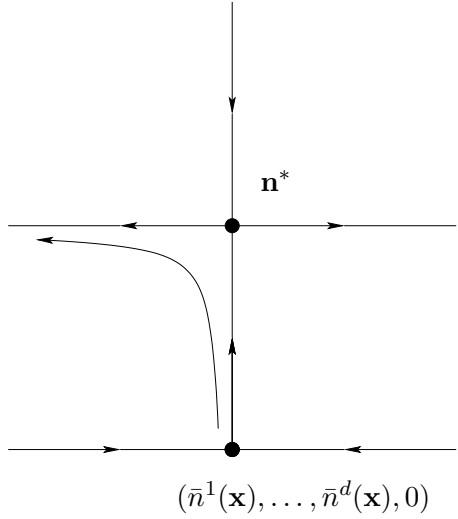}
  \end{center}
  \caption{{\small Assumption~(B1) prevents such situation.}}
  \label{fig:eq}
\end{figure}
\noindent Assumption~(B2) is stated in the way permitting one to
use the comparison with branching processes argument described in
Section~\ref{sec:idea-pf} when a mutant trait fixates in the
population. 

\begin{defi}
  \label{def:hyperbolic}
  An equilibrium $\mathbf{n}$ of $LV(d,(x_1,\ldots,x_d))$ is
  hyperbolic if the Jacobian matrix of $LV(d,(x_1,\ldots,x_d))$ at
  $\mathbf{n}$ has no eigenvalue with 0 real part.
\end{defi}
Assumption~(B2) can also be replaced by one of the following simpler
two assumptions.

\begin{description}
\item[\textmd{(B3)}] For Lebesgue almost any mutant trait $x_{d+1}$ as in~(B1),
  $\mathbf{n}^*$ is hyperbolic.
\item[\textmd{(B4)}] For Lebesgue almost any mutant trait
$x_{d+1}$ as in~(B1),
  $\mathbf{n}^*$ is strongly locally stable.
\end{description}

\begin{prop}
  \label{prop:equiv-hyp} Assumptions~(B1) and~(B2) are equivalent to
  Assumptions~(B1) and~(B3), and to
  Assumptions~(B1) and~(B4).
\end{prop}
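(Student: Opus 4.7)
The plan is to exploit the block triangular structure of the Jacobian of $LV(d+1,(x_1,\ldots,x_{d+1}))$ at $\mathbf{n}^*$, which reduces the spectral analysis of $\mathbf{n}^*$ to that of two independent pieces: the reduced system on the coexisting coordinates and a diagonal piece carrying the fitnesses of the absent traits.

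First I would set $I=I(\mathbf{n}^*)$, reorder indices so that $I=\{1,\ldots,|I|\}$ appears first, and compute directly from $F_i^{\mathbf{x}}(\mathbf{n})=n_iG_i^{\mathbf{x}}(\mathbf{n})$ that
$$
DF^{\mathbf{x}}(\mathbf{n}^*)=\begin{pmatrix} A & B \\ 0 & D\end{pmatrix},
$$
where $A=(-n^*_i\,\alpha(x_i,x_j))_{i,j\in I}$ coincides with $DF^{\mathbf{x}^*}(\mathbf{n}^*|_I)$, the Jacobian of the reduced system $LV(|I|,\mathbf{x}^*)$ at its positive point $\mathbf{n}^*|_I$, and $D=\mathrm{diag}\bigl(f(x_j;\mathbf{x}^*)\bigr)_{j\notin I}$ (the zero lower-left block comes from $\partial_j F_i|_{\mathbf{n}^*}=\delta_{ij}G_i(\mathbf{n}^*)$ when $n^*_i=0$). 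Hence the spectrum of $DF^{\mathbf{x}}(\mathbf{n}^*)$ is the union of $\mathrm{Sp}(A)$ and $\{f(x_j;\mathbf{x}^*):j\notin I\}$.

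With this in hand the three equivalences are essentially read off. (B4)$\Rightarrow$(B3) is immediate from the definitions. (B1)+(B2)$\Rightarrow$(B4): coexistence of $\{x_i;i\in I\}$ means $A=DF^{\mathbf{x}^*}(\mathbf{n}^*|_I)$ has spectrum in the open left half-plane, and $f(x_j;\mathbf{x}^*)<0$ for $j\notin I$ gives the same for the entries of $D$, so all eigenvalues of $DF^{\mathbf{x}}(\mathbf{n}^*)$ have strictly negative real part. Conversely (B1)+(B4)$\Rightarrow$(B2): strong stability forces both $\mathrm{Sp}(A)$ and the diagonal of $D$ into the open left half-plane, which yields the fitness inequalities and also the local strong stability of $\mathbf{n}^*|_I$ for the reduced system; uniqueness of a positive equilibrium of $LV(|I|,\mathbf{x}^*)$ follows from invertibility of $(\alpha(x_i,x_j))_{i,j\in I}$ (equivalent to invertibility of $A$ since $n^*_i>0$), so $\{x_i;i\in I\}$ indeed coexist in the sense of Definition~2.3.

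The remaining, and really only non-routine, step is (B1)+(B3)$\Rightarrow$(B4). The point is that (B1) supplies Lyapunov stability of $\mathbf{n}^*$: by assumption every orbit starting in $\mathcal{U}\cap(\RR_+^*)^{d+1}$ converges to $\mathbf{n}^*$, so $\mathbf{n}^*$ is (at least) locally attractive from an open neighborhood. If $\mathbf{n}^*$ is moreover hyperbolic, the stable/unstable manifold theorem provides a local unstable manifold through $\mathbf{n}^*$ of dimension equal to the number of eigenvalues with strictly positive real part; any such direction would furnish initial conditions arbitrarily close to $\mathbf{n}^*$ whose forward orbits leave every small neighborhood, contradicting (B1). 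Therefore no eigenvalue of $DF^{\mathbf{x}}(\mathbf{n}^*)$ has positive real part, and by hyperbolicity none lies on the imaginary axis either, giving (B4). I expect this Lyapunov/unstable-manifold argument to be the only delicate point; everything else is bookkeeping around the block structure above.
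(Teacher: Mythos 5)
Your block-triangular decomposition of $DF^{(x_1,\ldots,x_{d+1})}(\mathbf{n}^*)$ and the deductions (B2)$\Rightarrow$(B4)$\Rightarrow$(B3) and (B1)+(B4)$\Rightarrow$(B2) are exactly the paper's bookkeeping; your explicit treatment of the uniqueness of the positive equilibrium of the reduced system, via invertibility of $(\alpha(x_i,x_j))_{i,j\in I}$, is in fact more careful than the paper, which leaves that point implicit. The problem lies in the one step you yourself single out as delicate, (B1)+(B3)$\Rightarrow$(B4): as written, the argument does not close.

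You claim that (B1) makes $\mathbf{n}^*$ ``locally attractive from an open neighborhood'' and that an unstable direction would produce initial conditions arbitrarily close to $\mathbf{n}^*$ whose orbits escape, ``contradicting (B1)''. But the open set appearing in (B1) is $\mathcal{U}\cap(\mathbb{R}_+^*)^{d+1}$ with $\mathcal{U}$ a neighborhood of $(\bar{n}_1(\mathbf{x}),\ldots,\bar{n}_d(\mathbf{x}),0)$ --- the \emph{pre-invasion} equilibrium --- not a neighborhood of $\mathbf{n}^*$. Since $f(y;\mathbf{x})>0$, these are distinct points, and (B1) asserts nothing about orbits started near $\mathbf{n}^*$ itself; the existence of escaping orbits on an unstable manifold of $\mathbf{n}^*$ is therefore not, by itself, in contradiction with (B1). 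The way to extract the contradiction --- and the paper's actual argument --- is through the \emph{stable} manifold: for a hyperbolic equilibrium, the set of initial conditions whose orbit converges to $\mathbf{n}^*$ is an (immersed) submanifold of dimension $l$, the number of eigenvalues of $DF^{(x_1,\ldots,x_{d+1})}(\mathbf{n}^*)$ with negative real part. By (B1) this set contains the nonempty open set $\mathcal{U}\cap(\mathbb{R}_+^*)^{d+1}$, which is impossible unless $l=d+1$; hence (B4). This is a one-line repair, but it is the repair that makes the step work --- the unstable-manifold phrasing you chose cannot be completed without reverting to this dimension count.
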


\begin{proof}
  Let $k:=\mbox{Card}(I(\mathbf{n}^*))$.  Assume that
  $x_1,\ldots,x_{d+1}$ are reordered in a way such that
  $I(\mathbf{n}^*)=\{1,2,\ldots,k\}$. Then it is clear, by the
  definition of coexistence and the fact that
  \begin{equation*}
    DF^{(x_1,\ldots,x_{d+1})}(\mathbf{n}^*)
    =\left(
      \begin{array}{c|c}
        DF^{\mathbf{x}^*}(n^*_1,\ldots,n^*_k) &
        (-\alpha(x_i,x_j)n^*_j)_{1\leq i\leq k,\: k+1\leq j\leq d+1}
        \\ \hline \\
        \begin{array}{ccc}
          \mbox{\LARGE $0$}
        \end{array}
        &
        \begin{array}{ccc}
          f(x_{k+1},\mathbf{x}^*) & & 0 \\ & \ddots & \\ 0 & &
          f(x_{d+1},\mathbf{x}^*)
        \end{array}
      \end{array}
    \right)
  \end{equation*}
  that (B2) implies (B4) which also trivially implies
  (B3). Assuming~(B3), the stable manifold theorem (see
  e.g.~\cite{guckenheimer-holmes-83} pp. 13--14) says that the set of
  points such that the solution of $LV(d+1,(x_1,\ldots,x_{d+1}))$
  started at this point converges to $\mathbf{n}^*$ is a submanifold
  of $(\mathbb{R}_+^*)^{d+1}$ of dimension $l$, where $l$ is the number
  of eigenvalues of $DF^{(x_1,\ldots,x_{d+1})}(\mathbf{n}^*)$ with
  negative real part. In particular, if $l<d+1$, this manifold does
  not contain an open set of $(\mathbb{R}_+^*)^{d+1}$, which is in
  contradiction with~(B1). Therefore, $l=d+1$, which implies~(B2).
\end{proof}

\noindent Therefore, Assumption~(B2) essentially means that
$\mathbf{n}^*$ is hyperbolic, which is a property satisfied under
very weak assumptions. In Section ~\ref{sec:CP}, various
situations ensuring Assumptions~(B1) and~(B2) will be discussed.

\subsubsection{Definition of the PES and Convergence Theorem}
\label{sec:thm}

Before stating our convergence result, let us first describe the
limiting process $(Z_t;t\geq 0)$ of the population process
$(\nu^K_{t/K u_K};t\geq 0)$ on the mutation time scale. This is a pure
jump Markov process in ${\cal
  M}_0\subset{\cal M}_F$ defined by
\begin{equation*}
  {\cal M}_0:=\left\{\sum_{i=1}^d\bar{n}_i(\mathbf{x})\delta_{x_i};
    \:d\geq 1,\:x_1,\ldots,x_n\in{\cal X}\mbox{\ coexist}\right\},
\end{equation*}
which describes the successive population states at the
evolutionary (mutation) time scale.

\noindent As explained in Section~\ref{sec:idea-pf}, the quantity $\:
p(x_j)\lambda(x_j)\bar{n}_j(\mathbf{x})$ is the re-scaled mutation
rate in the resident sub-population with trait $x_j$ and size
$\bar{n}_j(\mathbf{x})$. When a mutant $x_j+h$ is chosen with law
$m(x_j,h)dh$, the quantity $\frac{[f(x_j+h;\mathbf{x})]_+}
{\lambda(x_j+h)}$ is the invasion probability of the mutant. Once the
latter has invaded, the new population state is given by the
asymptotic behavior of the Lotka-Volterra system described in
Assumption (B1). Because of the timescale separation \eqref{eq:u_K-K},
the stabilization of the population at its new equilibrium occurs
before the next mutation and within infinitesimal time.

\noindent Hence, the process $Z$ will jump   $$\hbox{from }\
\sum_{i=1}^d \bar{n}_i(\mathbf{x})\delta_{x_i}\  \hbox{ to }\
\sum_{i=1}^d n^*_i(x_1,\ldots,x_d,x_j+h)\delta_{x_i}
+n^*_{d+1}(x_1,\ldots,x_d,x_j+h)\delta_{x_j+h}$$ with
infinitesimal rate
\begin{equation}
\label{taux}
  p(x_j)\lambda(x_j)\bar{n}_j(\mathbf{x})\frac{[f(x_j+h;\mathbf{x})]_+}
  {\lambda(x_j+h)}m(x_j,h)dh
\end{equation}
for all $j\in\{1,\ldots,d\}$.  In other words, the infinitesimal
generator of the process $Z$ will be
\begin{multline}
  {\cal L}\varphi\Big(\sum_{i=1}^d\bar{n}_i(\mathbf{x})\delta_{x_i}\Big)
  =\int_{{\cal X}} dh\sum_{j=1}^d p(x_j)\lambda(x_j)\bar{n}_j(\mathbf{x})
  \frac{[f(x_j+h;\mathbf{x})]_+}
  {\lambda(x_j+h)}m(x_j,h)\times \\
  \Big(
    \varphi\Big(\sum_{i=1}^{d}n^*_i(x_1,\ldots,x_d,
    x_j+h)\delta_{x_i}+n^*_{d+1}(x_1,\ldots,x_d, x_j+h)\delta_{x_j+h}\Big)
    -\varphi\Big(\sum_{i=1}^d\bar{n}_i(\mathbf{x})\delta_{x_i}\Big)
  \Big).
  \label{eq:gene-PES}
\end{multline}

\noindent We call this process \emph{Polymorphic Evolution
Sequence} (PES), by analogy with the so-called ``Trait
Substitution Sequence'' (TSS) described in
Section~\ref{sec:TSS}.

\begin{prop}
  Under Assumptions \textup{(A)} and \textup{(B)}, the PES is
  well-defined on $\mathbb{R}_+$ and belongs almost surely to ${\cal
    M}_0$ for all time.
\end{prop}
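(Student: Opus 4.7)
The plan is to construct $Z$ as a standard pure jump Markov process by drawing jump times from an exponential clock whose rate equals the total outgoing rate from the current state and, independently, the jump distribution read off from \eqref{eq:gene-PES}. Three facts have to be verified: (i) the total jump rate out of any state $\sum_{i=1}^d\bar{n}_i(\mathbf{x})\delta_{x_i}\in{\cal M}_0$ is uniformly bounded; (ii) the post-jump state almost surely again lies in ${\cal M}_0$; (iii) jump times do not accumulate in finite time.

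For (i), the key observation is that $[f(y;\mathbf{x})]_+\le r(y)\le\lambda(y)$, so the fraction $[f(x_j+h;\mathbf{x})]_+/\lambda(x_j+h)$ appearing in \eqref{taux} is bounded by $1$. Combined with $p\le 1$, $\lambda\le\bar{\lambda}$ and $m(x_j,\cdot)\le\bar{m}$ with $\int\bar{m}(h)\,dh<\infty$ coming from (A1) and (A3), the total rate is at most $\bar{\lambda}(\int\bar{m}(h)\,dh)\sum_{j=1}^d\bar{n}_j(\mathbf{x})$. To bound the last sum uniformly in $d$ and in $\mathbf{x}$, I would use the equilibrium identity $r(x_i)=\sum_{j=1}^d\alpha(x_i,x_j)\bar{n}_j(\mathbf{x})$ from \eqref{eq:F=0} together with the lower bound $\alpha\ge\underline{\alpha}$ of (A2), which immediately gives $\sum_{j=1}^d\bar{n}_j(\mathbf{x})\le\bar{\lambda}/\underline{\alpha}$. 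Hence the total jump rate is bounded by $\bar{\lambda}^2(\int\bar{m}(h)\,dh)/\underline{\alpha}$, a constant depending only on the global parameters.

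For (ii), conditional on the mutant $y=x_j+h$ that triggers the jump, (B1) guarantees that $\mathbf{n}^*(x_1,\ldots,x_d,y)$ is well-defined for Lebesgue-a.e.\ such $y$, and (B2) (equivalently (B4) via Proposition~\ref{prop:equiv-hyp}) guarantees that the surviving traits $\mathbf{x}^*=(x_i:i\in I(\mathbf{n}^*))$ coexist. It remains to check that the non-zero components of $\mathbf{n}^*$ coincide with $\bar{\mathbf{n}}(\mathbf{x}^*)$, so that the post-jump measure really has the form appearing in ${\cal M}_0$: this is automatic because $(n^*_i)_{i\in I(\mathbf{n}^*)}$ is a strictly positive equilibrium of $LV(|I(\mathbf{n}^*)|,\mathbf{x}^*)$, and Definition~\ref{def:coex} applied to the coexisting family $\mathbf{x}^*$ asserts the uniqueness of such an equilibrium. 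Since $m(x_j,h)\,dh$ is absolutely continuous w.r.t.\ Lebesgue measure (by (A3)), the Lebesgue-null exceptional set in (B1)--(B2) is charged with probability zero by the driving Poisson random measure, so a.s.\ each jump lands in ${\cal M}_0$.

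Finally, (iii) follows immediately from the uniform bound in (i): the jump process is dominated by a Poisson process with constant intensity $\bar{\lambda}^2(\int\bar{m}(h)\,dh)/\underline{\alpha}$, so there are a.s.\ finitely many jumps on any bounded time interval, and the process is defined on all of $\mathbb{R}_+$. The main subtle point is (ii): one has to confirm that the ``Lebesgue almost any'' exceptional sets from~(B1) and~(B2) are negligible for the law of the mutant trait, and that combining (B2) with the uniqueness clause of Definition~\ref{def:coex} correctly identifies the surviving weights with $\bar{\mathbf{n}}(\mathbf{x}^*)$; everything else is routine from the boundedness estimates in (i).
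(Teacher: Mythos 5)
Your proposal is correct and follows essentially the same route as the paper's (much terser) proof: boundedness of the jump rates from Assumption~(A) together with the equilibrium identity~(\ref{eq:F=0}), and membership of the post-jump state in ${\cal M}_0$ from~(B1) and~(B2), with absolute continuity of $m(x_j,h)\,dh$ disposing of the Lebesgue-null exceptional sets. Your explicit bound $\sum_j\bar{n}_j(\mathbf{x})\le\bar{\lambda}/\underline{\alpha}$ and the identification of the surviving weights with $\bar{\mathbf{n}}(\mathbf{x}^*)$ via the uniqueness clause of Definition~\ref{def:coex} are exactly the details the paper leaves implicit.
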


\begin{proof}
  It follows from Assumption~(A) and from~(\ref{eq:F=0}) that the jump
  rates are bounded. Moreover, by Assumption~(B1),
  $\mathbf{n}^*(x_1,\ldots,x_n,y)$ is well-defined for almost all
  mutant traits $y$ such that $f(y;\mathbf{x})>0$, and by
  Assumption~(B2), for such $y$, $\sum_{i=1}^d
  n^*_i(x_1,\ldots,x_d,y)\delta_{x_i}
  +n^*_{d+1}(x_1,\ldots,x_d,y)\delta_{y}\in{\cal M}_0$.
\end{proof}

\begin{thm}
  \label{thm:PES-fdd}
  Assume \textup{(A)} and \textup{(B)}. Take $x_1,\ldots,x_d\in{\cal
    X}$ that coexist and assume that
  $\nu^K_0=\sum_{i=1}^dn^K_i\delta_{x_i}$ with $n^K_i\rightarrow
  \bar{n}_i(\mathbf{x})$ in probability for all $1\leq i\leq
  d$. Assume finally that
  \begin{equation}
    \label{eq:u_K-K}
    \forall V>0,\quad \log K\ll \frac{1}{Ku_K}\ll \exp(VK).
  \end{equation}
  Then, $(\nu^K_{t/K u_K};t\geq 0)$ converges to the process $(Z_t;
  t\geq 0)$ with infinitesimal generator~(\ref{eq:gene-PES}) and with
  initial condition
  $Z_0=\sum_{i=1}^d\bar{n}_i(\mathbf{x})\delta_{x_i}$. The convergence
  holds in the sense of finite dimensional distributions on ${\cal
    M}_F$ equipped with the topology induced by the functions
  $\nu\mapsto\langle \nu,f\rangle$ with $f$ bounded and measurable on
  ${\cal X}$.
\end{thm}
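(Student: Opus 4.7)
The plan is to adapt the proof of the TSS limit of Champagnat to the polymorphic setting, handling coexistence through Assumptions~(B). I would first reduce the finite-dimensional convergence to convergence over one ``PES-jump'' interval. Since (A1)--(A2) together with \eqref{eq:F=0} ensure that the jump rates \eqref{taux} are uniformly bounded on $\mathcal{M}_0$, the PES $Z$ has only finitely many jumps on any interval $[0,T]$; so if the one-step statement transfers via the strong Markov property to the post-invasion configuration, then induction on the number of jumps of $Z$ in $[0,T]$ gives the claimed finite-dimensional convergence.

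For the one-step analysis, start with $\nu^K_0$ close to $\sum_{i=1}^d \bar n_i(\mathbf{x})\delta_{x_i}$ on the $d$ coexisting traits. Before any new trait appears, $(\langle \nu^K_t,\mathbf{1}_{\{x_i\}}\rangle)_{1\le i\le d}$ is a density-dependent Markov chain whose fluid limit is the asymptotically stable system $LV(d,\mathbf{x})$. A Freidlin--Wentzell exit-time estimate for small random perturbations of dynamical systems yields an exponential-in-$K$ lower bound $\exp(VK)$ on the time needed for the densities to leave an $\eta$-neighborhood $\mathcal{N}_\eta$ of $\bar{\mathbf{n}}(\mathbf{x})$; combined with $1/(Ku_K)\ll \exp(VK)$ from \eqref{eq:u_K-K}, this shows that with probability tending to $1$ the first mutation occurs while the density vector is still in $\mathcal{N}_\eta$. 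On the mutation time scale, the first mutation time is thus approximately exponential with rate $\sum_j p(x_j)\lambda(x_j)\bar n_j(\mathbf{x})$, and the trait $y=x_j+h$ is selected with density proportional to $p(x_j)\lambda(x_j)\bar n_j(\mathbf{x})\,m(x_j,h)$, as prescribed by the generator \eqref{eq:gene-PES}.

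Next, for the chosen mutant $y$ I would follow the three-phase invasion scheme of Section~\ref{sec:idea-pf}. In Phase~1 (while $\langle \nu^K_t,\mathbf{1}_{\{y\}}\rangle \le \eta$ is small), couple $K\langle \nu^K_\cdot,\mathbf{1}_{\{y\}}\rangle$ above and below by linear branching processes whose birth and death rates are $\mathcal{O}(\eta)$-perturbations of $\lambda(y)$ and $\mu(y)+\sum_i\alpha(y,x_i)\bar n_i(\mathbf{x})$; standard branching-process estimates then give that the mutant reaches $\eta K$ with probability $[f(y;\mathbf{x})]_+/\lambda(y)+\mathcal{O}(\eta)$ within time $\mathcal{O}(\log K)$, and otherwise dies out in time $\mathcal{O}(\log K)$. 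In Phase~2, invoke the fluid-limit theorem recalled in Section~\ref{sec:heuristics} to compare the $(d+1)$-dimensional density vector with the solution of $LV(d+1,(x_1,\ldots,x_d,y))$; Assumption~(B1) guarantees that this solution, starting in the $\eta$-neighborhood of $(\bar{\mathbf{n}}(\mathbf{x}),0)$, enters the $\eta$-neighborhood of $\mathbf{n}^*$ within a deterministic time $T_\eta<\infty$. In Phase~3, for each $i\notin I(\mathbf{n}^*)$, Assumption~(B2) yields $f(x_i;\mathbf{x}^*)<0$, so an upper coupling with a subcritical branching process drives $\langle\nu^K,\mathbf{1}_{\{x_i\}}\rangle$ to $0$ in time $\mathcal{O}(\log K)$; meanwhile, by uniqueness of the non-trivial equilibrium in Definition~\ref{def:coex} applied to $\mathbf{x}^*$, the surviving coordinates stabilise in $\mathcal{N}_\eta$ around $\bar{\mathbf{n}}(\mathbf{x}^*)=(n^*_i)_{i\in I(\mathbf{n}^*)}$.

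Summing these contributions gives a total invasion duration of $\mathcal{O}(\log K)$, which under $\log K \ll 1/(Ku_K)$ is infinitesimal on the mutation time scale, so the probability that a second mutation interferes with an ongoing invasion vanishes as $K\to\infty$. Consequently the rescaled process jumps, at the correct exponential time and with the correct mixture law, from $\sum_{i=1}^d\bar n_i(\mathbf{x})\delta_{x_i}$ to $\sum_{i=1}^d n^*_i\delta_{x_i}+n^*_{d+1}\delta_y$, exactly matching~\eqref{eq:gene-PES}. The main technical obstacle is the glueing between the branching-process regime of Phase~1 and the ODE regime of Phase~2 over a time window of length $T_\eta$ that does not shrink with $\eta$: one must propagate the $\eta$-closeness through $[t_1,t_1+T_\eta]$ uniformly in $\eta$, which forces a careful order of quantifiers ($\eta$ chosen after $T_\eta$, with uniform fluid-deviation estimates on $[0,T_\eta]$). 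A secondary subtlety is that (B1)--(B2) only hold for Lebesgue-almost every $y$; this is harmless because $m(x_j,\cdot)$ is absolutely continuous, so the set of ``bad'' mutant traits carries zero mutation probability.
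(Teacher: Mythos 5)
Your proposal follows essentially the same route as the paper's proof in Appendix~A: a reduction to a single mutation--invasion step, Freidlin--Wentzell exit estimates to keep the resident densities near $\bar{\mathbf{n}}(\mathbf{x})$ until the (asymptotically exponential) first mutation, and the three-phase invasion analysis combining branching-process couplings (Phases~1 and~3, using~(B2) for sub-criticality) with the fluid limit to $LV(d+1,(x_1,\ldots,x_d,y))$ and Assumption~(B1) in Phase~2, all on a time window of order $\log K$ that is negligible under~\eqref{eq:u_K-K}. The subtleties you flag (the order of quantifiers when gluing the branching and ODE regimes, and the Lebesgue-null set of bad mutants being harmless by absolute continuity of $m(x_j,h)dh$) are precisely the points handled in the paper's adaptation of the argument of Champagnat's monomorphic case.
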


\noindent The proof of this result follows closely the heuristic
argument of Section~\ref{sec:idea-pf} and is very similar to the
proof of Theorem~1 of~\cite{champagnat-06}, that states a similar
result in the case where no pair of traits can coexist. We detail
in Appendix~\ref{sec:pf-PES} all the steps and results
of~\cite{champagnat-06} that are modified in order to prove
Theorem~\ref{thm:PES-fdd}.

\section{Particular cases and extensions of the PES}
\label{sec:CP}

In this section, we discuss various situations where
Assumptions~(B1) and~(B2) are satisfied allowing one to explicitly
obtain the PES.

\subsection{The "no coexistence" case: an extension of the trait substitution sequence (TSS)}
\label{sec:TSS}

In this section we characterize the case where the PES is well
defined until the first co-existence time of two different traits.
Assumption (B) with $d=1$  (only one resident trait) involves the
fitness function defined in \eqref{eq:fitness-d=1}.

\begin{prop} \label{prop:D1->B} Let us assume the hypothesis \begin{description}
\item[\textmd{(C1)}] For  all $x\in{\cal X}$, the set of $y$ such
that $f(y;x)=0$ has Lebesgue measure 0.
\end{description}
Then \textup{(B)} is satisfied for $d=1$.
\end{prop}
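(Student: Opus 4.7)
The strategy is to reduce (B) for $d=1$ to the phase portrait of the two--dimensional competitive Lotka--Volterra system $LV(2,(x,y))$ written out in~(\ref{eq:LV-d2}), using (C1) to remove the non--generic configurations where an invasion fitness vanishes. The resident trait $x$ coexists monomorphically since (A2) yields $\bar{n}(x)=r(x)/\alpha(x,x)>0$, globally attracting for~(\ref{eq:LV-d1}). Pick a mutant $y$ with $f(y;x)>0$; by (C1), this strict inequality holds outside a Lebesgue--null set of $y$. What remains is to analyze $LV(2,(x,y))$ from a small perturbation of $(\bar{n}(x),0)$, a problem entirely encoded in the signs of the two invasion fitnesses $f(y;x)$ and $f(x;y)=r(x)-\alpha(x,y)\bar{n}(y)$.

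Invoke the classical classification of planar competitive Lotka--Volterra systems (already recalled just after~(\ref{eq:LV-d2})): under $r(x),r(y)>0$ and assuming both fitnesses are nonzero, only two regimes are compatible with $f(y;x)>0$. In the coexistence regime ($f(y;x),f(x;y)>0$) the interior equilibrium $\bar{\mathbf{n}}(x,y)\in(\mathbb{R}_+^*)^2$ exists, is globally asymptotically stable there, and both semi--trivial equilibria are saddles (absence of interior limit cycles follows from Bendixson--Dulac with Dulac factor $1/(n_xn_y)$). In the exclusion regime ($f(y;x)>0$, $f(x;y)<0$) no interior equilibrium exists, $(0,\bar{n}(y))$ globally attracts $(\mathbb{R}_+^*)^2$, and $(\bar{n}(x),0)$ is a saddle whose stable manifold is the $x$--axis. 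In either case the unstable manifold of the saddle $(\bar{n}(x),0)$ enters the positive quadrant and accumulates on the unique interior attractor of the regime; by the stable--manifold theorem, a small enough neighbourhood $\mathcal{U}$ of $(\bar{n}(x),0)$ intersected with $(\mathbb{R}_+^*)^2$ lies in that basin. Setting $\mathbf{n}^*(x,y):=\bar{\mathbf{n}}(x,y)$ in the coexistence regime and $\mathbf{n}^*(x,y):=(0,\bar{n}(y))$ in the exclusion regime verifies (B1). For (B2), the coexistence regime gives $I(\mathbf{n}^*)=\{1,2\}$ and $\mathbf{x}^*=(x,y)$ coexist, so the condition on $j\notin I(\mathbf{n}^*)$ is vacuous; the exclusion regime gives $I(\mathbf{n}^*)=\{2\}$, $\mathbf{x}^*=(y)$ trivially coexists by (A2), and the only thing left to check is $f(x;y)<0$, which defines the regime.

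The main technical obstacle is discarding both degenerate configurations where an invasion fitness is zero (so that a semi--trivial equilibrium becomes non--hyperbolic and the two regimes above touch). The set $\{y:f(y;x)=0\}$ is directly excluded by (C1). The symmetric set $\{y:f(x;y)=0\}$ is not verbatim controlled by (C1), but can be handled by applying (C1) with the roles of resident and mutant swapped (so that, for every $y$, the set $\{x:f(x;y)=0\}$ is null) and then invoking Fubini on $\{(x,y)\in\mathcal{X}^2:f(x;y)=0\}$, which yields that for Lebesgue--almost every resident $x$ the section $\{y:f(x;y)=0\}$ is also null. Beyond this measure--theoretic subtlety, everything else is a direct application of standard planar Lotka--Volterra theory.
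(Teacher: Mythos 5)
Your proof follows essentially the same route as the paper's: both rest entirely on the classical sign classification of planar competitive Lotka--Volterra systems recalled just after~(\ref{eq:LV-d2}), and your case-by-case verification of (B1) and (B2) in the coexistence and exclusion regimes only makes explicit what the paper leaves implicit. The one point where you genuinely depart is the treatment of the degenerate set $\{y: f(x;y)=0\}$, and there your patch does not quite deliver what is needed: applying Fubini to $\{(x,y)\in{\cal X}^2:f(x;y)=0\}$ only yields that the section $\{y:f(x;y)=0\}$ is Lebesgue-null for \emph{almost every} resident $x$, whereas (B1) is quantified over \emph{every} coexisting resident, so an exceptional null set of residents is not permitted. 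The paper's own proof silently uses that both $f(y;x)\not=0$ and $f(x;y)\not=0$ for almost every mutant $y$, which (C1) as literally stated does not provide; since the analogous hypothesis (C2) explicitly lists both $f(x;z)=0$ and $f(z;x)=0$, the intended reading of (C1) is clearly the symmetric one, and the clean fix is to read (or restate) (C1) as also requiring $\{y:f(x;y)=0\}$ to be null for every $x$, rather than to argue via Fubini. Apart from this quantifier issue (and the cosmetic point that in the exclusion regime the attractor $(0,\bar{n}(y))$ is a boundary, not interior, equilibrium), your argument is correct and complete.
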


\begin{proof}The assumption (B) for $d=1$ involves 2-dimensional
competitive Lotka-Volterra systems. Their asymptotic behavior  is
well-known (see e.g.~\cite{istas-05}). In particular,
\begin{itemize}
\item if $f(x;y)>0$ and $f(y;x)<0$, any solution of $LV(2,(x,y))$
  starting from $\RR_+\times\RR_+^*$ converges to $(\bar{n}(x),0)$,
\item if $f(x;y)<0$ and $f(y;x)>0$, any solution of $LV(2,(x,y))$
  starting from $\RR_+^*\times\RR_+$ converges to $(0,\bar{n}(y))$,
\item if $f(x;y)>0$ and $f(y;x)>0$, any solution of $LV(2,(x,y))$
  starting from $(\RR_+^*)^2$ converges to $\bar{\mathbf{n}}(x,y)$,
\item if $f(x;y)<0$ and $f(y;x)<0$, $(\bar{n}(x),0)$ and
  $(0,\bar{n}(y))$ are both locally strongly stable.
\end{itemize}
Moreover, all the equilibria are hyperbolic if and only if
$f(y;x)\not=0$ and $f(x;y)\not=0$. Therefore, Assumption (C1)
implies Assumption~(B) for $d=1$ since $m(x,h)dh$ is absolutely
continuous w.r.t.\ Lebesgue's measure.
\end{proof}

\noindent Let us now introduce the following killed  PES
$({Z_t^{(1)}},t\geq 0)$ as a Markov jump process on ${\cal
M}_0\cup \{\partial\}$, where  $\partial$ is a cemetery state,
with infinitesimal generator ${\cal L}^{(1)}$ defined as follows.
Let $\nu:=\bar{n}(x)\delta_{x}$, then
\begin{multline}
  {\cal L}^{(1)}\varphi(\bar{n}(x)\delta_{x}) \\=\int_{{\cal X}}\Big(
  \varphi\Big(\bar{n}(x+h)\delta_{x+h}\Big)
  -\varphi(\bar{n}(x)\delta_{x}) \Big)
  p(x)\lambda(x)\bar{n}(x) \frac{[f(x+h;x)]_+}
  {\lambda(x+h)}\mathbf{1}_{\{f(x;x+h)<0\}}m(x,h)dh \\
   +\int_{{\cal X}}\big(\varphi(\partial)
  -\varphi(\bar{n}(x)\delta_{x})\big) p(x)\lambda(x)\bar{n}(x)
  \mathbf{1}_{\{f(x;x+h)>0, f(x+h;x)>0\}}m(x,h)dh. \label{eq:1gene-mod-PES}
\end{multline}
\noindent By construction, the killed PES $({Z_t^{(1)}},t\geq 0)$
is always monomorphic before killing. Once the killed PES reaches
the cemetery state $\partial$, it no longer jumps.

\noindent This modification amounts to construct the killed PES as the
PES, and send it to the cemetery state $\partial$ as soon as a mutant
trait $y$ appears in a monomorphic population of trait $x\in{\cal X}$
such that $x$ and $y$ coexist. Note that $\partial$ is reached as soon
as a mutant \emph{appears}, that could coexist with the resident
trait, even if this mutant actually does not \emph{invade} the
population. That explains why the invasion probability
$[f(y;x)]_+/\lambda(y)$ does not appear in the last line
of~(\ref{eq:1gene-mod-PES}).

\noindent The following proposition is a consequence of the
previous discussion.
\begin{prop}
  \label{prop:mod-PES-1}
  Under Assumptions \textup{(A)} and \textup{(C1)}, the killed PES
  $({Z_t^{(1)}},t\geq 0)$ is almost surely well-defined and belongs
  almost surely to ${\cal M}_0\cup\{\partial\}$ for all time.
\end{prop}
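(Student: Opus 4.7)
The plan is to treat $(Z^{(1)}_t)$ as a standard pure-jump Markov process and to verify two things: the total jump rate out of any non-cemetery state is uniformly bounded, and each admissible jump lands back in ${\cal M}_0 \cup \{\partial\}$. Existence, non-explosion, and the state-space constraint then follow from these two facts via a Poisson-point construction.

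For the rate bound, reading off ${\cal L}^{(1)}$ in~(\ref{eq:1gene-mod-PES}), the total jump rate from $\bar{n}(x)\delta_x$ is the sum of two integrals against $m(x,h)dh$, with integrands bounded by $p(x)\lambda(x)\bar{n}(x)$ times either $[f(x+h;x)]_+/\lambda(x+h)$ or $1$. Using $f(y;x)=\lambda(y)-\mu(y)-\alpha(y,x)\bar{n}(x)\le\lambda(y)$, together with (A1)--(A2) (which give $\bar{n}(x)\le\bar\lambda/\underline\alpha$ and $p(x)\lambda(x)\le\bar\lambda$) and $\int m(x,h)dh=1$, both integrals are bounded uniformly in $x\in{\cal X}$ by a constant $C=C(\bar\lambda,\underline\alpha)$. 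Standard thinning of a homogeneous rate-$2C$ Poisson point process then produces a c\`adl\`ag Markov process on $\RR_+$ with generator ${\cal L}^{(1)}$ whose number of jumps on any bounded interval is a.s.\ finite, so no explosion can occur.

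For the state-space check, $\partial$ is absorbing by construction, so the only other possible transition is $\bar{n}(x)\delta_x \to \bar{n}(x+h)\delta_{x+h}$, and the support condition on $m(x,\cdot)$ gives $x+h\in{\cal X}$. I then need $\{x+h\}$ to coexist in the sense of Definition~\ref{def:coex} with $d=1$, which is a one-line check: under (A2), $\bar{n}(x+h) = r(x+h)/\alpha(x+h,x+h)$ is a positive equilibrium of $LV(1,x+h)$, and the scalar Jacobian $-\alpha(x+h,x+h)\bar{n}(x+h)$ is strictly negative, giving local strong stability. Finally, (C1) combined with the absolute continuity of $m(x,h)dh$ ensures that the indicators $\indiq_{\{f(x;x+h)<0\}}$ and $\indiq_{\{f(x;x+h)>0,\,f(x+h;x)>0\}}$ partition the admissible mutations up to an $m(x,\cdot)$-null set, so no ambiguity remains in the transition kernel.

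The only mild obstacle is this monomorphic coexistence verification for every mutant, but (A2) trivialises it; bounded rates and the Poisson construction make non-explosion automatic, so the rest is routine bookkeeping.
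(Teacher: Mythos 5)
Your proposal is correct and follows essentially the same route as the paper, which disposes of this proposition (and its unkilled analogue) by exactly the two observations you make: the jump rates in the generator are uniformly bounded under (A1)--(A2) (so the pure-jump process is non-explosive), and every reachable state is either $\partial$ or a monomorphic state $\bar{n}(y)\delta_y$, which lies in ${\cal M}_0$ because $r(y)>0$ makes $\bar n(y)=r(y)/\alpha(y,y)$ a strongly stable equilibrium of $LV(1,y)$, with (C1) handling the $m(x,\cdot)$-null boundary cases $f(x+h;x)=0$ or $f(x;x+h)=0$. The paper states this only as ``a consequence of the previous discussion,'' so your write-up is simply a fleshed-out version of the intended argument.
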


\noindent  The proof of the following result can be easily adapted
 from that of Theorem~\ref{thm:PES-fdd}.

\begin{cor}
  \label{cor:double}
  With the same assumption and notation as in
  Theorem~\textup{\ref{thm:PES-fdd}}, except that
  Assumption~\textup{(B)} is replaced by Assumption~\textup{(C1)} and
  that $d=1$, let
  $$
  \tau_{K}:=\inf\{t\geq
  0:\mbox{Supp}(\nu_t^K)=\{x,y\}\mbox{\ such that\ }(x,y) \hbox{ coexist}\}.
  $$
  Then the process
  $$
  \Big(\nu^K_{\frac{t}{K
      u_K}}\mathbf{1}_{\{\frac{t}{K
      u_K}\leq\tau_{K}\}}+\partial\,\mathbf{1}_{\{\frac{t}{K
      u_K} > \tau_{K}\}},t\geq 0\Big)
  $$
  converges as $K\rightarrow+\infty$ to the killed PES
  $({Z_t^{(1)}},t\geq 0)$ with initial condition
  $Z^{(1)}_0=\bar{n}(x)\delta_{x}$. The
  convergence is understood in the same sense as in
  Theorem~\textup{\ref{thm:PES-fdd}}.
\end{cor}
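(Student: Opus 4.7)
The plan is to mirror the proof of Theorem~\ref{thm:PES-fdd} (detailed in Appendix~A), noting that the stopping at $\tau_K$ means we only ever need to control monomorphic phases, which substantially simplifies matters. By Proposition~\ref{prop:D1->B}, assumption (C1) implies (B) for $d=1$, so all the exit-time and Lotka--Volterra estimates used in the proof of Theorem~\ref{thm:PES-fdd} remain available. The strategy is to iterate the three-phase decomposition of Section~\ref{sec:idea-pf} starting from $\nu^K_0 \approx \bar n(x)\delta_x$, stopping the iteration at the first mutation whose trait would coexist with the current resident.

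More precisely, I would first use a Freidlin--Wentzell exit-time argument to show that, between two mutations, the rescaled density $\langle \nu^K_t, \mathbf 1_{\{x\}}\rangle$ stays in an $\eta$-neighborhood of $\bar n(x)$ for a time at least $\exp(VK)$ with high probability, which under (\ref{eq:u_K-K}) dominates $1/(Ku_K)$. Conditionally on no mutation yet, the next mutation thus occurs at rate asymptotically $p(x)\lambda(x)\bar n(x)$ on the time scale $t/(Ku_K)$, and the mutant trait $y=x+h$ has density $m(x,h)$. Using the classification of $LV(2,(x,y))$ recalled in the proof of Proposition~\ref{prop:D1->B} together with (C1) (which discards the null set where $f(\cdot;\cdot)=0$), one splits on the arrival of a mutant into three cases: (a) if $f(y;x)<0$, the mutant process is dominated by a subcritical linear birth--death process and goes extinct in $O(\log K)$ time without changing the state; (b) if $f(y;x)>0$ and $f(x;y)<0$, standard branching-process comparison gives survival probability $[f(y;x)]_+/\lambda(y)$, after which phase~2 drives the $LV(2,(x,y))$ trajectory to $(0,\bar n(y))$ and phase~3 eliminates the residual trait $x$, the whole thing in time $O(\log K)$, producing a jump to $\bar n(y)\delta_y$; (c) if $f(y;x)>0$ and $f(x;y)>0$, then $(x,y)$ coexist, so $\mathrm{Supp}(\nu^K)=\{x,y\}$ already qualifies as $\tau_K$, and the process is sent to $\partial$. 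Case~(a) contributes nothing to the generator, case~(b) matches exactly the first line of (\ref{eq:1gene-mod-PES}), and case~(c) matches the second line, where the absence of the invasion probability factor is precisely explained by the fact that $\tau_K$ is triggered by the \emph{appearance} of a coexisting mutant, not by its fixation.

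Finally, because the invasion and stabilization phases take time $O(\log K) \ll 1/(Ku_K)$, successive mutation events are well-separated on the mutation time scale, so iterating (a)--(c) a bounded number of times yields convergence of finite-dimensional distributions to the Markov jump process with generator $\mathcal L^{(1)}$. After $\tau_K$, both the prelimit and limit processes are identically $\partial$, so the killed trajectories coincide in the limit. The main obstacle, as in~\cite{champagnat-06}, is the quantitative control of phases 1 and 3 uniformly in the mutant trait $h$: one needs couplings with linear birth--death processes whose parameters remain bounded away from criticality, which is exactly what (C1) guarantees by excluding the zero set of $h\mapsto f(x+h;x)$ and $h\mapsto f(x;x+h)$. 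Everything else is a direct translation of the estimates in Appendix~\ref{sec:pf-PES}, restricted to the $d=1$ case.
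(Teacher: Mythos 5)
Your proposal is correct and follows exactly the route the paper intends: the paper's own ``proof'' is the one-line remark that the result is adapted from Theorem~\ref{thm:PES-fdd}, and your argument spells out precisely that adaptation --- invoking Proposition~\ref{prop:D1->B} to recover Assumption~(B) for $d=1$, re-running the three-phase machinery of Appendix~\ref{sec:pf-PES} in the monomorphic case, and observing that the coexistence case triggers $\tau_K$ at the \emph{appearance} of the mutant, which accounts for the missing invasion-probability factor in the second line of~(\ref{eq:1gene-mod-PES}). Nothing essential is missing.
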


\begin{rem} \label{rem-tss} The killed PES generalizes the so-called
  ``Trait Substitution Sequence'' (TSS), introduced
  in~\textup{\cite{metz-geritz-al-96}}, and rigorously studied
  in~\textup{\cite{champagnat-06}}.  This TSS is obtained when the
  parameters of the microscopic model prevent the coexistence of two
  traits. Such an assumption, known as ``Invasion-Implies-Fixation''
  \textup{(IIF)} principle~\textup{\cite{geritz-gyllenberg-al-02}} is
  given by:
  \begin{description}
  \item[\textmd{(IIF)}] for all $x\in{\cal X}$, almost all
    $y\in{\cal X}$ such that $f(y;x)>0$ satisfy $f(x;y)<0$.
  \end{description}
  Hence, the TSS $Z$  has on $\mathbb{R}_+$ the form
  $$
  Z_t=\bar{n}(X_t)\delta_{X_t},\quad t\geq 0,
  $$
  where $X$ is a Markov pure jump process on ${\cal X}$ with
  infinitesimal generator
  \begin{equation}
    \label{eq:gen-TSS}
    L\varphi(x)=\int_{\mathbb{R}^l}(\varphi(x+h)-\varphi(x))p(x)\lambda(x)
    \bar{n}(x)\frac{[f(x+h;x)]_+}{\lambda(x+h)}m(x,h)dh.
  \end{equation}
\end{rem}
\medskip

\noindent The killed PES $(Z_t^{(1)},t\geq 0)$ prevents the
coexistence of two or more traits. Therefore, this process is not
suited to our study of evolutionary branching in
Section~\ref{sec:br}. To this end, we need to examine a more
general situation.

\subsection{The ``no triple coexistence'' case}
\label{sec:triple}

In this section we characterize the case where the PES is well
defined until the first coexistence time of three different
traits.

\noindent In the case $d=2$ the fitness function
\eqref{eq:fitness} of a mutant trait $z$ in a population with two
coexisting resident traits $x$ and $y$ is given by
\begin{equation}
  \label{eq:def-f3}
  f(z;x,y)=r(z)-\alpha(z,x)\bar{n}_1(x,y)-\alpha(z,y)\bar{n}_2(x,y)
\end{equation}
with
\begin{align}
  \bar{n}_1(x,y) & =\frac{r(x)\alpha(y,y)-r(y)\alpha(x,y)}
  {\alpha(x,x)\alpha(y,y)-\alpha(x,y)\alpha(y,x)},
  \label{n1comp} \\
  \bar{n}_2(x,y) & =\frac{r(y)\alpha(x,x)-r(x)\alpha(y,x)}
  {\alpha(x,x)\alpha(y,y)-\alpha(x,y)\alpha(y,x)}. \label{n2comp}
\end{align}
We need to extend this definition  to any $x,y\in{\cal X}$ such
that $f(x;y)f(y;x)>0$ (and not only for the ones that coexist). It
can be easily checked that
$\alpha(x,x)\alpha(y,y)-\alpha(x,y)\alpha(y,x)$ cannot be 0 under
this condition.

 \medskip \noindent We can now introduce the following assumption :
\begin{description}
 \item[\textmd{(C2)}] For all $x,y\in{\cal X}$
  that coexist, the set of $z$ such that $f(x;z)=0$, $f(z;x)=0$,
  $f(y;z)=0$, $f(z;y)=0$, $f(x;y,z)=0$ or $f(y;x,z)=0$ (when these
  last quantities are defined) has Lebesgue measure 0.
\end{description}

\begin{prop}  
  \label{prop:C2->B} 
  There exists a set $C_{coex}$ (defined in \eqref{eq:def-Cn}) such
  that Assumption \textup{(C2)} implies \textup{(B)} for $d=2$ and for
  all $(x,y,z)\in{\cal X}^3\setminus C_{coex}$.
\end{prop}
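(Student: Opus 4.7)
The plan is to enumerate the fixed points of $LV(3,(x,y,z))$, establish hyperbolicity using (C2), and then invoke Zeeman's classification~\cite{zeeman-93} of three-dimensional competitive Lotka--Volterra systems to control the global dynamics. The candidate equilibria in $\mathbb{R}_+^3$ are the origin, the three monomorphic equilibria $\bar{n}(x)e_1$, $\bar{n}(y)e_2$, $\bar{n}(z)e_3$, the dimorphic equilibria on the coordinate planes (one for each coexisting pair among $\{x,y\},\{x,z\},\{y,z\}$), and at most one interior equilibrium $\mathbf{n}^I\in(\mathbb{R}_+^*)^3$. At each boundary equilibrium, the Jacobian admits the block form exploited in the proof of Proposition~\ref{prop:fitness}: its eigenvalues along the face are inherited from the lower-dimensional coexistence Jacobian, while its eigenvalues transverse to the face are precisely the invasion fitness values $f(z;x,y)$, $f(y;x,z)$, $f(x;y,z)$, $f(z;x)$, $f(x;z)$, $f(z;y)$, $f(y;z)$, $f(x;y)$ and $f(y;x)$. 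Assumption~(C2) forces each of these to be nonzero, so every boundary equilibrium is hyperbolic; I include in $C_{coex}$ the triples for which $x,y$ fail to coexist, some fitness vanishes, or the interior equilibrium is non-hyperbolic.

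Next I use Zeeman's carrying simplex theorem: all non-trivial orbits of $LV(3,(x,y,z))$ are attracted to a $C^1$ invariant 2-simplex $\Sigma$, and Poincar\'e--Bendixson on $\Sigma$ leaves equilibria, periodic orbits and heteroclinic cycles as the only possible $\omega$-limits. Periodic orbits are ruled out by Hirsch's theorem once all equilibria on $\Sigma$ are hyperbolic sources/sinks/saddles, except in exceptional Zeeman classes (notably the class admitting the May--Leonard heteroclinic cycle), which are characterized by explicit sign conditions on the fitnesses listed in (C2); these exceptional configurations are added to $C_{coex}$. For (B1) I take $\mathcal{U}$ a small neighborhood of $(\bar{n}_1(x,y),\bar{n}_2(x,y),0)$: since $f(z;x,y)>0$, this equilibrium is a saddle whose one-dimensional unstable manifold $W^u$ enters $\{n_3>0\}\cap\Sigma$. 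By the stable manifold theorem its stable manifold has codimension one, so every orbit starting in $\mathcal{U}\cap(\mathbb{R}_+^*)^3$ shadows $W^u$ and hence has the same $\omega$-limit equilibrium $\mathbf{n}^*(x,y,z)$ on $\Sigma$.

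For (B2), write $I(\mathbf{n}^*)$ for its positive coordinates and $\mathbf{x}^*$ for the corresponding trait vector. Reordering coordinates, the Jacobian $DF^{(x,y,z)}(\mathbf{n}^*)$ splits as in Proposition~\ref{prop:equiv-hyp} into a block $DF^{\mathbf{x}^*}(\bar{\mathbf{n}}(\mathbf{x}^*))$ in the coordinates $I(\mathbf{n}^*)$ and diagonal entries $f(x_j;\mathbf{x}^*)$ for $j\notin I(\mathbf{n}^*)$. Since $\mathbf{n}^*$ attracts an open set of initial conditions in $(\mathbb{R}_+^*)^3$, applying Proposition~\ref{prop:equiv-hyp} (the implication $(\mathrm{B3})\Rightarrow(\mathrm{B2})$ via the stable manifold theorem) gives that all eigenvalues of $DF^{(x,y,z)}(\mathbf{n}^*)$ have negative real part; hence the $\mathbf{x}^*$-block certifies coexistence of $\{x_i: i\in I(\mathbf{n}^*)\}$ and each $f(x_j;\mathbf{x}^*)<0$, which is (B2).

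The main obstacle is the second step: identifying, purely in terms of the sign pattern of the quantities tested in (C2), the exceptional Zeeman classes in which a limit cycle or a heteroclinic attractor persists. I expect this to reduce to checking that the bifurcation thresholds (Hopf, heteroclinic connection) are given by algebraic relations among the $r(\cdot)$'s and $\alpha(\cdot,\cdot)$'s that translate into vanishing of some fitness in the list of (C2); any remaining genuinely codimension-one exceptional locus is absorbed into $C_{coex}$. The other delicate point is to make sure that, in the final step, the trajectory captured by $W^u$ cannot escape to infinity; this is handled by the standard dissipativity argument for competitive Lotka--Volterra systems ensured by Assumption~(A2).
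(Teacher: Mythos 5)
Your overall architecture (carrying simplex, Zeeman's classification, fitness values as transverse eigenvalues at boundary equilibria, hyperbolicity from (C2), and the block-Jacobian argument for (B2)) matches the paper's, but two steps would fail as written. First, your argument for (B1) --- that every orbit starting in ${\cal U}\cap(\RR_+^*)^3$ ``shadows'' the one-dimensional unstable manifold $W^u$ of the saddle $(\bar{n}_1(x,y),\bar{n}_2(x,y),0)$ and therefore inherits its $\omega$-limit --- is not valid. Shadowing $W^u$ for a finite time says nothing about the $\omega$-limit unless the limit point of $W^u$ is asymptotically stable; the configuration of Fig.~\ref{fig:eq}, which Assumption~(B1) is precisely designed to exclude, is one where $W^u$ converges to an unstable $\mathbf{n}^*$ while nearby orbits go elsewhere. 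The paper instead inspects the ten Zeeman phase portraits compatible with the pattern of Fig.~\ref{fig:pattern} (classes 7--12, 26, 29, 31 and 33 of Fig.~\ref{fig:10}) and observes that in classes 7--12 the portrait itself forces all orbits issued from ${\cal U}$ to converge to a single equilibrium, while the remaining four classes are discarded wholesale.

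Second, your plan for the ``exceptional classes'' rests on a false expectation. Within classes 26, 29, 31 and 33, the presence of a stable limit cycle (or of the bad heteroclinic connection in class 26) is an \emph{open} condition on the coefficients and is \emph{not} detected by the signs, nor by the vanishing, of any of the quantities listed in (C2); the paper states explicitly that the fitness signs are not sufficient to determine the behavior near the interior equilibrium, and hyperbolicity of all equilibria does not exclude periodic orbits on $\Sigma$. Consequently the exceptional locus cannot be reduced to a codimension-one set absorbed by the Lebesgue-null exclusions of (C2). The paper's resolution changes the meaning of $C_{coex}$: it is defined in~(\ref{eq:def-Cn}) as the \emph{entire} union of classes 26, 29, 31 and 33 --- a set characterized by the fitness sign pattern (properties (P1)--(P2)) but by no means null --- and the proposition only asserts (B) for triples outside it, the killed PES $Z^{(2)}$ being sent to the cemetery as soon as such a triple appears. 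Your treatment of (B2) via the block decomposition and Proposition~\ref{prop:equiv-hyp} is sound once (B1) and the hyperbolicity supplied by (C2) are in place.
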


\begin{proof}
As in the previous section, we have to distinguish coexistence
 and non coexistence of three traits. To this aim  we need to
 introduce the  classification of the asymptotic behavior of 3-dimensional
 competitive Lotka-Volterra systems
 done by  Zeeman~\cite{zeeman-93}. Any  $3$-dimensional competitive Lotka-Volterra system
admits an invariant hypersurface $\Sigma$ called carrying simplex,
such that any non-zero solution of the system is asymptotic as
$t\rightarrow+\infty$ to one in $\Sigma$ (cf. \cite{hirsch-88}).
$\Sigma$ is a Lipschitz submanifold of $\RR^3_+$ homeomorphic to
the unit simplex in $\RR^3_+$ by radial projection. Moreover,
$\Sigma$ is a global attractor for the dynamics in
$\RR^3_+\setminus\{0\}$ (\cite[Thm.3]{hirsch-08}). In particular,
one can deduce from the asymptotic behavior of trajectories on
$\Sigma$ the asymptotic behavior of trajectories starting in a
neighborhood of $\Sigma$.

\noindent Zeeman obtained a full classification of the topological
equivalence classes of the 3-dimensional competitive Lotka-Volterra
systems by determining the 33 topological equivalence classes of those
systems restricted on their carrying simplex. (In an equivalence
class, the trajectories of the systems are related by a homeomorphism
of $\RR^3_+$). For a given system $LV(3,(x,y,z))$, the equivalence
class to which it belongs is determined by the sign of the
2-dimensional fitnesses $f(x;y),\ f(y;x),\ f(x;z),\ f(z;x),\ f(y;z),\
f(z;y)$ and of the 3-dimensional fitnesses $f(x;y,z),\ f(y;x,z),\
f(z;x,y)$ when they are defined. The equivalence classes
of~\cite{zeeman-93} are characterized by drawing on the unit simplex
of $\RR^3_+$ the fixed points and the limit cycles of the system,
joined by their stable and unstable manifolds\footnote{The stable
  manifold of an equilibrium is the set of starting points of the
  Lotka-Volterra system such that the solution converges to this
  equilibrium. The unstable manifold is defined in the same way, but
  for the time-reversed system.}.

\begin{figure}[h]
  \begin{center}
    \psfrag{X}{$x$}
    \psfrag{Y}{$y$}
    \includegraphics[width=8cm]{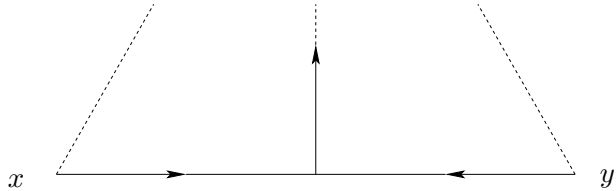}
  \end{center}
  \caption{{\small The pattern on the carrying simplex that corresponds to the
    situation of Assumption~(B). Traits $x$ and $y$ are the resident
    traits.}}
  \label{fig:pattern}
\end{figure}
\noindent The signs of the fitnesses correspond to the arrows in
each diagram. For example, $f(y;x)>0$ means that, on the edge of
the simplex that reach $x$ and $y$, there is an arrow starting
from $x$ in the direction of $y$. In other words, the unstable
manifold of $(\bar{n}(x),0,0)$ contains (a part of) the edge of
the simplex that reach $x$ to $y$. Similarly, $f(z;x,y)>0$ means
that $f(x;y)f(y;x)>0$, i.e.\ that $\ LV(3,(x,y,z))$ has as fixed
point $(\bar{n}_1(x,y),\bar{n}_2(x,y),0)$, represented as the
midpoint of the edge of the simplex linking $x$ and $y$, and that
this fixed point has an unstable manifold pointing in the
direction of the interior of the simplex. The situation
represented in Fig.~\ref{fig:pattern} corresponds to this case, when
$x$ and $y$ coexist.

\noindent In order to check if Assumption~(B) holds, we only need to
restrict to the equivalence classes in which two traits coexist (the
resident traits, say $x$ and $y$), and the third (mutant) trait (say
$z$) satisfy $f(z;x,y)>0$. This situation corresponds to the cases
where the carrying simplex has one side containing the pattern of
Fig.~\ref{fig:pattern}. Among the 33 equivalence classes
of~\cite{zeeman-93}, there are only 10 of them that satisfy this
requirement, shown in Fig~\ref{fig:10}. We label them with the same
numbers as in~\cite{zeeman-93}.  In Fig.~\ref{fig:10}, the figures
obtained by exchanging $x$ and $y$ belong to the same equivalence
class.  An attracting fixed point of $LV(3,(x,y,z))$ is represented by
a closed dot $\bullet$, a repulsive fixed point by an empty dot
$\circ$, a saddle point by the intersection of its stable and unstable
manifolds. When the interior fixed point (the non-trivial equilibrium)
is not a saddle point, it can be either stable or unstable. Depending
on cases, this equilibrium can also be surrounded by one or several
stable or unstable cycles. In particular, the sign of the fitnesses is
not sufficient to determine the precise asymptotic behavior of the
system near the interior equilibrium. The undetermined type of these
equilibria is represented in Fig.~\ref{fig:10} by the symbol $\odot$.

\begin{figure}[h]
  \begin{center}
    \psfrag{7}{7}
    \psfrag{8}{8}
    \psfrag{9}{9}
    \psfrag{10}{10}
    \psfrag{11}{11}
    \psfrag{12}{12}
    \psfrag{26}{26}
    \psfrag{29}{29}
    \psfrag{31}{31}
    \psfrag{33}{33}
    \includegraphics[width=12cm]{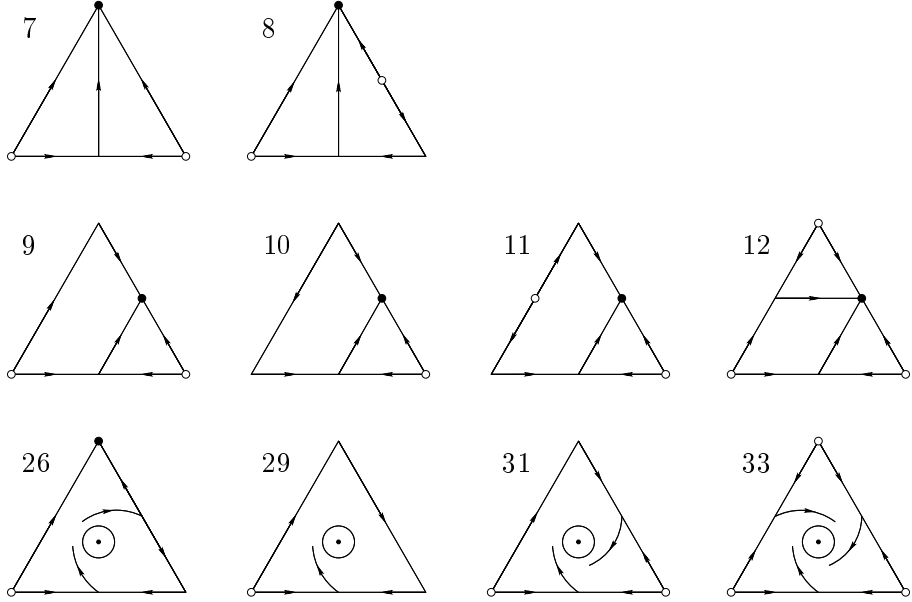}
  \end{center}
  \caption{{\small The phase portrait on $\Sigma$ for the 3-dimensional competitive
   Lotka-Volterra
    systems corresponding to the situation described in
    Assumption~(B).}}
  \label{fig:10}
\end{figure}
\noindent Since no pattern as in Fig.~\ref{fig:eq} occurs in
diagrams 7 to 12 in Fig.~\ref{fig:10}, we see that Assumption~(B1)
is always satisfied except possibly in the cases of diagrams 26,
29, 31 and 33. It could be violated either if the interior
equilibrium is surrounded by a stable cycle, or in diagram~26 in
the case where the unstable manifold of the equilibrium
$(\bar{n}_1(x,y),\bar{n}_2(x,y),0)$ (midpoint of the lower edge of
the simplex) admits the equilibrium
$(0,\bar{n}_1(y,z),\bar{n}_2(y,z))$ as limit point. Moreover, as
before, all the steady states are hyperbolic if all the 2- and
3-dimensional fitnesses are nonzero.

\noindent Thus, if we define the set $C_{coex}$ as
\begin{equation}
  \label{eq:def-Cn}
  C_{coex}:=\{(x,y,z)\in{\cal X}^3:LV(3,(x,y,z))\mbox{\
    belongs to classes 26, 29, 31 or 33}\},
\end{equation}
 Assumption (B) will be satisfied for all $(x,y,z)\in{\cal X}^3\backslash C_{coex}$
  as soon as Assumption (C2) is
 satisfied.

\medskip
\noindent Remark in addition that, as checked from
Fig.~\ref{fig:10}, if $x$ and $y$ coexist and $f(z;x,y)>0$, then
$(x,y,z)\in C_{coex}$ if and only if both of the following
properties are satisfied
\begin{description}
\item[\textmd{(P1)}] If $f(y;x,z)$ is well-defined, then $f(x;z)$,
  $f(z;x)$ and $f(y;x,z)$ have all the same sign.
\item[\textmd{(P2)}] If $f(x;y,z)$ is well-defined, then $f(y;z)$,
  $f(z;y)$ and $f(x;y,z)$ have all the same sign.
\end{description}
\end{proof}

\noindent Assumptions (C1) and (C2) will be summarized in
Assumption (C).

\noindent Similarly as in Section \ref{sec:TSS}, we define the
killed PES $(Z_t^{(2)},t\geq 0)$ as a Markov pure jump process on
${\cal M}_0 \cup \{\partial\}$, with infinitesimal  generator
${{\cal L}^{(2)}}$. The latter is  given by~(\ref{eq:gene-PES})
for $d=1$, and for $d=2$ and coexisting $x_1, x_2$, it is modified
as follows. Let $\nu:=\sum_{i=1}^2\bar{n}_i(x_1,x_2)\delta_{x_i}$,
then
\begin{align}
  &{{\cal L}^{(2)}}\varphi(\nu) =\int_{\mathbb{R}^l}\sum_{j=1}^2 \Big(
  \varphi\Big(\sum_{i=1}^{2}n^*_i(x_1,x_2,x_j+h)\delta_{x_i}+n^*_3(x_1,x_2,x_j+h)\delta_{x_j+h}\Big)
  -\varphi(\nu) \Big)\times \notag \\ & \quad
  p(x_j)\lambda(x_j)\bar{n}_j(x_1,x_2) \frac{[f(x_{j}+h;x_1,x_2)]_+}
  {\lambda(x_{j}+h)}\mathbf{1}_{\{(x_1,x_2,x_j+h)\not\in
    C_{coex}\}}m(x_j,h)dh \notag \\
  & \quad +\int_{\mathbb{R}^l}\sum_{j=1}^2 \big(\varphi(\partial)
  -\varphi(\nu)\big) p(x_j)\lambda(x_j)\bar{n}_j(x_1,x_2)
  \mathbf{1}_{\{(x_1,x_2,x_j+h)\in
    C_{coex}\}}m(x_j,h)dh. \label{eq:gene-mod-PES}
\end{align}

\noindent This modification amounts to construct the killed PES as
the PES, and send it to the cemetery state as soon as a mutant
trait $x_3$ appears in a dimorphic population of traits
$x_1,x_2\in{\cal X}$ such that the Lotka Volterra dynamics
associated with traits $x_1,x_2,x_3$ belongs to classes 26, 29, 31
or 33. Notice that the killed PES's support  has at most two traits at
each time.

\medskip\noindent
As in Section \ref{sec:TSS}, we deduce the following results.
\begin{prop}
  \label{prop:mod-PES-2}
  Under Assumptions \textup{(A)} and \textup{(C)}, the killed PES
  $(Z_t^{(2)}, t\geq 0)$ is almost surely well-defined and belongs
  almost surely to ${\cal M}_0\cup\{\partial\}$ for all time.
\end{prop}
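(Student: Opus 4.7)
The plan is to verify two properties that together make the killed PES a bona fide càdlàg Markov pure jump process on ${\cal M}_0\cup\{\partial\}$: first, that the total jump rate out of any state $\nu\in{\cal M}_0$ is bounded uniformly in $\nu$, which rules out explosion; and second, that every possible jump that does not lead to $\partial$ lands inside ${\cal M}_0$. Since $\partial$ is absorbing by construction, these two facts together deliver the proposition.

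For the bound on rates, I would read off the generator~\eqref{eq:gene-mod-PES}. The total rate of leaving $\nu=\sum_{i=1}^d\bar n_i(\mathbf{x})\delta_{x_i}$ (with $d\in\{1,2\}$) is bounded by $\sum_{j=1}^d p(x_j)\lambda(x_j)\bar n_j(\mathbf{x})\int\bar m(h)\,dh$, plus the analogous killing term. Under Assumption~(A), the coexistence identity $\sum_j\alpha(x_i,x_j)\bar n_j(\mathbf{x})=r(x_i)$ together with (A2) yields $\bar n_j(\mathbf{x})\le\bar\lambda/\underline\alpha$, and (A1), (A3) bound the remaining factors. Hence the total jump rate is majorised by a constant depending only on the parameters in~(A), so no explosion occurs and the process can be constructed on all of $\mathbb{R}_+$.

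For the second property I would distinguish two non-killed regimes. In the monomorphic case, a jump from $\bar n(x)\delta_x$ produces $\bar n(x+h)\delta_{x+h}$; under (A2) the singleton $\{x+h\}$ coexists in the sense of Definition~\ref{def:coex}, and Proposition~\ref{prop:D1->B} ensures that this happens for Lebesgue almost every admissible direction $h$, so the target state lies in~${\cal M}_0$. In the dimorphic case the indicator $\mathbf{1}_{\{(x_1,x_2,x_j+h)\notin C_{coex}\}}$ appearing in~\eqref{eq:gene-mod-PES} is precisely what is needed to invoke Proposition~\ref{prop:C2->B}: on its support, Assumption~(B) holds at the triple $(x_1,x_2,x_j+h)$, so (B1) guarantees that $\mathbf{n}^*(x_1,x_2,x_j+h)$ is well-defined and (B2) guarantees that the traits indexed by $I(\mathbf{n}^*)$ coexist. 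The post-jump state $\sum_{i\in I(\mathbf{n}^*)}n^*_i(x_1,x_2,x_j+h)\,\delta_{x_i}$ therefore belongs to~${\cal M}_0$.

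The delicate point, and the place where I expect to be most careful, is that Assumptions (C1), (C2) and Propositions~\ref{prop:D1->B}, \ref{prop:C2->B} deliver Assumption~(B) only for \emph{Lebesgue almost every} mutant direction $h$. To transfer this to an almost-sure statement about the sample paths of $Z^{(2)}$, I would rely on Assumption~(A3), which makes the mutation law $m(x_j,h)\,dh$ absolutely continuous with respect to Lebesgue measure on $\mathbb{R}^l$. Consequently, the exceptional null set of bad directions is charged with probability zero at each jump, and since the rate bound from the first paragraph ensures that only finitely many jumps occur on any bounded time interval, a countable union argument shows that the entire trajectory of $Z^{(2)}$ stays in ${\cal M}_0\cup\{\partial\}$ almost surely, which completes the plan.
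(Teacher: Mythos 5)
Your proposal is correct and follows essentially the same route as the paper, which proves this proposition only by pointing back to the well-posedness argument for the unkilled PES: bounded jump rates from Assumption (A) together with the equilibrium identity \eqref{eq:F=0}, target states in ${\cal M}_0$ via (B1)--(B2) as supplied here by Propositions~\ref{prop:D1->B} and~\ref{prop:C2->B}, and the Lebesgue-null sets of bad mutant directions discarded by absolute continuity of $m(x,h)dh$. One small imprecision: from a monomorphic state $\bar{n}(x)\delta_x$ the killed PES $Z^{(2)}$ is not restricted to substitution jumps $\bar{n}(x+h)\delta_{x+h}$ --- unlike $Z^{(1)}$ it is not killed at first coexistence and may jump to the dimorphic state $\bar{n}_1(x,x+h)\delta_x+\bar{n}_2(x,x+h)\delta_{x+h}$, but this outcome is still covered by Assumption (B) for $d=1$, which Proposition~\ref{prop:D1->B} provides, so your conclusion stands.
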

 Moreover this PES is the limiting
process, on the mutation time scale, of the mutation-invasion
process killed at the first triple-coexistence time.
\begin{cor}
  \label{cor:triple}
  With the same assumption and notation as in
  Theorem~\textup{\ref{thm:PES-fdd}}, except that
  Assumption~\textup{(B)} is replaced by Assumption~\textup{(C)} and
  that $d\in\{1,2\}$, let
  $$
  \tilde\tau_{K}:=\inf\{t\geq
  0:\mbox{Supp}(\nu_t^K)=\{x,y,z\}\mbox{\ such that\ }(x,y,z)\in
  C_{coex}\}.
  $$
  Then the process
  $$
  \Big(\nu^K_{\frac{t}{K
      u_K}}\mathbf{1}_{\{\frac{t}{K
      u_K}\leq\tilde\tau_{K}\}}+\partial\ \mathbf{1}_{\{\frac{t}{K
      u_K} > \tilde\tau_{K}\}},t\geq 0\Big)
  $$
  converges as $K\rightarrow+\infty$ to the killed PES
  $(Z^{(2)}_t,t\geq 0)$ with initial condition
  $Z^{(2)}_0=\sum_{i=1}^d\bar{n}_i(\mathbf{x})\delta_{x_i}$.
\end{cor}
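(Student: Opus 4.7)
The plan is to follow the strategy of Theorem~\ref{thm:PES-fdd} and of Corollary~\ref{cor:double}, replacing the single-trait killing criterion by the triple-coexistence criterion $(x_1,x_2,x_3)\in C_{coex}$. The starting observation is that, by Proposition~\ref{prop:C2->B}, Assumption~\textup{(C)} implies that Assumption~\textup{(B)} holds for every configuration of at most two coexisting resident traits together with any mutant $y$ such that the resulting triple does \emph{not} lie in $C_{coex}$. Hence, on any time interval on which the support of the microscopic process consists of at most two coexisting traits and no mutation producing a $C_{coex}$ configuration has yet occurred, the three-phase analysis of Section~\ref{sec:idea-pf} (exit from a neighbourhood of the equilibrium, comparison of the rare mutant to a branching process during the invasion phase, and Lotka--Volterra stabilization to $\mathbf{n}^*$) applies \emph{verbatim} as in the proof of Theorem~\ref{thm:PES-fdd}.

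First, I would introduce the stopping time
\[
  \tilde\tau_K=\inf\{t\geq 0:\mbox{Supp}(\nu^K_t)=\{x,y,z\},\ (x,y,z)\in C_{coex}\}
\]
and decompose the trajectory on the mutation time scale as an alternation of stabilization periods (of length $O(\log K)$, negligible compared with $1/(Ku_K)$ thanks to~(\ref{eq:u_K-K})) followed by exponential waiting times for the next mutation, whose rate from trait $x_j$ is close to $p(x_j)\lambda(x_j)\bar{n}_j(\mathbf{x})$. For each such mutation, one conditions on the law $m(x_j,h)dh$ of the mutant increment and splits according to whether $(x_1,\ldots,x_d,x_j+h)\in C_{coex}$ or not. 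In the first case, one declares the limit process killed and, by the strong Markov property applied to $\nu^K$ at the mutation time, it suffices to check that $\nu^K_{t/Ku_K}\to\partial$ for $t>\tilde\tau_K$ in the chosen topology; this follows from the fact that, in the generator~(\ref{eq:gene-mod-PES}), the rate of a killing mutation matches the limit of the corresponding microscopic rate. In the second case, Proposition~\ref{prop:C2->B} grants Assumption~\textup{(B)} for $LV(d+1,(x_1,\ldots,x_d,x_j+h))$, so the invasion probability is $[f(x_j+h;\mathbf{x})]_+/\lambda(x_j+h)$ and the new equilibrium $\mathbf{n}^*$ of Assumption~\textup{(B1)} is reached within $O(\log K)$, exactly as in Theorem~\ref{thm:PES-fdd}.

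Concretely, I would reproduce the induction on jumps of the PES used for Theorem~\ref{thm:PES-fdd} (and for Corollary~\ref{cor:double}). Let $T^{(2)}_1<T^{(2)}_2<\cdots$ denote the successive jump times of $Z^{(2)}$, and let $T^{(2)}_\infty=\lim T^{(2)}_n$ be the killing time (set $T^{(2)}_\infty=+\infty$ if killing never occurs). By the same coupling argument as in Appendix~\ref{sec:pf-PES}, one shows by induction on $n$ that the microscopic process on the mutation time scale converges to $Z^{(2)}$ in the sense of finite dimensional distributions up to time $T^{(2)}_n\wedge\tilde\tau_K$, and that $\tilde\tau_K/(Ku_K)$ converges in distribution to $T^{(2)}_\infty$. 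The killing indicator $\mathbf{1}_{\{(x_1,x_2,x_j+h)\in C_{coex}\}}$ appearing in~(\ref{eq:gene-mod-PES}) is produced exactly at the mutation events responsible for the first triple-coexistence falling into $C_{coex}$, which is also the definition of $\tilde\tau_K$, so the limit jump rates match term by term.

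The step I expect to be the most delicate is the interchange between the stopping at $\tilde\tau_K$ and the passage to the limit, since $C_{coex}$ is defined through topological equivalence classes of $3$-dimensional Lotka--Volterra systems whose boundary corresponds to degenerate cases (zero 2- or 3-dimensional fitnesses). Here Assumption~\textup{(C2)} is used crucially: for almost every mutant increment $h$ the configuration $(x_1,x_2,x_j+h)$ lies in the interior of its equivalence class, so the event $\{(x_1,x_2,x_j+h)\in C_{coex}\}$ is continuous in $h$ under $m(x_j,h)dh$, which avoids any pathological accumulation of near-boundary configurations and allows the law of the mutant increment at a killing jump to pass to the limit. Once this is handled, everything else is a routine adaptation of the proofs of Theorem~\ref{thm:PES-fdd} and Corollary~\ref{cor:double}.
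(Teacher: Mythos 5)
Your proposal is correct and follows essentially the same route as the paper, which itself only remarks that Corollary~\ref{cor:triple} is deduced as in Section~\ref{sec:TSS} by adapting the proof of Theorem~\ref{thm:PES-fdd}: you invoke Proposition~\ref{prop:C2->B} to obtain Assumption~(B) off $C_{coex}$, reuse the three-phase invasion analysis between mutations, and kill the process at the first appearance (not invasion) of a mutant creating a $C_{coex}$ triple, which correctly matches the absence of the invasion-probability factor in the last line of~(\ref{eq:gene-mod-PES}). Your observation that Assumption~(C2) keeps the mutant configurations away from the boundaries of the Zeeman equivalence classes is exactly the role that hypothesis plays in the paper.
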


\noindent Note that the killed PES obtained in this section is
sufficient to study the phenomenon of evolutionary branching in
Section~\ref{sec:br} when ${\cal X}\subset\mathbb{R}$.

\section{Evolutionary branching and small jumps}
\label{sec:br}

We will assume, in all what follows, that the initial population is
monomorphic, in the sense that at time $0$, all individuals have the
same trait.

\noindent We have seen in Section~\ref{sec:TSS} that, as long as
 there is no coexistence of two traits in the population (Assumption (IIF)),
the support of the PES is reduced to a single trait and the
asymptotic dynamics of the population is given by the killed PES
$Z^{(1)}$ with generator~(\ref{eq:1gene-mod-PES}). In this
section, our aim is to characterize the traits around which (IIF)
 fails and how evolutionary branching can occur in this case, as observed in Fig.~\ref{fig:ex2}(b).
 To do
so, following a general idea of the biological
literature~\cite{metz-geritz-al-96,dieckmann-law-96,geritz-metz-al-97,geritz-gyllenberg-al-02,diekmann-jabin-al-05,geritz-05},
a key assumption is that the mutation amplitude is small. In this
situation, we will study the behavior of the PES on large time
scales which will allow us to observe a global evolutionary
dynamics.

\noindent In Subsection~\ref{sec:caneq}, we assume that (IIF) is
always satisfied and we study the TSS with a small mutation step
scaling $\varepsilon$. We prove that on the longer time scale ${t\over
  \varepsilon^2}$, the dynamics of the re-scaled TSS converges, when
$\varepsilon$ tends to zero, to the solution of a (deterministic) ODE,
called \emph{canonical equation of adaptive dynamics}, or, more simply
\emph{canonical equation}. In Subsection~\ref{sec:ES}, we come back to
the general case. We show that (IIF) is satisfied on the time scale of
the canonical equation and that evolutionary branching can only occur
on a longer time scale.  We are able to characterize the points,
called ``evolutionary singularities'', in the neighborhood of which
evolutionary branching may occur. In Subsection~\ref{sec:thm-br}, we
state and prove our main result of this section, giving a criterion
for evolutionary branching in the limit of small mutational jumps. We
thus rigorously prove a criterion stated with a heuristic
justification in~\cite{metz-geritz-al-96}.

\bigskip
\noindent Let us firstly  introduce  the following  additional
technical  Assumptions~(A'):
\begin{description}
\item[\textmd{(A'1)}] The trait space ${\cal X}$ is convex. It is
often  implicitly assumed for biological models with continuous
trait space.

\item[\textmd{(A'2)}] The distribution $m(x,h)dh$ has finite and
  bounded (in $x$) third-order moments.
\item[\textmd{(A'3)}] The map $x\mapsto m(x,h)dh$ is Lipschitz
  continuous from $ {\cal X}$ to the set of probability measures
  ${\cal P}(\mathbb{R}^l)$, for the Wasserstein metric
  \begin{multline*}
    \rho(P_1,P_2) = \inf\Big\{\int_{
      {\mathbb{R}^l}\times  {\mathbb{R}^l}} |x-y|\ R(dx,dy);
    R \in {\cal P}(\mathbb{R}^l\times \mathbb{R}^l) \hbox{  with
      marginals } P_1 \hbox{ and } P_2\Big\}.
  \end{multline*}
\item[\textmd{(A'4)}] The function
  $$
  g(y;x)=p(x)\lambda(x)\bar{n}(x) {f(y;x)\over \lambda(y)}
  $$
  is continuous on $ {\cal X}^2$ and of class ${\cal C}^1$ with respect to
  its first coordinate, where $f(\cdot;\cdot)$ is defined
  in~(\ref{eq:fitness-d=1}). Since ${\cal X}$ is a compact set of
  $\RR^l$, there exists a constant $\gamma > 0$ such that $\forall
  x,y\in {\cal X}$,
  $
  \ [g(y;x)]_+\leq \gamma.
  $
\end{description}

\bigskip
\noindent
 Later in this section, we will also need Assumption~(A''):
\begin{description}
\item[\textmd{(A'')}] The functions $\lambda(x)$ and $\mu(x)$ are
  ${\cal C}^3$ on ${\cal X}$ and the function $\alpha(x,y)$ is ${\cal C}^4$ on
  ${\cal X}^2$.
\end{description}

\medskip
\noindent Note that~(A'') implies~(A'4).

\bigskip
\noindent Finally, let us introduce the parameter
$\varepsilon\in(0,1]$ scaling the size of mutation. Since ${\cal
X}$ is convex, $x+\varepsilon Y\in{\cal X}$ a.s.\ for all
$x\in{\cal X}$ and $0\leq\varepsilon\leq 1$, where $Y$ is
distributed following $m(x,h)dh$. Therefore, it is possible to
define a PES in which mutational jumps are scaled by the parameter
$\varepsilon$, by replacing in its generator~(\ref{eq:gene-PES})
$m(x_j,h)dh$ by $m(x_j,h)dh\circ H^{-1}_{\varepsilon}$ for all
$j\in\{1,\ldots,d\}$, where $H_\varepsilon(h)=\varepsilon h$.
Under Assumptions~(A) and~(B), we define this way a ``rescaled
PES'' $(Z^\varepsilon_t,t\geq 0)$. If only Assumptions~(A) and~(C)
are satisfied, we do similar changes in~(\ref{eq:gene-mod-PES}) to
obtain a ``rescaled killed PES'' $(Z^{(2),\varepsilon}_t,t\geq
0)$. Finally, we do a time scaling of order $1/\varepsilon^2$ to
obtain the rescaled PES
$$
\tilde{Z}^{\varepsilon}_t=
\begin{cases}
  Z^\varepsilon_{t/\varepsilon^2} & \mbox{if Assumptions~(A) and~(B)
    are satisfied} \\
  Z^{(2),\varepsilon}_{t/\varepsilon^2} & \mbox{if only Assumptions~(A) and~(C)
    are satisfied.}
\end{cases}
$$
Since both $Z^\varepsilon_t$ and $Z^{(2),\varepsilon}_t$ agree as long as
there is no triple coexistence, and since we will only be interested
in the sequel to the cases where the PES is monomorphic or dimorphic,
we will not need to distinguish between these two cases.


\subsection{The TSS and the Canonical Equation of Adaptive Dynamics}
\label{sec:caneq}

Doing a similar time scaling as for $\tilde{Z}^\varepsilon$, we
can define for all $\varepsilon\in(0,1]$, the
$\varepsilon$-rescaled TSS $(X^\varepsilon_t,t\geq0)$ by modifying
the generator~(\ref{eq:gen-TSS}) as follows. For all ${\cal
  C}^1_b$-valued function $\varphi$,
\begin{equation}
  \label{eq:rescaled-generator}
  L^{\varepsilon}\varphi(x)=
  \frac{1}{\varepsilon^2}\int_{\mathbb{R}^l}(\varphi({x}+\varepsilon h)-
  \varphi({x}))[g(x+\varepsilon h;{x})]_+m(x,h)dh.
\end{equation}
From a mathematical point of view, the multiplicative term
$\varepsilon^{-2}$ takes into account that the integral term is of
order $\varepsilon^2$, because of $g(x;x)=0$ and Assumption~(A'4).

\medskip
\noindent Let us now state the convergence theorem of the rescaled
TSS to the canonical equation of adaptive dynamics. Its proof is
based on a standard uniqueness-compactness method.

\begin{thm}
  \label{thm:convergence}
  Assume~\textup{(A)} and \textup{(A')}. Suppose also that the family
  of initial states $\{X^{\varepsilon}_0\}_{0<\varepsilon\leq 1}$ is
  bounded in $\mathbb{L}^2$ and converges in law to a random variable
  $X_0$ as $\varepsilon\rightarrow 0$.

\medskip
\noindent
  Then  for each $T>0$, the sequence $(X^{\varepsilon})$ converges when $\varepsilon\rightarrow 0$,
  for the Skorohod topology of $\mathbb{D}([0,T],{\cal
    X})$, to the process
  $({x}(t),t\leq T)$ with initial state
  $X_0$ and with deterministic sample paths, unique solution
  of the ordinary differential equation, known as canonical equation of adaptive dynamics.
  \begin{equation}
    \label{eq:n-morphic-canonical-equation}
    \frac{dx}{dt}=\int_{\mathbb{R}^l}h
    [h\cdot\nabla_1g(x;{x})]_+\ m(x,h)dh.
  \end{equation}
\end{thm}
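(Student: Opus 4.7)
The plan is to apply the classical uniqueness–compactness method for convergence of Markov processes to an ODE. For $\varphi \in C^2_b(\mathcal{X},\mathbb{R})$, I would start from the semimartingale decomposition
\begin{equation*}
\varphi(X^\varepsilon_t) = \varphi(X^\varepsilon_0) + \int_0^t L^\varepsilon \varphi(X^\varepsilon_s)\,ds + M^{\varepsilon,\varphi}_t,
\end{equation*}
with $M^{\varepsilon,\varphi}$ a martingale, and carry out four steps: pointwise convergence of $L^\varepsilon \varphi$ to the canonical generator, tightness of $(X^\varepsilon)$, vanishing of $M^{\varepsilon,\varphi}$, and uniqueness of the limiting ODE.

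For the generator expansion, a Taylor expansion gives $\varphi(x+\varepsilon h)-\varphi(x) = \varepsilon h\cdot\nabla\varphi(x) + O(\varepsilon^2 |h|^2)$, while the identity $g(x;x)=0$ together with (A'4) yields $g(x+\varepsilon h;x) = \varepsilon h\cdot\nabla_1 g(x;x) + o(\varepsilon)$ uniformly in $x$ on the compact $\mathcal{X}$. Since $[\cdot]_+$ is $1$-Lipschitz, $\varepsilon^{-1}[g(x+\varepsilon h;x)]_+ \to [h\cdot\nabla_1 g(x;x)]_+$, and dominated convergence (using the $|h|^2$ and $|h|^3$ moment bounds from (A'2)) gives
\begin{equation*}
L^\varepsilon\varphi(x) \longrightarrow b(x)\cdot\nabla\varphi(x), \qquad b(x) := \int_{\mathbb{R}^l} h\,[h\cdot\nabla_1 g(x;x)]_+\,m(x,h)\,dh,
\end{equation*}
uniformly in $x\in\mathcal{X}$, which is precisely the generator of the ODE~(\ref{eq:n-morphic-canonical-equation}).

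For tightness in $\mathbb{D}([0,T],\mathcal{X})$, compact containment is automatic, so it suffices to verify Aldous' criterion. The drift is uniformly bounded, $\|L^\varepsilon\varphi\|_\infty \leq C\int|h|^2\,\bar m(h)\,dh < \infty$, and the predictable quadratic variation
\begin{equation*}
\langle M^{\varepsilon,\varphi}\rangle_t = \int_0^t\!\int \frac{(\varphi(X^\varepsilon_s+\varepsilon h)-\varphi(X^\varepsilon_s))^2}{\varepsilon^2}\,[g(X^\varepsilon_s+\varepsilon h;X^\varepsilon_s)]_+\,m(X^\varepsilon_s,h)\,dh\,ds
\end{equation*}
is of order $\varepsilon$, because its integrand is $O(\varepsilon|h|^3)$ and the third moment of $\bar m$ is finite by (A'2). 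Doob's inequality then shows $\sup_{t\leq T}|M^{\varepsilon,\varphi}_t|\to 0$ in $L^2$, so any limit point of $(X^\varepsilon)$ is a deterministic continuous path solving $x(t)=x(0)+\int_0^t b(x(s))\,ds$ with initial value $X_0$.

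The main obstacle is uniqueness of the limiting ODE. Assumption (A'4) gives only continuity of $b$, which via Peano ensures existence but not uniqueness. I would upgrade $b$ to Lipschitz regularity by combining the Wasserstein–Lipschitz dependence of $x\mapsto m(x,\cdot)$ from (A'3), the $1$-Lipschitz property of $[\cdot]_+$, and the joint continuity of $\nabla_1 g$ on the compact $\mathcal{X}^2$ inherited from (A'4); the integrand $x\mapsto h\,[h\cdot\nabla_1 g(x;x)]_+$ is then Lipschitz with constant bounded by $c|h|^2$, and (A'2) controls the $h$-integral. Cauchy–Lipschitz then yields a unique solution of~(\ref{eq:n-morphic-canonical-equation}) with initial condition $X_0$, and convergence of the whole sequence $(X^\varepsilon)$ in the Skorohod topology follows from tightness together with uniqueness of the limit.
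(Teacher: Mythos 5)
Your proposal is correct and follows essentially the same route as the paper's proof: the standard uniqueness--compactness (martingale problem) method, with uniform convergence of the rescaled generators via Taylor expansion and $g(x;x)=0$, tightness via Aldous' criterion, vanishing of the martingale part through an $O(\varepsilon)$ bound on the predictable quadratic variation, and uniqueness of the limiting ODE by a Cauchy--Lipschitz argument combining (A'2)--(A'4) with the Kantorovich--Rubinstein duality for the Wasserstein metric. The only differences are cosmetic (you invoke compactness of ${\cal X}$ for compact containment where the paper proves an $\mathbb{L}^2$ bound, and you omit the explicit Poisson-measure construction), neither of which changes the argument.
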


\begin{rem}
  \label{rem:sym-CEAD}
  In the case where $m(y,\cdot)$ is a symmetrical measure on
  $\mathbb{R}^l$ for all $y\in{\cal X}$,
  Equation~\textup{(\ref{eq:n-morphic-canonical-equation})} gets the
  classical form, heuristically introduced
  in~\textup{\cite{dieckmann-law-96}},
  \begin{equation}
    \label{eq:symmetrical-n-morphic-canonical-equation}
    \frac{dx}{dt}=\frac{1}{2}K(x)\nabla_1g(x;{x}),
  \end{equation}
  where $\ K(x)=\left(k_{ij}(x)\right)_{1\leq i,j\leq l}\ $ is the
  covariance matrix of $m(x,h)dh$.
\end{rem}

\begin{proof}

  \noindent (i) \textbf{Uniqueness of the solution of
  Equation \eqref{eq:n-morphic-canonical-equation} with given initial
  condition.}

\noindent
 Let us show that $\ a(x)= \int_{\mathbb{R}^l}h
  [h\cdot\nabla_1g(x;\mathbf{x})]_+\ m(x,h)dh$ is Lipschitz continuous
  on ${\cal X}$. We have
\begin{multline}
  \label{eq:Cauchy-Lipschitz}
    \|a({x})-a({x}')\|\leq\int_{\mathbb{R}^l}\|h\|
    \times\bigl|[h\cdot\nabla_1g(x;{x})]_+-
  [h\cdot\nabla_1g(x';{x}')]_+\bigr|\ m(x,h)dh \\ \quad
  +\left\|\int_{\mathbb{R}^l}h[h\cdot\nabla_1g(x';{x}')]_+
    (m(x,h)-m(x',h))dh\right\|.
\end{multline}

\medskip
\noindent Because of $|[a]_+-[b]_+|\leq|a-b|$, Assumptions  (A'2)
and (A'4), and that the support of all measures $m(x,h)dh$ is
included in a bounded set, the first term of the right hand side
of \eqref{eq:Cauchy-Lipschitz} is bounded by some constant times
$\|{x}-{x}'\|$.

\medskip
\noindent If we denote by $\xi$ the vector $\nabla_1g(x';{x'})$
and $\psi(h)=h[h\cdot\xi]_+$, then
\begin{equation*}
    \|\psi(h)-\psi(h')\|
    \leq\|(h-h')[h\cdot\xi]_+\|+\|h'([h\cdot\xi]_+-
    [h'\cdot\xi]_+)\|
    \leq 2 \|\xi\|\ \|h-h'\|\ (\|h\|+
    \|h'\|).
\end{equation*}
Thus, using the dual form of the Kantorovich-Rubinstein metric
(see Rachev~\cite{rachev-91}) and (A'3), one obtains that the
second term of the right-hand side of~(\ref{eq:Cauchy-Lipschitz})
is also bounded by some constant times $\|x-x'\|$. Hence
Cauchy-Lipschitz Theorem can be applied and $(\mathbf{x}(t),t\geq
0)$ is uniquely defined.

\medskip \noindent (ii) \textbf{The processes $X^\varepsilon,
\,\varepsilon>0$, with generator $L^\varepsilon$ can be
constructed on the same probability space.}

\noindent Recall the definition of $\gamma$ in Assumption~(A'4).

\begin{lem}
  \label{prop:construct-TSS}
  Assume \textup{(A)} and \textup{(A')}. Let $(\Omega,{\cal F},P)$ be
  a probability space and  $N(dh, d\theta, ds)$ be  a point Poisson measure
   on $ \mathbb{R}^l\times [0,1]\times
  \mathbb{R}_+ $ with intensity $\gamma \bar{m}(h) dh d\theta ds$.  Let
  $\varepsilon>0$ and denote by $N^\varepsilon$ the image measure of
   $N$ by the mapping $s \mapsto
  \varepsilon^2s$.  Let $X_0^\varepsilon$ be a ${\cal X}$-valued random
  variable, independent of $N$.  Then the process
  $X^\varepsilon$ defined by 
  \begin{equation}
    \label{def:proc-eps}
  X^\varepsilon_t=X^\varepsilon_0 + \int_{\mathbb{R}^l\times
    [0,1]\times [0,t]} (\varepsilon \, h)\ {\bf 1}_{\left\{\theta\leq
      {[g(X^\varepsilon_{s-}+\varepsilon h;X^\varepsilon_{s-})]_+\over
        \gamma}{m(X^\varepsilon_{s-},h)\over \bar{m}(h)}\right\}}
  N^\varepsilon(dh, d\theta, ds),
  \end{equation}
  is a jump Markov process with
  generator $L^\varepsilon$. Its law will be called
  $\mathbf{P}^{\varepsilon}_{X^{\varepsilon}_0}$.
\end{lem}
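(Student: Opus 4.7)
The plan is to construct $X^\varepsilon$ pathwise from the Poisson measure $N$ via the thinning recipe encoded in~(\ref{def:proc-eps}), and then to identify its generator by a standard expectation calculation against the Poisson compensator. The construction is a routine Poisson thinning, so no substantive obstacle arises; the only point requiring attention is the bookkeeping of the time rescaling $s\mapsto\varepsilon^2 s$ that relates $N$ and $N^\varepsilon$, which is precisely what produces the $\varepsilon^{-2}$ prefactor in $L^\varepsilon$ (compensating for the $O(\varepsilon)$ smallness of $[g(x+\varepsilon h;x)]_+$ near the diagonal, since $g(x;x)=0$).

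First I would verify that the thinning indicator in~(\ref{def:proc-eps}) takes values in $\{0,1\}$: by Assumption~(A'4), $[g(x+\varepsilon h;x)]_+/\gamma\in[0,1]$, and by Assumption~(A3), $m(x,h)/\bar m(h)\in[0,1]$ whenever $\bar m(h)>0$. The image measure $N^\varepsilon$ of $N$ under $(h,\theta,s)\mapsto(h,\theta,\varepsilon^2 s)$ is a Poisson random measure on $\mathbb{R}^l\times[0,1]\times\mathbb{R}_+$ with intensity $\varepsilon^{-2}\gamma\bar m(h)\,dh\,d\theta\,ds$. By~(A3), its total mass on any slab $\mathbb{R}^l\times[0,1]\times[0,T]$ is finite, so almost surely $N^\varepsilon$ has only finitely many atoms on such a slab. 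The process $(X^\varepsilon_t)$ is then built iteratively: it is constant between successive atoms of $N^\varepsilon$, and at an atom $(h,\theta,s)$ it jumps from $X^\varepsilon_{s-}$ to $X^\varepsilon_{s-}+\varepsilon h$ if the thinning indicator holds and stays in place otherwise; convexity of ${\cal X}$ (Assumption~(A'1)) guarantees that the jump destination lies in ${\cal X}$. Pathwise existence and uniqueness are immediate from this deterministic recipe, and the Markov property follows from the strong Markov property of $N^\varepsilon$: conditionally on $\cF_t$, the restriction of $N^\varepsilon$ to $(t,+\infty)$ is an independent Poisson measure with the same intensity, and the future of $X^\varepsilon$ is driven only by this restriction and by $X^\varepsilon_t$.

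To identify the generator, I would apply the jump formula to a bounded measurable $\varphi$: setting $\pi(x,h):=\frac{[g(x+\varepsilon h;x)]_+}{\gamma}\frac{m(x,h)}{\bar m(h)}$, one has
\begin{equation*}
\varphi(X^\varepsilon_t)-\varphi(X^\varepsilon_0) = \int_{[0,t]\times\mathbb{R}^l\times[0,1]} [\varphi(X^\varepsilon_{s-}+\varepsilon h)-\varphi(X^\varepsilon_{s-})]\,\mathbf{1}_{\{\theta\leq\pi(X^\varepsilon_{s-},h)\}}\,N^\varepsilon(dh,d\theta,ds).
\end{equation*}
Taking expectations, replacing $N^\varepsilon$ by its intensity $\varepsilon^{-2}\gamma\bar m(h)\,dh\,d\theta\,ds$, and carrying out the $\theta$-integration produces a factor $\pi(X^\varepsilon_s,h)$; the remaining $\gamma\bar m(h)$ then cancels the auxiliary $\gamma$ and $\bar m(h)$ in the denominators of $\pi$, leaving exactly $\mathbb{E}\int_0^t L^\varepsilon\varphi(X^\varepsilon_s)\,ds$. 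This identifies $L^\varepsilon$ as the infinitesimal generator and completes the proof.
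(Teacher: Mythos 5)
Your proposal is correct and follows essentially the same route as the paper, which likewise identifies the generator by compensating the Poisson integral (via It\^o's formula for jump processes) after the pathwise thinning construction; your version merely spells out the preliminary checks (indicator in $\{0,1\}$, a.s.\ finitely many atoms per time slab, Markov property from independence of Poisson increments) that the paper leaves implicit. The bookkeeping of the time change $s\mapsto\varepsilon^2 s$ and the cancellation of $\gamma\bar m(h)$ against the denominators of the acceptance probability are exactly as in the paper's computation.
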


\noindent Indeed,  using Itô's formula, one observes that for a
bounded function $\varphi$ on ${\cal X}$, 
\begin{multline*}
\varphi(X^\varepsilon_{t})-\varphi(X_0^\varepsilon)\\
 \hskip 0.5cm -\int_0^t\int_{\mathbb{R}^l\times [0,1]}
\left(\varphi(X^\varepsilon_{\varepsilon^2 s}+\varepsilon
h)-\varphi(X^\varepsilon_{\varepsilon^2s})\right){\bf
1}_{\{\varepsilon^2 s\leq t\}}
g(X^\varepsilon_{\varepsilon^2s}+\varepsilon h;
X^\varepsilon_{\varepsilon^2s})m(X^\varepsilon_{\varepsilon^2
s},h)dh d\theta ds
\end{multline*}
 is a martingale, which implies the result.

\medskip \noindent (iii) \textbf{Tightness of the sequence of laws
$\{\mathbf{P}^{\varepsilon}_{X^{\varepsilon}_0}\}_{\varepsilon>0}$.}

\noindent  We will use the Aldous criterion~\cite{Aldous-78}. Let
$\tau$ be a stopping time less than $T$ and
$(\delta_{\varepsilon})$ positive numbers converging to $0$ when
$\varepsilon\rightarrow 0$.  We remark that $|g(x+\varepsilon h;
{x})|\leq\varepsilon C\|h\|$, by an expansion of $g$ with respect
to its first variable and the fact that $g(x; x)=0$,  and since
$\nabla_1 g$ is bounded by  a constant $C$. We have
\begin{equation*}
   \mathbf{E}(\|X^{\varepsilon}_{\tau+\delta_{\varepsilon}}
  -X^{\varepsilon}_{\tau}\|)
  =\mathbf{E}\left(
    \int^{\tau+\delta_{\varepsilon}}_{\tau}
    \int_{\mathbb{R}^l}\|\varepsilon h\|
    [g(X^{\varepsilon}_{s-}+\varepsilon
    h; X^{\varepsilon}_{s-})]_+\ m(X^{\varepsilon}_{s-},h)
    dh\frac{ds}{\varepsilon^2}\right)
   \leq CM_2\delta_{\varepsilon}, \label{eq:tightness}
\end{equation*}
where $M_2= \int \|h\|^2 \bar{m}(h) dh$.  Then, for any
$\alpha>0$,
\begin{equation*}
  \mathbf{P}(\|X^{\varepsilon}_{\tau_{\epsilon}+\delta_{\varepsilon}}-
  X^{\varepsilon}_{\tau_{\varepsilon}}\|>\alpha)
  \leq\frac{nCM_2}{\alpha}\delta_{\varepsilon}\rightarrow 0 \ \hbox{ when } \ \varepsilon\rightarrow 0.
\end{equation*}
 This gives the first part of the
Aldous criterion. For the second part, we have to prove the
uniform tightness of the laws of $(\sup_{t\leq
T}\|X^{\varepsilon}_t\|)_{\varepsilon>0}$. We use Itô's formula to
write $(X^\varepsilon_t)^2$ from \eqref{def:proc-eps}, Schwarz'
and Doob's inequalities and obtain that $\mathbf{E}(\sup_{t\leq T}
\|X^{\varepsilon}_t\|^2)\leq C_T
(\mathbf{E}(\|X^{\varepsilon}_0\|^2)+ 1)$, where $C_T$ is a
constant depending on time $T$, on $M_2$ and on an upper-bound of
$[g]_+$. Since  $(X_0^{\varepsilon})_{0<\varepsilon\leq 1}$ is
bounded in $\mathbb{L}^2$, the tightness of the laws of
$(\sup_{t\leq T}\|X^{\varepsilon}_t\|)_{\varepsilon>0}\ $ follows.

\medskip \noindent (iv) \textbf{Convergence of the generators.}

\noindent
Let us now prove that 
\begin{equation}
  \label{eq:convergence-generators}
  \forall\varphi\in{\cal C}^2_b({\cal X}),\
  {1\over \varepsilon^2} L^{\varepsilon}\varphi\rightarrow L^0\varphi\mbox{\ uniformly on\
  }{\cal X},
\end{equation}
where $L^{\varepsilon}$ is defined in~(\ref{eq:gen-TSS}) and
$L^0$ is defined by
\begin{equation*}
  L^0\varphi(\mathbf{x})=\int_{\mathbb{R}^l}
  (h\cdot\nabla \varphi({x}))
  [h\cdot\nabla_1g(x; {x})]_+\ m(x,h)dh,
\end{equation*}
 where $\nabla\varphi({x})$ is the
gradient vector of $\varphi({x})$. We have,
\begin{multline}
  \label{eq:bourrin}
  \left|{1\over \varepsilon^2}L^{\varepsilon}\varphi({x}) -
  L^0\varphi({x})\right|\leq  \int_{\mathbb{R}^l}\![h\cdot\nabla_1
  g(x; {x})]_+\!\times\!\left|
    \frac{\varphi({x}+\varepsilon h)-
      \varphi({x})}{\varepsilon}-h
    \cdot\nabla\varphi({x})\right|m(x,h)dh
  \\ +  \int_{\mathbb{R}^l}
  \left|\frac{\varphi({x}+\varepsilon
      h)-\varphi({x})}
    {\varepsilon}\right|\times \left|\left[\frac{g(x+\varepsilon
        h; {x})}{\varepsilon}\right]_+ -[h\cdot\nabla_1
    g(x; x){x})]_+\right|m(x,h)dh.
\end{multline}
 Let us call $I_1$ and $I_2$ the quantities inside the integral
in the  first and the second term, respectively. Now, $\varphi$ is
$\mathcal{C}^1$, $g(x; {x})=0$  and by~ Assumption (A'),
 $g(x; y)$ is $\mathcal{C}^1$ with
respect to the first variable $x$. So, we can find $\theta_1$,
$\theta_2$ and $\theta_3$ in $[0,1]$ depending on ${x}$ and  $h$
such that
\begin{align*}
  I_1 & =[h\cdot\nabla_1
  g(x; {x})]_+\times |h\cdot\nabla\varphi({x}
  +\theta_3\varepsilon
  h)-h\cdot\nabla\varphi({x})|;\\
  I_2 & =|h\cdot\nabla\varphi({x}+\theta_1\varepsilon h)|\times
  |[h\cdot\nabla_1 g(x+\theta_2\varepsilon h; {x})]_+-
  [h\cdot\nabla_1 g(x; {x})]_+|.
\end{align*}
Since $\varphi$ is in $\mathcal{C}^2_b$, and because of
Assumption~(A'), we can choose a number $C$ such that
$\nabla\varphi$ and $\nabla_1 g$ are both $C$-Lipschitz and
bounded by $C$ on ${\cal X}$ and ${\cal X}^{2}$ respectively. Then
\begin{align*}
   I_1&\leq C\| h\|\times\| h\| C\|\theta_3\varepsilon
    h\|\leq\varepsilon C^2\|h\|^3;\\
    I_2&\leq C\| h\|\times |h\cdot\nabla_1 g(x+\theta_2\varepsilon
    h,{x})-h\cdot\nabla_1
    g(x,{x})|\leq\varepsilon C^2\| h\|^3.
\end{align*}
 It remains to put these bounds in Equation~(\ref{eq:bourrin})
to obtain:
\begin{equation*}
  \left|{1\over \varepsilon^2}L^{\varepsilon}\varphi({x})-L^0\varphi({x})\right|
  \leq 2\varepsilon C^2\int_{\mathbb{R}^l}\|h\|^3m(x,h)dh.
\end{equation*}
We conclude using Assumption~(A'2).

\medskip \noindent (v) \textbf{Martingale problem for limiting distributions.}

\noindent Finally, let us show that any accumulation point
$\mathbf{P}$ of the family of laws
$\{\mathbf{P}^{\varepsilon}_{X^{\varepsilon}_0}\}$ on
$\mathbb{D}([0,T],{\cal X})$ is the law of the process ${X}$
solution to~(\ref{eq:n-morphic-canonical-equation}) with initial
state $X_0$. Fix such a $\mathbf{P}$. Let us endow the space
$\mathbb{D}([0,T],{\cal X})$ with the canonical filtration ${\cal
  G}_t$, and for any $\varphi\in{\cal
  C}^2({\cal X})$, let us define on this space the
processes 
\begin{align*}
  M^{\varphi}_t(w) & =\varphi(w_t)-\varphi(w_0)-
  \int_0^t L^0\varphi(w_s)ds \\
   M^{\varepsilon,\varphi}_t(w) & =
  \varphi(w_t)-\varphi(w_0)-\int_0^t {1\over \varepsilon^2}
  L^{\varepsilon}\varphi(w_s)ds.
\end{align*}
 We will show that $M^{\varphi}=0$ $\mathbf{P}$-a.s. Fix
$\varphi\in{\cal C}^2({\cal X})$. It is standard, using It\^o
formula for jump processes, to show that, under
$\mathbf{P}^{\varepsilon}_{X^{\varepsilon}_0}$,
$M^{\varepsilon,\varphi}$ is a square-integrable ${\cal
  G}_t$-martingale and that
\begin{multline*}
    M^{\varepsilon,\varphi}_t(X^{\varepsilon})=
    \int_{\mathbb{R}^l\times[0,1]\times[0,t]}
    (\varphi(X^{\varepsilon}_s+\varepsilon
    h)-\varphi(X^{\varepsilon}_s))\\ \mathbf{1}_{\left\{
        \theta\leq\frac{[g(X^{\varepsilon}_{s-}+\varepsilon
          h,X^{\varepsilon}_{s-})]_+}{\gamma}
        \frac{m(X^{\varepsilon}_{s-},h)}{\bar{m}(h)}\right\} }
    \tilde{N}^\varepsilon\left(dh,d\theta,{ds}\right)
\end{multline*}
where $\tilde{N}^\varepsilon=N^\varepsilon-q^\varepsilon$ is the
compensated Poisson measure associated with $N^\varepsilon$, and
$q^\varepsilon(dh, d\theta, ds)$ is the image measure of $\gamma
\bar{m}(h) dh d\theta ds$ by $s \mapsto \varepsilon^2 s$. Thus,
using computation similar to~(\ref{eq:tightness}),
\begin{align}
   \mathbf{E}^\varepsilon(\langle M^{\varepsilon,\varphi}\rangle_t ) & =\frac{1}{\varepsilon^2}
  \mathbf{E}^\varepsilon\left( \int_0^t\int_{\mathbb{R}^l}(\varphi(X^{\varepsilon}_s+\varepsilon
  h)-\varphi(X^{\varepsilon}_s))^2
[g(X^{\varepsilon}_s+\varepsilon
  h,X^{\varepsilon}_s)]_+\ m(X^{\varepsilon}_s,h)dhds\right) \notag \\
  & \leq CC'M_3\,t\,\varepsilon, \label{eq:quadr-var}
\end{align}
where $\mathbf{E}^\varepsilon$ denotes the expectation under
$\mathbf{P}^{\varepsilon}_{X^{\varepsilon}_0}$,  $C'$ is a bound
for $\nabla\varphi$, and $M_3$  a bound of the third-order moment
of $m(y,h)dh$. Using~(\ref{eq:quadr-var}) and the fact that
$M^{\varphi}_t(w)=M^{\varepsilon,\varphi}_t(w) +\int_0^t({1\over
\varepsilon^2}L^{\varepsilon}\varphi(w_s)-L^0\varphi(w_s))ds$, it
follows that
\begin{equation*}
  \mathbf{E^\varepsilon}(|M_t^{\varphi}|^2)\leq
  2t^2\|{1\over
\varepsilon^2}\ L^{\varepsilon}\varphi-L^0\varphi\|^2_{\infty}
  +2C^2C'^2M_3^2t^2\varepsilon^2
\end{equation*}
which converges to $0$ when $\varepsilon\rightarrow 0$ thanks
to~(\ref{eq:convergence-generators}). Moreover by
\eqref{def:proc-eps}, we have that almost surely, $\ \sup_{t\leq
T} \|X^\varepsilon_t-X^\varepsilon_{t-}\|\leq C''\varepsilon$,
which implies that each limit process $X$ with law $\mathbf{P}$ is
almost surely continuous. Hence, for any $t\in [0,T]$, the
functional $\omega \mapsto \varphi(w_t)-\varphi(w_0)-
  \int_0^t L^0\varphi(w_s)ds$
 is continuous at $X$ for the weak
topology and since $\mathbf{P}$ is the weak limit of an extracted
sequence of $(\mathbf{P}^{\varepsilon}_{X^{\varepsilon}_0})$, it
follows that, under $\mathbf{P}$, $M^{\varphi}(w)=0$ a.s, which
concludes the proof.\end{proof}

\subsection{PES and Evolutionary Singularities}
\label{sec:ES}

\textbf{Until the end of Section~\ref{sec:br}, we will assume for
  simplicity that the trait space is one-dimensional ($l=1$),
  i.e. ${\cal X}\subset\RR$.}

\medskip
\noindent We have proved in the last subsection that, when
$\varepsilon \to 0$, the TSS is very close to the solution of the
canonical equation \eqref{eq:n-morphic-canonical-equation} on any
time interval $[0,T]$. The equilibria of this equation are given
by the points $x^*$ such that either $\partial_1 g(x^*;x^*)=0$, or
$\int_{\mathbb{R}_+}m(x^*,h)dh=0$ and $\partial_1g(x^*;x^*)>0$, or
$\int_{\mathbb{R}_-}m(x^*,h)dh=0$ and $\partial_1 g(x^*;x^*)<0$. We will
concentrate on the points such that $\partial_1 g(x^*;x^*)=0$, or
equivalently, $\partial_1 f(x^*;x^*)=0$, since
$$
\partial_1 g(x;x)= {1\over \lambda(x)} \partial_1 f(x;x)
p(x) \lambda(x) \bar{n}(x) = p(x)  \bar{n}(x) \partial_1 f(x;x).
$$
Remark that, since $f(x;x)=0$ for all $x\in {\cal X}$,
\begin{align}
  & \partial_1 f(x;x)+\partial_2 f(x;x)=0,\quad\forall x\in{\cal X}
  \label{eq:der-1-fitn} \\
  & \partial_{11} f(x;x)+2\partial_{12}
  f(x;x)+\partial_{22}f(x;x)=0,\quad\forall x\in{\cal X}. \label{eq:der-2-fitn}
\end{align}
 Therefore, $\partial_1f(x^*;x^*)=\partial_2 f(x^*;x^*)=0$.

\medskip
\noindent \begin{defi} \label{es} Points $x^*$ such that $\
\partial_1
  g(x^*; x^*)=0$, or equivalently, $\partial_1
  f(x^*;x^*)=\partial_2f(x^*; x^*)=0$ are called evolutionary
  singularities (ES).
\end{defi}

\begin{lem}
\label{lem:I_T}
Assume~\textup{(A)},~\textup{(A')} and~\textup{(A'')}.
\begin{description}
\item[\textmd{(1)}] The solution $x(t)$ of
\eqref{eq:n-morphic-canonical-equation} starting from a point that
is not an ES cannot attain an ES in finite time.
\item[\textmd{(2)}] Assume that $x(0)$ is not an ES and let $I_T =
  \{x(t), t\in [0,T]\}$. Then, for any sufficiently small $\eta>0$,
  for any $x$ at a distance to $I_T$ smaller than $\eta$ and for any
  $y$ sufficiently close to $x$, $x$ and $y$ satisfy~(IIF) and
  $(y-x)f(y;x)$ has constant sign.
\end{description}
\end{lem}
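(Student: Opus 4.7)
The plan is to treat the two parts of the lemma separately, both relying on the regularity provided by (A'') and the compactness/connectedness of $I_T$.

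For part (1), I would use uniqueness for the canonical equation. In step (i) of the proof of Theorem~\ref{thm:convergence} it was already shown that the vector field
\begin{equation*}
a(x) = \int_{\mathbb{R}^l} h\,[h\cdot\nabla_1 g(x;x)]_+\,m(x,h)\,dh
\end{equation*}
is Lipschitz on ${\cal X}$. By Definition~\ref{es}, an ES $x^*$ is precisely a zero of $\nabla_1 g(\cdot\,;\cdot)|_{\mathrm{diag}}$, and one sees immediately from the expression above that $a(x^*)=0$, so $x^*$ is an equilibrium of the canonical equation. If the solution $x(t)$ reached $x^*$ at some finite time $t_0$, uniqueness for the Lipschitz ODE would force $x(t)\equiv x^*$, contradicting the assumption that $x(0)$ is not an ES.

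For part (2), the idea is that on a neighborhood of $I_T$ the quantity $\partial_1 f(x;x)$ stays nonzero and of constant sign, after which a Taylor expansion of $f$ around the diagonal $\{y=x\}$ delivers both claims at once. Since $x(\cdot)$ is continuous, $I_T$ is compact and connected; by part (1) it contains no ES, so the continuous function $\varphi(x):=\partial_1 f(x;x)$ (continuous by (A'')) is nowhere zero on $I_T$. Connectedness gives a constant sign $\sigma\in\{-1,+1\}$, and compactness plus continuity yields $\eta_0>0$ and $c>0$ with $\sigma\varphi(x)\geq c$ whenever $\mathrm{dist}(x,I_T)\leq\eta_0$. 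Take $\eta<\eta_0/2$.

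For $x$ within $\eta$ of $I_T$ and $y$ sufficiently close to $x$ (so that $y$ also lies in the $\eta_0$-neighborhood), use $f(x;x)=0$ and the ${\cal C}^2$ regularity of $f$ from (A'') to write
\begin{equation*}
f(y;x) = (y-x)\,\varphi(x) + R(x,y)(y-x)^2,\qquad
f(x;y) = (x-y)\,\varphi(y) + R'(x,y)(x-y)^2,
\end{equation*}
with $R,R'$ bounded on ${\cal X}^2$ by some $M$. Choosing the threshold on $|y-x|$ smaller than $c/(2M)$, the bracketed factors $\varphi(x)+R(x,y)(y-x)$ and $\varphi(y)+R'(x,y)(x-y)$ both have sign $\sigma$. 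Hence $(y-x)f(y;x)=(y-x)^2[\varphi(x)+R(x,y)(y-x)]$ has constant sign $\sigma$, which is the second claim. For (IIF): if $f(y;x)>0$ then $\sigma(y-x)>0$, whence $\sigma(x-y)<0$ and therefore $f(x;y)=(x-y)[\varphi(y)+R'(x,y)(x-y)]$ has sign $-\sigma\cdot\sigma=-1$, i.e.\ $f(x;y)<0$.

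The only delicate point is making all the estimates uniform in $x$ and $y$; this is handled by the compactness of $I_T$ (and of its closed $\eta_0$-neighborhood in ${\cal X}$) together with the ${\cal C}^2$ bound on $f$ from (A''). Everything else is routine Taylor expansion around the diagonal.
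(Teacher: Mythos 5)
Your proof is correct and follows essentially the same route as the paper's: part (1) rests on the Lipschitz continuity of the vector field $a$ together with $a(x^*)=0$ at any ES (the paper makes the uniqueness argument quantitative via a Gronwall bound $|x(t)-x^*|\geq|x(0)-x^*|e^{-ct}$, whereas you invoke backward uniqueness for the Lipschitz ODE directly), and part (2) rests on the same compactness argument giving a uniform lower bound on $|\partial_1 f(x;x)|$ near $I_T$ followed by a first-order expansion of $f$ off the diagonal with uniformly bounded remainder. The only cosmetic difference in (2) is that you expand $f(x;y)$ around $(y,y)$ and use the sign of $\partial_1 f(y;y)$, while the paper expands both fitnesses at $(x,x)$ using the identity $\partial_1 f(x;x)+\partial_2 f(x;x)=0$; both are fine.
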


\begin{proof}
(1) Let $c$ be a constant such that $x\mapsto\int_{\RR}
h[h\cdot\partial_1 g(x;x)]_+m(x,h)dh$ is $c$-Lipschitz (the fact
that this is a Lipschitz function is shown in the proof of Theorem
\ref{thm:convergence}). Then, for any ES $x^*$,
$$
\Big|\frac{d}{dt}(x(t)-x^*)^2\Big|\leq 2\ |\dot{x}(t)|\
|x(t)-x^*|\leq 2\ c\ (x(t)-x^*)^2.
$$
Thus, $|x(t)-x^*|\geq |x(0)-x^*|\exp(-ct)>0$.

\medskip
\noindent  (2) Remark first that, from Point (1), $ C=\inf_{x\in
I_T}|\partial_1 f(x(t);
  x(t))|>0.
$
Therefore, for $\eta>0$ sufficiently small, $\{x\in{\cal X}:
\mbox{dist}(x,I_T)\leq\eta\}\subset\{x\in{\cal X}:|\partial_1 f(x(t);
  x(t))|>C/2\}$. Fix such an $\eta$.

\noindent Let us now consider some point $x$ in ${\cal X}$ such
that $\partial_1 f(x; x)>C/2$. Consider first $y$ in ${\cal X}$
such that $x<y$. Using that $f(x; x)=0$ and~(\ref{eq:der-1-fitn}),
a second-order expansion of $f(y; x)$ at $(x,x)$ implies that
$f(y; x)> C(y-x)/4$ provided that $|y-x|<{C\over 2C'}$, where
$C'>0$ is a constant uniformly upper-bounding the second-order
derivatives of $f(\cdot; \cdot)$ on the compact set ${\cal X}^2$.
Under the same condition, $f(x; y)<C(x-y)/4$. Therefore,
$f(x;y)f(y;x)<0$ if $|y-x|$ is small enough and $(y-x)f(y;x)$ has
constant sign. This reasoning gives the same conclusion if $y<x$
or $\partial_1 f(x; x)<-C/2$, giving the required result.
\end{proof}

\bigskip \noindent  Now we come back to the rescaled PES
$(\tilde{Z}^\varepsilon_t,t\geq 0)$ defined in the beginning of this
section and assume that its initial condition $\tilde{Z}^\varepsilon_0$ is
monomorphic.  We want to determine when evolutionary branching can
occur in this process. This requires that (IIF) (ensuring
non coexistence) fails. For $\varepsilon>0$, we define the first
coexistence time
$$
\tau^\varepsilon =\inf\{t>0,\  f(\tilde{Z}^\varepsilon_t,;
\tilde{Z}^\varepsilon_{t-})>0 \hbox{ and }
f(\tilde{Z}^\varepsilon_{t-}; \tilde{Z}^\varepsilon_t) >0\},
$$
and for any
  $\eta>0$, the entrance time of the process in a
$\eta$-neighborhood of an ES $x^*$,
  \begin{equation}
    \label{eq:def-theta}
    \theta^\varepsilon_{\eta}=\inf\{t\geq 0,\
    \mbox{\textup{Supp}}(\tilde{Z}^\varepsilon_t)\cap(x^*-\eta,x^*+\eta)
    \not=\emptyset\}.
  \end{equation}

\begin{thm}
  \label{thm:pes=tss}
  Assume that~\textup{(A)},~\textup{(A')},~\textup{(A'')}
  and~\textup{(B)} or~\textup{(C)} hold. Assume also that
  $\tilde{Z}_0^\varepsilon=\bar{n}(x)\delta_x$ where $x\in{\cal X}$ is
  \emph{not} an evolutionary singularity. Then,
\begin{description}
\item[\textmd{(i)}]
 For any $T>0$,
  $$
  \ \lim_{\varepsilon\rightarrow 0}\mathbb{P}(\tau^\varepsilon>T)=1.
  $$
  Moreover, for all $\eta>0$,
  $$
  \lim_{\varepsilon\rightarrow 0}\mathbb{P}(\forall t \in
  [0,T],\ \mbox{\textup{Card}}
  (\mbox{\textup{Supp}}(\tilde{Z}^\varepsilon_{t})) =1,\
  \|\mbox{\textup{Supp}}(\tilde{Z}^\varepsilon_{t})- x(t)\|\leq \eta)=1.
  $$
\item[\textmd{(ii)}] For any $\eta>0$, there exists
$\varepsilon_0>0$
  such that, for all $\varepsilon<\varepsilon_0$,
  \begin{gather}
    \PP(\theta^\varepsilon_{\eta}<\tau^\varepsilon) = 1 \quad
    \mbox{and}\notag \\
    \label{eq:pes=tss-ps}
    \PP(\forall t\in[0,\theta^\varepsilon_{\eta}],\
    \mbox{\textup{Supp}}(\tilde{Z}^\varepsilon_{t})=\{Y^{\varepsilon}_t\}\
    \mbox{with}\ t\mapsto Y^\varepsilon_t\ \mbox{monotonous on}\
    [0,\theta^\varepsilon_{\eta}])=1.
  \end{gather}
  \end{description}
\end{thm}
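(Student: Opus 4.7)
The argument rests on identifying the rescaled PES with the rescaled TSS $X^\varepsilon$ of Subsection~4.1 up to the first coexistence time $\tau^\varepsilon$, and then propagating the no-coexistence property along the canonical-equation trajectory. As long as $\tilde Z^\varepsilon$ is monomorphic, one can write $\tilde Z^\varepsilon_t = \bar n(Y^\varepsilon_t)\delta_{Y^\varepsilon_t}$; the restriction of the rescaled version of~\eqref{eq:gene-PES} to monomorphic measures reduces exactly to the generator $L^\varepsilon$ of~\eqref{eq:rescaled-generator}, and the post-jump state remains monomorphic precisely because no coexistence has yet occurred. Via the Poisson construction of Lemma~\ref{prop:construct-TSS}, I would couple $Y^\varepsilon$ and $X^\varepsilon$ (both started at $x$) so that $Y^\varepsilon_t = X^\varepsilon_t$ for every $t<\tau^\varepsilon$.

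Since $x(0)$ is not an ES, Lemma~\ref{lem:I_T}(1) shows that $x(\cdot)$ avoids every ES in finite time, so $I_T=\{x(t);\,t\in[0,T]\}$ stays at positive distance from the set of ES. By Lemma~\ref{lem:I_T}(2), I would then fix $\eta>0$ small enough that every $y$ at distance at most $\eta$ from $I_T$ satisfies~(IIF) with every sufficiently close $y'$, and that $(y'-y)f(y';y)$ has constant sign $D\in\{\pm 1\}$ on this ``safety region'': inside it no coexistence can occur and every TSS jump goes in direction $D$. Theorem~\ref{thm:convergence} now gives $X^\varepsilon\to x(\cdot)$ in the Skorohod topology of $\mathbb{D}([0,T],{\cal X})$, and since the limit is continuous this upgrades to uniform convergence in probability on $[0,T]$. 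For $\varepsilon$ small enough that a single jump has size below $\eta/2$, on the high-probability event $\{\sup_{t\leq T}|X^\varepsilon_t-x(t)|<\eta/2\}$ every point visited by $X^\varepsilon$ and every candidate jump lies in the safety region. Consequently $\tau^\varepsilon>T$ and $\mbox{Supp}(\tilde Z^\varepsilon_t)=\{X^\varepsilon_t\}$ stays $\eta$-close to $x(t)$, which is exactly~(i).

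For~(ii), pathwise monotonicity is the crux. In one dimension the canonical equation is itself monotone, and by Lemma~\ref{lem:I_T}(1) combined with compactness of ${\cal X}$ its $\omega$-limit must be an ES, say $x^*$. Enlarging the previous region to $\{y:\dist(y,x([0,\infty)))\leq\eta,\ |y-x^*|\geq\eta\}$ still yields a safety region, since $\partial_1 f(\cdot;\cdot)$ remains bounded away from $0$ there for $\eta$ small. As long as $X^\varepsilon$ stays in this region, all its jumps go in direction $D$, so $X^\varepsilon$ is pathwise monotone. A short Taylor expansion using $g(x;x)=0$ gives the total jump rate as $\Theta(\varepsilon^{-1}|\partial_1 g(x;x)|)$ with jump size $\Theta(\varepsilon)$, hence a $\Theta(1)$ macroscopic speed uniformly bounded below on the safety region. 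For $\varepsilon$ small enough that a single jump cannot overshoot $(x^*-\eta,x^*+\eta)$, this positive fluid speed forces $X^\varepsilon$ to enter that interval in finite time almost surely. The whole path before entry sits in the safety region, so the coupling of the first paragraph gives $Y^\varepsilon_t=X^\varepsilon_t$ on $[0,\theta^\varepsilon_\eta]$, $\tilde Z^\varepsilon$ stays monomorphic and monotone, and $\tau^\varepsilon>\theta^\varepsilon_\eta$ almost surely, proving~(ii).

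The main obstacle is upgrading the convergence-in-distribution of Theorem~\ref{thm:convergence} to the almost-sure conclusion of~(ii); monotonicity is the decisive ingredient, since once every accepted jump is forced to move in direction $D$ inside a uniformly safe region, compactness of ${\cal X}$ and a uniform lower bound on the fluid speed replace the distributional closeness by a deterministic one-dimensional bookkeeping argument. A secondary technical point is checking that $\eta$ can be chosen small enough that Lemma~\ref{lem:I_T}(2) applies uniformly on $x([0,\infty))\setminus B_\eta(x^*)$ rather than on a single compact $I_T$, which follows from a mild uniform-continuity argument on the second derivatives of $f$.
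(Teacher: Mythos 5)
Your proof is correct and follows essentially the same route as the paper's: identify the PES with the rescaled TSS up to $\tau^\varepsilon$, use Theorem~\ref{thm:convergence} to confine the TSS to an $\eta$-neighborhood of $I_T$, and combine Lemma~\ref{lem:I_T} with the $O(\varepsilon)$ bound on mutation steps to exclude coexistence and force monotonicity. The one place you go beyond the paper is in additionally proving $\theta^\varepsilon_\eta<\infty$ via a lower bound on the forward jump rate; the paper's proof of~(ii) does not address this (the analogous positive-speed argument appears only later, in the proof of Theorem~\ref{thm:br}~(a)), and be aware that your version of it relies on Assumption~(A''') and on the canonical equation actually converging to $x^*$, neither of which is among the stated hypotheses of Theorem~\ref{thm:pes=tss}.
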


\begin{proof}
  (i) Before the stopping time $\tau^\varepsilon$, and  since
  the initial condition is monomorphic, it is clear that the support of
  $\tilde{Z}^\varepsilon_{t}$ is a singleton whose dynamics is that of
  the rescaled TSS $(X^\varepsilon_t,t\geq 0)$.
  Because of Theorem~\ref{thm:convergence}, when $\varepsilon \to 0$,
  the TSS is close to the canonical equation.  In particular, for all
  $\eta>0$, its values on the time interval $[0,T]$ belong to the set
  $\{x\in{\cal X}:\mbox{dist}(x,I_T)\leq\eta\}$ with probability
  converging to 1. Moreover, since ${\cal X}$ is compact,
  $\mbox{Supp}(m(x,\cdot))\subset{\cal X}-x$ is included in the closed
  ball of $\mathbb{R}^l$ centered at 0 with diameter
  $2\mbox{diam}({\cal X})$. Therefore, the distance between a mutant
  trait and the trait of its progenitor in the rescaled PES
  $\tilde{Z}^\varepsilon$ is a.s.\ less that
  $\varepsilon c$, where $c$ is a constant. Hence, the result immediately follows from
  Lemma~\ref{lem:I_T}.

(ii) We also deduce from this lemma that for any $T>0$ such that
$I_T\cap(x^*-3\eta/2,x^*+3\eta/2)=\emptyset$,
  $
  \lim_{\varepsilon\rightarrow 0}\PP(\theta^\varepsilon_{\eta}>T)=1
  $. Moreover, the process $Y_t^\varepsilon$ in~(\ref{eq:pes=tss-ps}), which is exactly the
TSS of the previous section, is \emph{almost surely} monotonous
before time $\theta^\varepsilon_\eta$.
\end{proof}

\begin{rem} Theorem~\ref{thm:pes=tss} implies that, when the
initial population is monomorphic and away from evolutionary
singularities, evolutionary branching can only occur in the
neighborhood of an evolutionary singularity and on a longer time
scale than $T/\varepsilon^2$ when $\varepsilon\rightarrow 0$, for
all $T>0$.
\end{rem}

\medskip \noindent The next result shows that we can restrict to ES
that are not repulsive for the canonical equation.

\begin{prop}
  \label{prop:attract}
  Under the assumptions of Theorem~\textup{\ref{thm:pes=tss}},
  coexistence of two traits can only occur in the neighborhood of
  evolutionary singularities $x^*\in{\cal X}$ which are not
  repulsive, i.e. which
satisfy
\begin{equation}
    \label{eq:attract}
    \partial_{22}f(x^*;x^*)\geq\partial_{11}f(x^*;x^*).
  \end{equation}
  More precisely, for any neighborhood ${\cal U}$ of the set of
  evolutionary singularities satisfying~\textup{(\ref{eq:attract})},
  for all $\varepsilon$ small enough,
  $$
  \PP(\tau^\varepsilon<+\infty\mbox{\ and\
  }\mbox{\textup{Supp}}(Z^\varepsilon_{\tau^\varepsilon-})\not\in{\cal
    U})=0.
  $$
\end{prop}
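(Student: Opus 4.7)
The plan is to combine a local analysis of the fitness function near the resident trait with the deterministic-flow picture provided by Theorems~\ref{thm:convergence} and~\ref{thm:pes=tss}. First I would show that any coexistence event forces the resident trait to lie within $O(\varepsilon)$ of some evolutionary singularity; then I would argue that the TSS---which by Theorem~\ref{thm:pes=tss}(i) governs the PES up to time $\tau^\varepsilon$---cannot reach the $O(\varepsilon)$-neighborhood of a repulsive ES when started from a trait $x_0$ that is not itself an ES.

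For the first step, let $x=\mbox{Supp}(\tilde{Z}^{\varepsilon}_{\tau^\varepsilon-})$ be the resident trait and write the incoming mutant as $y=x+\varepsilon h$. Using $f(x;x)=0$ together with the identity $\partial_2 f(x;x)=-\partial_1 f(x;x)$ from~(\ref{eq:der-1-fitn}), second-order Taylor expansions give
\[
  f(y;x)=\partial_1 f(x;x)\,\varepsilon h+\tfrac{1}{2}\partial_{11}f(x;x)\,\varepsilon^2 h^2+O(\varepsilon^3),
\]
\[
  f(x;y)=-\partial_1 f(x;x)\,\varepsilon h+\tfrac{1}{2}\partial_{22}f(x;x)\,\varepsilon^2 h^2+O(\varepsilon^3).
\]
Because $h$ is a.s.\ bounded (its support lies in ${\cal X}-x$ and ${\cal X}$ is compact), simultaneous strict positivity of $f(y;x)$ and $f(x;y)$ forces the first-order contributions to be dominated by the second-order ones, hence $|\partial_1 f(x;x)|=O(\varepsilon)$. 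Under Assumption~(A''), the map $x\mapsto\partial_1 f(x;x)$ is smooth, and differentiating along the diagonal and using~(\ref{eq:der-2-fitn}) yields, at any ES $x^*$, the expansion $\partial_1 f(x;x)=\tfrac{1}{2}(\partial_{11}f-\partial_{22}f)(x^*;x^*)(x-x^*)+O((x-x^*)^2)$. Assuming the ES are non-degenerate (the generic case, in which $\partial_{11}f\neq\partial_{22}f$ at each ES), the implicit function theorem then gives $|x-x^*|=O(\varepsilon)$ for some ES $x^*$.

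For the second step, I exploit the fact that the mutation amplitude $\varepsilon h$ is a.s.\ bounded by $\varepsilon\,\mbox{diam}({\cal X})$. At a point $x$ at distance at least $\delta>0$ from every ES, continuity and non-degeneracy give $|\partial_1 f(x;x)|\geq c(\delta)>0$, and the Taylor expansion above shows that for $\varepsilon$ small enough the condition $f(y;x)>0$ forces $h$ to have the same sign as $\partial_1 f(x;x)$: the TSS jumps monotonically in the direction of the selection gradient. Near a repulsive ES $x^*$ (where $\partial_{11}f(x^*;x^*)>\partial_{22}f(x^*;x^*)$), the expansion of $\partial_1 f(x;x)$ just derived has the same sign as $x-x^*$, so the TSS jumps strictly away from $x^*$ on each of its sides. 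Hence, starting from a non-ES $x_0$, which lies at a positive distance from every ES as soon as $\varepsilon$ is small, the TSS a.s.\ never crosses $x^*$ and never enters any $O(\varepsilon)$-neighborhood of it.

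Assembling the two steps gives the proposition: given $\mathcal U$, choose $\varepsilon_0$ small enough that for every $\varepsilon<\varepsilon_0$ the $O(\varepsilon)$-neighborhood of every non-repulsive ES lies inside $\mathcal U$ and the monotonicity argument of the second step applies uniformly on the complement of $\mathcal U$. Then on $\{\tau^\varepsilon<\infty\}$ the trait $\mbox{Supp}(\tilde Z^{\varepsilon}_{\tau^\varepsilon-})$ is a.s.\ within $O(\varepsilon)$ of some ES (by the first step), and by the second step this ES cannot be repulsive, so $\mbox{Supp}(\tilde Z^{\varepsilon}_{\tau^\varepsilon-})\in\mathcal U$. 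The main technical obstacle I anticipate is making the monotonicity valid \emph{uniformly in time}: Theorem~\ref{thm:convergence} only gives convergence to the canonical equation on compact time intervals, whereas $\tau^\varepsilon$ may be arbitrarily large, so one must rule out low-probability long-time excursions towards a repulsive ES. The uniform lower bound on $|\partial_1 f(x;x)|$ away from ES, together with the a.s.\ boundedness of mutational jumps coming from the compactness of ${\cal X}$, should be the key ingredients that promote an ``in probability'' argument to the claimed \emph{probability-zero} statement.
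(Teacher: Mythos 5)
Your two-step skeleton (coexistence forces the resident trait close to an evolutionary singularity; the TSS cannot approach a repulsive ES because the selection gradient points outward there) is exactly the paper's, and your expansions, including $\partial_1 f(x;x)=\tfrac12\big(\partial_{11}f-\partial_{22}f\big)(x^*;x^*)(x-x^*)+O\big((x-x^*)^2\big)$, are correct. But both steps are calibrated at the wrong spatial scale, and this opens a genuine gap. In Step 1 you localize the resident to within $O(\varepsilon)$ of an ES, which costs you a non-degeneracy hypothesis ($\partial_{11}f\neq\partial_{22}f$ at every ES) that is not among the assumptions and rules out precisely the borderline singularities that the non-strict inequality~(\ref{eq:attract}) is meant to include. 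All that is needed, and all the paper proves, is that coexistence is impossible whenever the resident lies in the compact complement of a \emph{fixed} neighborhood of the ES set, where $|\partial_1 f(x;x)|$ is bounded below by continuity and compactness alone (this is the argument of Lemma~\ref{lem:I_T}~(2)); no implicit function theorem is required. Symmetrically, in Step 2 you claim the TSS never enters an $O(\varepsilon)$-neighborhood of a repulsive ES, but your own mechanism fails at that scale: once $|x-x^*|=O(\varepsilon)$, the first-order term $\partial_1 f(x;x)\,\varepsilon h$ is of the same order $\varepsilon^2$ as the second-order term $\tfrac12\partial_{11}f\,\varepsilon^2h^2$, so positivity of $f(y;x)$ no longer forces $h$ to point away from $x^*$ and inward jumps are not excluded. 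The paper instead erects a barrier at a \emph{fixed} distance $\eta_{x^*}$ from each repulsive ES: just outside ${\cal V}=(x^*-\eta_{x^*},x^*+\eta_{x^*})$ the gradient is bounded away from zero and points outward, mutational steps are of size $O(\varepsilon)\ll\eta_{x^*}$, and hence no mutant landing inside ${\cal V}$ born from a resident outside ${\cal V}$ has positive fitness. Recalibrating both of your steps to fixed neighborhoods makes the assembly go through and reproduces the paper's proof.

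The time-uniformity worry you flag at the end is not actually an obstacle, and the way it disappears is worth internalizing: the exclusion of the forbidden transitions is not an ``in probability on $[0,T]$'' statement inherited from Theorem~\ref{thm:convergence}, but a pathwise almost-sure property of the jump process $Z^\varepsilon$ itself. A mutant $y$ with $f(y;x)\leq 0$ contributes jump rate $[f(y;x)]_+=0$ in the generator~(\ref{eq:gene-PES}), so the PES almost surely never performs such a jump, on any time horizon and without any quantitative control of $\tau^\varepsilon$. Hence ``the TSS never enters ${\cal V}$'' and ``no coexistence occurs away from the ES set'' hold with probability one for every fixed small $\varepsilon$, which is exactly the probability-zero statement the proposition asserts; there is no ``in probability'' estimate to promote.
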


\begin{proof} Let us remark that an ES such that
\begin{equation}
\label{eq:repuls}
\partial_{11}f(x^*;x^*)+\partial_{12}f(x^*;x^*) >0.
\end{equation}
is always a repulsive point for the canonical equation, in the
sense that, for any solution $x(t)$ of the canonical equation
starting sufficiently close from $x^*$, the distance between
$x(t)$ and $x^*$ is non-decreasing in the neighborhood of time 0.
In other words, there exists a neighborhood ${\cal U}$ of $x^*$
such that no solution of the canonical equation starting out of
${\cal U}$ can enter ${\cal U}$. To this end, we remark that
\eqref{eq:repuls} implies that there exists $\eta_{x^*}$ with
\begin{itemize}
\item $\partial_1 g(x; x)>0\ $ if $\ x\in (x^*,x^*+\eta_{x^*}]$,
\item $\partial_1 g(x; x)<0\ $ if $\ x\in [x^*-\eta_{x^*}, x^*)$,
\end{itemize}
and conclude in view of \eqref{eq:n-morphic-canonical-equation}.

\noindent Observe that,
by~(\ref{eq:der-2-fitn}),~(\ref{eq:repuls}) is equivalent to
$\partial_{11}f(x^*;x^*)-\partial_{22}f(x^*;x^*)>0$.

\noindent Let $S$ be the set of repulsive ES and define $ {\cal
  V}=\cup_{x^*\in S} (x^*-\eta_{x^*},x^*-\eta_{x^*})$. Fix ${\cal
  U}$ as in the statement of Proposition~\ref{prop:attract} and assume
(without loss of generality) that ${\cal U}\cap{\cal V}=\emptyset$ and
$x\not\in{\cal U}\cup{\cal V}$. Let $[a,b]$ be any connected component
of ${\cal X}\setminus({\cal U}\cup{\cal V})$. Since $\partial_1
f(y,y)\not=0$ for all $y\in[a,b]$, reproducing the argument of the
proof of Theorem~\ref{thm:pes=tss} easily shows that coexistence never
happens in a monomorphic population with trait in ${\cal
  X}\setminus({\cal U}\cup{\cal V})$ if $\varepsilon$ is sufficiently
small. Similarly, for $\varepsilon$ sufficiently small, no mutant in
${\cal V}$ born from a monomorphic population with trait not belonging
to ${\cal V}$ has a positive fitness. Therefore, the TSS cannot drive
the population inside ${\cal V}$ starting from outside. Thus
Proposition~\ref{prop:attract} is clear.
\end{proof}

\subsection{Evolutionary branching criterion}
\label{sec:thm-br}

In this section we will prove a criterion of evolutionary
branching. We need the following last assumption.

\begin{description}
\item[\textmd{(A''')}] For any $x$ in the interior of ${\cal X}$,
  $\int_{\mathbb{R}_-}m(x,h)dh>0$ and $\int_{\mathbb{R}_+}m(x,h)dh>0$.
\end{description}

\subsubsection{Definition and main result}
 We first need to precisely define what we mean by
evolutionary branching.

\begin{defi}
  \label{def:br}
  Let $x^*$ be an ES. For all $\eta>0$,
  we call $\eta$-branching the event
  \begin{itemize}
  \item there exists $t_1>0$ such that the support of the PES at time $t_1$  is
    composed of a single point belonging to $[x^*-\eta,x^*+\eta]$
  \item there exists $t_2>t_1$ such that the support of the PES at time $t_2$ is
    composed of exactly 2 points distant of more than $\eta/2$
  \item between $t_1$ and $t_2$, the support of the PES is always a
    subset of $[x^*-\eta,x^*+\eta]$, and is always composed of at most 2
    traits, and has increasing (in time) diameter.
  \end{itemize}
\end{defi}
We only consider \emph{binary} evolutionary branching. We will
actually prove that the simultaneous subdivision of a single
branch into three branches (or more) is a.s.\ impossible. Note
that this notion of evolutionary branching requires the
coexistence of two traits, but also that these two traits diverge
from one another.

\bigskip
\noindent Our main result is the following.
\begin{thm}
  \label{thm:br}
  Assume~\textup{(A)},~\textup{(A')},~\textup{(A'')}, ~\textup{(A''')}  and
  either~\textup{(B)} or~\textup{(C)}. Assume also that
  $Z^\varepsilon_0=\bar{n}(x)\delta_x$ and that the canonical equation
  with initial
  condition $x$ converges to an ES $x^*$ in the interior of ${\cal X}$
  such that
  \begin{align}
    & \partial_{22}f(x^*;x^*)>\partial_{11}f(x^*;x^*)
    \label{eq:cond1} \\
    \mbox{and}\quad & \partial_{22}f(x^*;x^*)+\partial_{11}f(x^*;x^*)\not=0.
    \label{eq:cond2}
  \end{align}
  Then, for all sufficiently small $\eta$, there exists
  $\varepsilon_0>0$ such that for all $\varepsilon<\varepsilon_0$,
  \begin{description}
  \item[\textmd{(a)}] if $\partial_{11}f(x^*;x^*)>0$,
    $\PP^\varepsilon(\eta\mbox{-branching})=1$.
  \item[\textmd{(b)}] if $\partial_{11}f(x^*;x^*)<0$,
    $\PP^\varepsilon(\eta\mbox{-branching})=0$. Moreover,
    $$
    \PP^\varepsilon\big(\forall t\geq\theta_\eta^\varepsilon,\
    \mbox{\textup{Card}}
    (\mbox{\textup{Supp}}(\tilde{Z}^\varepsilon_{t}))\leq 2\
    \mbox{and}\
    \mbox{\textup{Supp}}(\tilde{Z}^\varepsilon_{t})
    \subset(x^*-\eta,x^*+\eta)\:\big)=1,
    $$
    where $\theta_\eta^\varepsilon$ has been defined
    in~\textup{(\ref{eq:def-theta})}.
  \end{description}
\end{thm}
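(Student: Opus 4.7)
The plan is to localize the analysis near $x^*$ by a second-order Taylor expansion of the fitness function $f$, and then to use Zeeman's classification of three-dimensional competitive Lotka-Volterra systems, together with a dimorphic analogue of the canonical equation \eqref{eq:n-morphic-canonical-equation}, to control the evolution of the PES inside $(x^*-\eta,x^*+\eta)$. By Theorem \ref{thm:pes=tss}(ii), for $\varepsilon$ small enough the PES is monomorphic on $[0,\theta^\varepsilon_\eta]$ and $\theta^\varepsilon_\eta<\tau^\varepsilon$ almost surely, so one may assume that at time $\theta^\varepsilon_\eta$ the rescaled PES is $\bar{n}(x_0)\delta_{x_0}$ with $x_0\in(x^*-\eta,x^*+\eta)$, and study the evolution from there.

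Using $f(x;x)=0$, $\partial_1f(x^*;x^*)=0$, and the identities \eqref{eq:der-1-fitn}--\eqref{eq:der-2-fitn}, a direct computation factorizes the second-order expansion as
\begin{equation*}
f(y;x)=\tfrac{1}{2}(y-x)\bigl[A(y-x^*)-B(x-x^*)\bigr]+O\bigl(|y-x^*|^3+|x-x^*|^3\bigr),
\end{equation*}
with $A=\partial_{11}f(x^*;x^*)$, $B=\partial_{22}f(x^*;x^*)$; the non-degeneracy $A+B\neq 0$ ensures that this quadratic term is not annihilated. Reading off the two linear factors gives immediately the signs of $f(y;x)$ and $f(x;y)$ and hence the local coexistence region: in case (a), $0<A<B$, it extends up to the diagonal and opens into a cone that widens away from it, while in case (b), $A<0$ with $B>A$, it is either empty (when $A+B<0$) or confined to pairs straddling $x^*$ whose distances to $x^*$ have ratio confined to a bounded interval determined by $A$ and $B$.

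For case (b), the plan is to show that any mutation invading the monomorphic resident either fails to coexist or lands in this restricted coexistence region; and that, once coexistence occurs, the evolution of the (at most) two residents, on the rescaled scale $t/\varepsilon^2$, is well-approximated by a dimorphic analogue of \eqref{eq:n-morphic-canonical-equation}, whose drift (read off from the factorization above) contracts the support toward $x^*$. Triple coexistence is ruled out by Proposition \ref{prop:C2->B} and by an inspection of Zeeman's diagrams, since the sign pattern produced by the quadratic expansion of $f$ never falls in classes 26, 29, 31 or 33. Therefore the support of $\tilde{Z}^\varepsilon$ stays inside $(x^*-\eta,x^*+\eta)$ with cardinality at most $2$ and never attains diameter $\eta/2$, preventing $\eta$-branching. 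For case (a) the symmetric argument goes through: \textup{(A''')} and $A>0$ guarantee that the first successful invasion from $x_0$ produces, with probability bounded away from $0$, a coexisting pair inside the coexistence wedge; the same dimorphic canonical equation now has $A>0$ in its drift and separates the two traits monotonically, reaching separation $\geq\eta/2$ within $(x^*-\eta,x^*+\eta)$ in a time of order $1$, and Zeeman's diagrams 7--12 ensure that any subsequent mutation either fails or simply replaces one of the two residents without producing a triple coexistence; thus $\eta$-branching occurs almost surely.

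The main obstacle is the rigorous extension of Theorem \ref{thm:convergence} to the dimorphic PES in the small window $(x^*-\eta,x^*+\eta)$: since $f$ vanishes on the diagonal, the relevant invasion probabilities are themselves of order $\varepsilon$, and one needs uniform (in $\varepsilon$) control on the balance between the deterministic drift of the dimorphic canonical equation and the stochastic fluctuations coming from the mutation-invasion steps. The hypothesis $A+B\neq 0$ is used precisely here, to prevent the cubic remainder from reversing the sign of the drift. The complementary delicate point is a uniform control of the rare excursions that could either push the dimorphic support out of the $\eta$-window before separation $\eta/2$ is reached (case (a)) or create a transient triple coexistence (case (b)); this relies on the quadratic structure of $f$ to quantify the invasion probabilities and on Zeeman's classification to exclude the dangerous sign patterns.
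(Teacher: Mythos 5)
Your static analysis (second-order expansion of $f$ near $x^*$, identification of the coexistence wedge, exclusion of classes 26, 29, 31, 33 via Zeeman) matches the paper's, but the dynamical core of your argument has a genuine gap. You reduce both cases to a ``dimorphic analogue of the canonical equation'' whose drift would contract or separate the support, and you yourself flag its rigorous derivation as the main unresolved obstacle. No such limit theorem is needed, and none is proved in the paper: the paper's argument is pathwise and jump-by-jump. Once the signs of the 2- and 3-dimensional fitnesses are pinned down near $x^*$ (Proposition~\ref{prop:regul} and the shape of $f(\cdot;x,y)$), every successful invasion of a dimorphic population $\{x,y\}$ by a mutant $z$ falls, for $a<0$, into classes 7 or 9 with $x<z<y$, so the post-invasion support has strictly smaller diameter; and for $a>0$ into classes 9--12 with $z\not\in[x,y]$, the mutant replacing the resident on its own side, so the diameter strictly increases. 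This monotonicity holds almost surely at every jump, which is exactly the strength needed for the probability-one claims; your route would require a uniform-in-$\varepsilon$ functional limit theorem for the dimorphic PES that you do not supply. Relatedly, in case (a) you only get coexistence ``with probability bounded away from $0$,'' which does not yield $\PP^\varepsilon(\eta\mbox{-branching})=1$; the paper obtains $\PP(\tau^\varepsilon<\infty)=1$ by combining the monotonicity of the TSS before $\tau^\varepsilon$ with a lower bound of order $\varepsilon^2\beta\kappa$ on the rate of jumps of size at least $\varepsilon\beta/2$ away from the $\kappa\varepsilon$-neighborhood of $x^*$ (using (A''')), so that the TSS must either enter the coexistence wedge or reach that neighborhood, from which the next macroscopic jump a.s.\ lands in the wedge.

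A secondary but substantive point: your remainder $O(|y-x^*|^3+|x-x^*|^3)$ is too weak. Since $f$ vanishes identically on the diagonal, the expansion must carry the factor $(y-x)$ into the error term, as in the paper's $o\big(|x-y|(|x-x^*|+|y-x^*|)\big)$ obtained by first factoring out $(x-y)$ (Lemma~\ref{lem:regul}) and only then expanding. With your additive cubic remainder, the sign of $f(y;x)$ is undetermined precisely when $|y-x|\ll|x-x^*|$, which is the generic regime here because mutation steps are of size $O(\varepsilon)$ while the resident may sit at distance up to $\eta$ from $x^*$; the factored remainder is also what underlies the $\kappa_0$-argument bounding the support diameter from below at the exit time in case (a).
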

This criterion appeared for the first time
in~\cite{metz-geritz-al-96} with an heuristic justification. We
see that, locally around $x^*$, one of the two following events
can occur almost surely: either there is binary evolutionary
branching and the two branches diverge monotonously, or there is
no evolutionary branching, and the population stays forever inside
any neighborhood of $x^*$. Coexistence can occur in this case, but
cannot drive the support of the population away from a small
neighborhood of $x^*$. We will actually prove that, in this case,
as soon as there is coexistence of two traits in the population,
 the diameter of the support of the PES can only decrease
until it reaches 0 (i.e.\ until the next time when the population
becomes monomorphic).

\medskip
\noindent We give in the following subsections a full proof of
this result. In Section~\ref{sec:regul}, we will prove regularity
results on the 2- and 3-dimensional fitness functions and give
their second order expansions in the neighborhood of evolutionary
singularities. A first corollary of this result is given in
Section~\ref{sec:BEB} where, using the results of M.-L.
Zeeman~\cite{zeeman-93} and Fig.~\ref{fig:10}, we will show that
no triple coexistence can occur in the neighborhood of
evolutionary singularities. Finally, a case by case study of the
zone of coexistence and of the signs of fitness functions in the
neighborhood of an evolutionary singularity will allow us to
conclude the proof in Section~\ref{sec:coex}.

\medskip \noindent
Before coming to the proof and in order to illustrate the
difference between coexistence and evolutionary branching, we
state a result that will be needed in the course of the proof of
Theorem~\ref{thm:br}. Its proof will be given in Subsection 4.3.4.
We recall that two traits $x$ and $y$ coexist if and only if
$f(x;y)>0$ and $f(y;x)>0$.

\begin{prop}
  \label{prop:coex}
  Assume~(A) and that $\lambda$, $\mu$ and $\alpha$ are ${\cal
    C}^2$. Let $x^*\in{\cal X}$ be any ES.
  \begin{description}
  \item[\textmd{(a)}] If $\partial_{11}f(x^*;x^*)+\partial_{22}f(x^*;x^*)>0$,
   then for all neighborhood ${\cal U}$
    of $x^*$, there exist $x,y\in{\cal U}$ that coexist.
  \item[\textmd{(b)}] If $\partial_{11}f(x^*;x^*)+\partial_{22}f(x^*;x^*)<0$,
  then there exists a neighborhood
    ${\cal U}$ of $x^*$ such that any $x,y\in{\cal U}$ do not coexist.
  \end{description}
\end{prop}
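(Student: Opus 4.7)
My plan is to read the proposition as a second-order fact about the invasion fitness around the evolutionary singularity $x^*$. Since $\lambda,\mu,\alpha$ are assumed $\mathcal{C}^2$, the function $f$ is $\mathcal{C}^2$ on ${\cal X}^2$. Setting $u=x-x^*$, $v=y-x^*$, $a=\partial_{11}f(x^*;x^*)$ and $b=\partial_{22}f(x^*;x^*)$, the ES conditions $\partial_1f(x^*;x^*)=\partial_2f(x^*;x^*)=0$ together with $f(x^*;x^*)=0$ eliminate the zeroth and first order terms, while \eqref{eq:der-2-fitn} gives $2\partial_{12}f(x^*;x^*)=-(a+b)$. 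A direct Taylor expansion and a one-line factorization then produce
\begin{equation*}
2f(y;x) = (v-u)(av-bu) + o(u^2+v^2), \qquad 2f(x;y) = (u-v)(au-bv) + o(u^2+v^2).
\end{equation*}

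For part~(a), assuming $x^*$ lies in the interior of ${\cal X}$ (the case of interest in Section~\ref{sec:thm-br}), I would exhibit coexisting pairs by the symmetric choice $x=x^*-\varepsilon$, $y=x^*+\varepsilon$, i.e.\ $(u,v)=(-\varepsilon,\varepsilon)$. Plugging into the two expansions above, both products reduce to $(a+b)\varepsilon^2+o(\varepsilon^2)$, which is strictly positive for all sufficiently small $\varepsilon>0$ when $a+b>0$. Thus $x$ and $y$ satisfy both $f(x;y)>0$ and $f(y;x)>0$, and lie in any prescribed neighborhood of $x^*$.

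For part~(b), the key idea is to examine the symmetrized quantity $g(u,v) := f(x^*+u;x^*+v) + f(x^*+v;x^*+u)$. By construction $g$ is $\mathcal{C}^2$, symmetric under $(u,v)\mapsto(v,u)$, and vanishes identically on the diagonal $\{u=v\}$. Differentiating $g(u,u)\equiv 0$ and invoking the exchange symmetry forces $\partial_1g(u,u)=\partial_2g(u,u)=0$ along the whole diagonal. A double application of Hadamard's division lemma in the variables $(u,w)$ with $w=v-u$ then yields a factorization
\begin{equation*}
g(u,v) = (v-u)^2\,h(u,v)
\end{equation*}
with $h$ continuous near the origin, and identifying the relevant Taylor coefficient gives $h(0,0) = \tfrac{1}{2}\partial_{22}g(0,0) = \tfrac{a+b}{2}$. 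If $a+b<0$, continuity of $h$ delivers a neighborhood ${\cal U}$ of $x^*$ on which $h<0$; consequently $f(x;y)+f(y;x)<0$ for every pair of distinct $x,y\in{\cal U}$, which rules out both fitnesses being simultaneously positive, hence precludes coexistence in ${\cal U}$.

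The only non-routine ingredient is the factorization $g(u,v)=(v-u)^2h(u,v)$ with continuous $h$. It rests on the $\mathcal{C}^2$ regularity of $g$ together with the vanishing of both $g$ and $\nabla g$ on the diagonal, and is a standard consequence of Hadamard's lemma applied twice (first to $\tilde g(u,w)=g(u,u+w)$, and then to the intermediate quotient, which itself vanishes at $w=0$). Once this factorization is in hand, the sign of $h(0,0)$ closes case~(b) by continuity, while the explicit symmetric construction closes case~(a).
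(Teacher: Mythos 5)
Your proof is correct, but it takes a genuinely different route from the paper's. Both arguments start from the same second-order expansion $2f(y;x)=(v-u)\bigl(a v-c u\bigr)+\ldots$ at $(x^*,x^*)$ (the paper gets it as Proposition~\ref{prop:regul}~(iii), with the sharper remainder $o\bigl(|x-y|(|x-x^*|+|y-x^*|)\bigr)$ obtained from Lemma~\ref{lem:regul}). From there the paper argues geometrically: by the implicit function theorem the zero set of $f(y;x)$ near $(x^*,x^*)$ is the diagonal together with a curve $\gamma$ tangent to $\{a(y-x^*)=c(x-x^*)\}$, these curves cut the neighborhood into four regions, and an eight-case inspection of the sign configurations of $(a,c)$ (Fig.~\ref{fig:coex}) decides in each case whether the region $\{f(y;x)>0,\ f(x;y)>0\}$ is nonempty. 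You sidestep the case analysis entirely: for~(a) the single anti-diagonal witness $(x^*-\varepsilon,x^*+\varepsilon)$ makes both fitnesses equal to $(a+c)\varepsilon^2+o(\varepsilon^2)$, and for~(b) the symmetrized sum $f(x;y)+f(y;x)=\frac{a+c}{2}(x-y)^2\,(1+o(1))$ is negative for all distinct nearby pairs, which forbids both fitnesses from being simultaneously positive. Your double application of Hadamard's lemma to get $g(u,v)=(v-u)^2h(u,v)$ with $h$ continuous is essential and correctly identified as the crux: a bare $o(u^2+v^2)$ remainder would not control the sign of $g$ near the diagonal, and this factorization plays exactly the role that Lemma~\ref{lem:regul} plays in the paper. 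The trade-off is that your argument, while shorter and free of the implicit function theorem and the case-by-case check, only delivers the existence/non-existence dichotomy, whereas the paper's full description of the coexistence region is reused later in the proof of Theorem~\ref{thm:br}. Your restriction to $x^*$ in the interior of ${\cal X}$ for part~(a) is a reasonable reading (it is the situation of Theorem~\ref{thm:br}, and the paper's two-dimensional region picture implicitly assumes the same).
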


\noindent
This shows that the criterion of evolutionary branching
($\partial_{11}f(x^*;x^*)>0$) is different from the criterion of
coexistence ($\partial_{11}f(x^*;x^*)+\partial_{22}f(x^*;x^*)>0$).
In particular, if one assumes as in Theorem~\ref{thm:br} that
$\partial_{22}f(x^*;x^*)>\partial_{11}f(x^*;x^*)$, the
evolutionary branching condition $\partial_{11}f(x^*;x^*)>0$
implies the coexistence criterion
$\partial_{11}f(x^*;x^*)+\partial_{22}f(x^*;x^*)>0$, as expected.

\subsubsection{Example}
Let us  come back to the example introduced in Subsection 2.2.

\medskip \noindent The fitness function is
\begin{align*} f(y;x)&= \lambda(y)-\alpha(y,x)\bar{n}(x) \\
&=  \exp\Big(-{y^2\over 2\sigma^2_b}\Big) -
\exp\Big(-{(x-y)^2\over 2\sigma^2_\alpha}\Big)\exp\Big(-{x^2\over
2\sigma^2_b}\Big).\end{align*}
 Computation gives  
\begin{equation*}
  \partial_1f(x^*;x^*)= -{x^*\over \sigma^2_b} \exp\Big(-{(x^*)^2\over
    2\sigma^2_b}\Big) =0 \Longleftrightarrow x^* = 0.
\end{equation*}
Moreover, $\
\partial_{11}f(0;0)= {1\over \sigma^2_\alpha}-{1\over
\sigma^2_b}\ $ and
 $\ \partial_{22}f(0;0)= {1\over \sigma^2_\alpha} + {1\over
\sigma^2_b}.$  Thus, the coexistence criterion of Proposition
\ref{prop:coex} (a) is always satisfied. We furthermore observe
that \eqref{eq:cond1} and \eqref{eq:cond2} hold, and that
$$
\partial_{11}f(0;0)>0 \Longleftrightarrow \sigma_\alpha <
\sigma_b.
$$
Then if $\sigma_\alpha < \sigma_b$, we have almost
surely branching and when $\sigma_\alpha > \sigma_b$, we have
only coexistence. This is consistent with Fig.~\ref{fig:ex2}
(a) and (b).

\subsubsection{Trait smoothness of fitnesses around evolutionary
  singularities}
\label{sec:regul}

The problem of local expansion of fitness functions has been already studied
in~\cite{durinx-metz-al-08} for general models. In this section, we
establish regularity and expansion results on our 2- and 3-dimensional
fitness functions in the neighborhood of evolutionary singularities.
To this aim, we need the following lemma.

\begin{lem}
  \label{lem:regul}
  Let $h(x,y,z)$ be a ${\cal C}^k$ function for $k\geq 1$ defined on
  ${\cal X}^3$ such that $h(x,x,z)=0$ for all $x,z\in{\cal X}$. Then,
  the function
  \begin{equation*}
    (x,y,z)\mapsto\frac{h(x,y,z)}{x-y}
  \end{equation*}
  can be extended on $\{x=y\}$ as a ${\cal C}^{k-1}$ function
  $\hat{h}(x,y,z)$ on ${\cal X}^3$ by setting
  $\hat{h}(x,x,z)=\partial_1 h(x,x,z)$ for all $x,z\in{\cal X}$.
\end{lem}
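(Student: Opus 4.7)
The plan is to give an explicit integral representation of $\hat h$ that makes both its agreement with $h(x,y,z)/(x-y)$ off the diagonal and its $\mathcal{C}^{k-1}$-regularity transparent. Since $h(y,y,z)=0$ for all $y,z\in{\cal X}$, the fundamental theorem of calculus applied along the segment from $y$ to $x$ in the first coordinate (which lies in ${\cal X}$ by the convexity assumption (A'1)) yields
\begin{equation*}
h(x,y,z) \;=\; h(x,y,z) - h(y,y,z) \;=\; (x-y)\int_0^1 \partial_1 h\bigl(y + t(x-y),\,y,\,z\bigr)\,dt.
\end{equation*}
This suggests defining
\begin{equation*}
\hat{h}(x,y,z) \;:=\; \int_0^1 \partial_1 h\bigl(y + t(x-y),\,y,\,z\bigr)\,dt,
\end{equation*}
which coincides with $h(x,y,z)/(x-y)$ whenever $x\neq y$ and which satisfies $\hat h(x,x,z) = \partial_1 h(x,x,z)$ on the diagonal, exactly matching the prescribed boundary values.

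To conclude, I would verify that $\hat h$ is of class $\mathcal{C}^{k-1}$ on ${\cal X}^3$ by invoking the standard parametric-integral smoothness theorem. Since $\partial_1 h$ is $\mathcal{C}^{k-1}$ by hypothesis and the map $(x,y)\mapsto y + t(x-y)$ is smooth and affine (with all partial derivatives bounded uniformly in $t\in[0,1]$), the integrand is of class $\mathcal{C}^{k-1}$ in $(x,y,z)$ uniformly in $t$. Iteratively applying Leibniz's rule for differentiation under the integral up to order $k-1$, together with the fact that integration is over the compact interval $[0,1]$, then produces continuous partial derivatives of $\hat h$ of every order $\leq k-1$ on all of ${\cal X}^3$.

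The argument is elementary and I expect no significant obstacle. The only point requiring mild attention is the behaviour of the derivatives across the diagonal $\{x=y\}$: rather than computing them as limits of difference quotients of the quotient $h/(x-y)$, the integral representation provides a single formula valid on the whole of ${\cal X}^3$, from which continuity of the derivatives up to order $k-1$ is automatic. This is also what ensures the prescribed value $\hat h(x,x,z)=\partial_1 h(x,x,z)$ is attained as a genuine continuous extension rather than merely as a pointwise definition.
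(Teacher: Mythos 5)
Your proof is correct and follows exactly the paper's argument: both write $h(x,y,z)=(x-y)\int_0^1\partial_1 h(y+u(x-y),y,z)\,du$ via Taylor's formula with integral remainder and observe that the integral defines a ${\cal C}^{k-1}$ function on all of ${\cal X}^3$ agreeing with the prescribed diagonal value. Your additional remarks on differentiating under the integral sign and on the role of convexity of ${\cal X}$ merely flesh out details the paper leaves implicit.
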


\begin{proof}
  Taylor's formula with integral remainder yields
  $$
  \frac{h(x,y,z)}{x-y}=\int_0^1\partial_1 h(y+(x-y)u,y,z)du
  $$
  for all $x\not=y$. The right-hand side also has a sense for $x=y$
  and defines a ${\cal C}^{k-1}$ function on ${\cal X}^3$.
\end{proof}

\bigskip
\noindent Let $x^*\in{\cal X}$ be an ES as in the statement of
Theorem~\ref{thm:br}. By Assumptions~(A) and~(A''), the
2-dimensional fitness function $f(y;x)$ defined
in~(\ref{eq:fitness-d=1}) is well-defined for all $x,y\in{\cal X}$
and is a ${\cal C}^3$ function. We extend the definition of  the
3-dimensional fitness function
$$
f(z;x,y)=r(z)-\alpha(z,x)\bar{n}_1(x,y)-\alpha(z,y)\bar{n}_2(x,y),
$$
where $\bar{n}_i(x,y)$, $i=1,2$, are defined in~(\ref{n1comp})
and~(\ref{n2comp}) to all $x,y\in{\cal X}$  such that
$$
\alpha(x,x)\alpha(y,y)-\alpha(x,y)\alpha(y,x)\neq 0.
$$

\noindent We will also use the notation
\begin{equation}
  \label{eq:def-a-c}
  a=\partial_{11}f(x^*;x^*)\quad\mbox{and}\quad c=\partial_{22}f(x^*;x^*).
\end{equation}
Note that, by \eqref{eq:der-2-fitn},
\begin{equation}
  \label{eq:pf-regul-0}
  \partial_{12}f(x^*;x^*)=-\frac{a+c}{2}.
\end{equation}

\begin{prop}
  \label{prop:regul}
  Under the assumptions of Theorem~\textup{\ref{thm:br}},
  the following properties hold.
  \begin{description}
  \item[\textmd{(i)}] For all $x,y\in{\cal X}$ in a neighborhood of
    $x^*$,
    $$
    x\not= y\quad\Longrightarrow\quad
    \alpha(x,x)\alpha(y,y)\not=\alpha(x,y)\alpha(y,x).
    $$
    This implies in particular that $\bar{\mathbf{n}}(x,y)$ and
    $f(\cdot;x,y)$ are well-defined for such $x,y$.
  \item[\textmd{(ii)}]  When $x,y\rightarrow x^*$ in
    such a way that $x\not=y$, and  for all $z\in{\cal X}$,
    \begin{align}
     & \bar{n}_1(x,y)+\bar{n}_2(x,y)\ \longrightarrow\
      \bar{n}(x^*)=\frac{r(x^*)}{\alpha(x^*,x^*)}
      ;\label{eq:lim-barn}\\
     & f(z;x,y)\ \longrightarrow\ f(z;x^*). \label{eq:lim-f3}
    \end{align}
  \item[\textmd{(iii)}] With the notation~\textup{(\ref{eq:def-a-c})},
    as $x,y\rightarrow x^*$,
    \begin{equation}
      \label{eq:dl-f2}
      f(y; x)=\frac{1}{2}(x-y)\big(c(x-x^*)-a(y-x^*)\big)
      +o\big(|x-y|\,(|x-x^*|+|y-x^*|)\big).
    \end{equation}
  \item[\textmd{(iv)}] The function $f(z;x,y)$ can be extended as a
    ${\cal C}^2$ function on $\{(x,y,z):z\in{\cal X},x,y\in{\cal U}\}$ for
    some neighborhood ${\cal U}$ of $x^*$ in ${\cal X}$. Still
    denoting by $f(z;x,y)$ the extended function, as $x,y\rightarrow
    x^*$,
    \begin{equation}
      \label{eq:dl-f3}
      f(z;x,y)=\frac{a}{2}(z-x)(z-y)+o\big(|z-x|\,|z-y|\big).
    \end{equation}
  \end{description}
\end{prop}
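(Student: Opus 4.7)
The proof is essentially computational, relying on systematic Taylor expansion around the diagonal $\{x=y\}$ and repeated use of Lemma~\ref{lem:regul}. Throughout, let $\phi(x,y):=\alpha(x,x)\alpha(y,y)-\alpha(x,y)\alpha(y,x)$ be the common denominator in \eqref{n1comp}--\eqref{n2comp} and $N_i(x,y)$ the corresponding numerators; by~(A''), these are $C^4$ functions on ${\cal X}^2$. For~(i), one checks by direct differentiation that $\phi(x,x)=0$ and $\partial_y\phi(x,x)=0$ for all $x$, so $\phi$ has a double zero on the diagonal. Two applications of Lemma~\ref{lem:regul} (with the $z$-variable absent) produce a $C^2$ function $\hat\phi$ satisfying $\phi(x,y)=(x-y)^2\hat\phi(x,y)$ and $\hat\phi(x,x)=\tfrac12\partial_{yy}\phi(x,x)=\alpha(x,x)\partial_{12}\alpha(x,x)-\partial_1\alpha(x,x)\partial_2\alpha(x,x)$. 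The crucial identification, obtained by substituting the chain-rule expressions of $a$ and $c$ in terms of derivatives of $r$ and $\alpha$, together with $r'(x^*)=\partial_1\alpha(x^*,x^*)\bar{n}(x^*)$ (from $\partial_1 f(x^*;x^*)=0$) and $\bar{n}'(x^*)=-\partial_2\alpha(x^*,x^*)\bar{n}(x^*)/\alpha(x^*,x^*)$ (from differentiating $\alpha(x,x)\bar{n}(x)=r(x)$), is
\begin{equation*}
  \hat\phi(x^*,x^*)=\frac{\alpha(x^*,x^*)(a+c)}{2\bar{n}(x^*)},
\end{equation*}
which is nonzero by hypothesis~\eqref{eq:cond2}; (i) follows by continuity.

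For~(ii), the same mechanism applies to $N_1+N_2=r(x)[\alpha(y,y)-\alpha(y,x)]-r(y)[\alpha(x,y)-\alpha(x,x)]$, which is symmetric in $(x,y)$, vanishes on the diagonal, and has a vanishing $\partial_y$-derivative there; Lemma~\ref{lem:regul} then gives $\widehat{N_1+N_2}\in C^2$ with $N_1+N_2=(x-y)^2\widehat{N_1+N_2}$, and a computation parallel to the one above yields $\widehat{N_1+N_2}(x^*,x^*)=\bar{n}(x^*)\hat\phi(x^*,x^*)$, which gives \eqref{eq:lim-barn} at once. For \eqref{eq:lim-f3} I decompose
\begin{equation*}
  f(z;x,y)=r(z)-\alpha(z,x)(\bar{n}_1+\bar{n}_2)-(y-x)\tilde\alpha(z;x,y)\bar{n}_2,\qquad\tilde\alpha(z;x,y):=\int_0^1\partial_2\alpha(z,x+t(y-x))\,dt,
\end{equation*}
and observe that $(y-x)\bar{n}_2=-\hat{N}_2(x,y)/\hat\phi(x,y)$, where $N_2=(x-y)\hat{N}_2$ is the Lemma~\ref{lem:regul} factorisation. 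A direct computation gives $\hat{N}_2(x^*,x^*)=r(x^*)\partial_1\alpha(x^*,x^*)-r'(x^*)\alpha(x^*,x^*)=-\alpha(x^*,x^*)\partial_1 f(x^*;x^*)=0$, so the last term of the decomposition vanishes in the limit and $f(z;x,y)\to r(z)-\alpha(z,x^*)\bar{n}(x^*)=f(z;x^*)$.

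For~(iii), use $f(x;x)=0$ to write $f(y;x)=(y-x)\int_0^1\partial_1 f(x+t(y-x),x)\,dt$ and Taylor-expand the integrand to first order around $(x^*,x^*)$ using $\partial_1 f(x^*;x^*)=0$, $\partial_{11}f(x^*;x^*)=a$, and $\partial_{12}f(x^*;x^*)=-(a+c)/2$ from~\eqref{eq:pf-regul-0}; integration in $t$ and multiplication by $(y-x)$, combined with uniform control of the remainder guaranteed by the $C^3$ regularity from~(A''), yields~\eqref{eq:dl-f2} after algebraic rearrangement. For~(iv), the decomposition used in the proof of~(ii),
\begin{equation*}
  f(z;x,y)=r(z)-\alpha(z,x)\frac{\widehat{N_1+N_2}(x,y)}{\hat\phi(x,y)}-\tilde\alpha(z;x,y)\frac{\hat{N}_2(x,y)}{\hat\phi(x,y)},
\end{equation*}
provides a $C^2$ extension of $f$ to any neighborhood of $(x^*,x^*,x^*)$ on which $\hat\phi\neq 0$, and by the convergence in~(ii) it coincides with $f(z;x^*)$ at $(x,y)=(x^*,x^*)$. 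The extended function $F:=f(\,\cdot\,;x,y)$ vanishes at $z=x$ and $z=y$ (these are exactly the equilibrium equations defining $\bar{n}_1,\bar{n}_2$), so Newton's divided-difference identity gives $f(z;x,y)=(z-x)(z-y)F[x,y,z]$, and Peano's form of the second divided difference $F[x,y,z]=\tfrac12\partial_{zz}f(\eta;x,y)$ for some $\eta$ in the convex hull of $\{x,y,z\}$ allows one to pass to the limit $x,y,z\to x^*$ and obtain $F[x,y,z]\to\tfrac12\partial_{11}f(x^*;x^*)=a/2$, proving~\eqref{eq:dl-f3}.

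The main obstacle is the algebraic bookkeeping in the identification $\hat\phi(x^*,x^*)=\alpha(x^*,x^*)(a+c)/(2\bar{n}(x^*))$: this is what makes hypothesis~\eqref{eq:cond2} exactly the non-degeneracy condition needed both for the denominator in~(i) and for the limits in~(ii) and~(iv). A secondary subtlety is that, in~(iv), $\bar{n}_1$ and $\bar{n}_2$ individually diverge as $(x,y)\to(x^*,x^*)$, and yet the particular combination appearing in $f(z;x,y)$ remains $C^2$; isolating the smooth quantities $\bar{n}_1+\bar{n}_2$ and $(y-x)\bar{n}_2$ in the decomposition above is what makes this visible and cleanly implements the extension across $\{x=y\}$.
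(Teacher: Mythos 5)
Your proof is correct and follows essentially the same route as the paper's: repeated use of Lemma~\ref{lem:regul} to factor powers of $(x-y)$ out of the numerators and the denominator $\alpha(x,x)\alpha(y,y)-\alpha(x,y)\alpha(y,x)$, the same key algebraic identity showing that the non-degeneracy of that denominator is exactly $a+c\neq 0$ (your $\hat\phi(x^*,x^*)=\alpha(x^*,x^*)(a+c)/(2\bar{n}(x^*))$ is the paper's $\alpha^2(x^*,x^*)(a+c)=2r(x^*)\big(\alpha\partial_{12}\alpha-\partial_1\alpha\partial_2\alpha\big)$), and the same Taylor expansions at the evolutionary singularity. The only harmless variations are your splitting of $f(z;x,y)$ into the smooth pieces $\bar{n}_1+\bar{n}_2$ and $(y-x)\bar{n}_2$ rather than the paper's single quotient, and your use of Newton's divided differences in~(iv) where the paper instead applies Lemma~\ref{lem:regul} twice more to $f(\cdot\,;x,y)$ and identifies the coefficient $a$ by a second-order Taylor expansion together with~(\ref{eq:lim-f3}).
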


\begin{proof}
  Let $D(x,y):=\alpha(x,x)\alpha(y,y)-\alpha(x,y)\alpha(y,x)$. It
  follows from Lemma~\ref{lem:regul} that $D(x,y)/(x-y)$ can be
  extended on ${\cal X}^2$ as a ${\cal C}^3$ function, which has value
  $$
  \partial_1\alpha(x,x)\alpha(x,x)+\partial_2\alpha(x,x)\alpha(x,x)
  -\partial_1\alpha(x,x)\alpha(x,x)-\alpha(x,x)\partial_2\alpha(x,x)=0
  $$
  at the point $(x,x)$. Therefore, Lemma~\ref{lem:regul} can be
  applied once more to prove that $D(x,y)/(x-y)^2$ can be extended as
  a ${\cal C}^2$ function $\hat{D}(x,y)$ on ${\cal X}^2$. Hence, an
  elementary computation involving the second-order Taylor expansion
  of $D(x,y)$ yields that
  $$
  D(x,y)=(x-y)^2\big( \alpha(x^*,x^*) \partial_{12} \alpha(x^*,x^*)
  -\partial_1 \alpha(x^*,x^*) \partial_2 \alpha(x^*,x^*)\big) + o(|x-y|^2).
  $$
  Thus, Point~(i) follows from the fact that
  $\alpha(x^*,x^*)\partial_{12}\alpha(x^*,x^*)
  \not=\partial_1\alpha(x^*,x^*)\partial_2\alpha(x^*,x^*)$, which is a
  consequence of~(\ref{eq:cond2}). Indeed, an elementary computation
  shows that
  \begin{gather*}
    a=r''(x^*)
    -r(x^*)\frac{\partial_{11}\alpha(x^*,x^*)}{\alpha(x^*,x^*)} \\
    \begin{aligned}
    & \mbox{and}\quad c=-r''(x^*)
    +2r'(x^*)\frac{\partial_1\alpha(x^{*},x^{*})}{\alpha(x^*,x^*)} \\ &
    +r(x^*)\frac{\alpha(x^*,x^*)
      \big(\partial_{11}\alpha(x^*,x^*)+2\partial_{12}\alpha(x^*,x^*)\big)
      -2\partial_1\alpha(x^*,x^*)
      \big(\partial_1\alpha(x^*,x^*)+\partial_2\alpha(x^*,x^*)\big)}
    {\alpha(x^*,x^*)^2}.
    \end{aligned}
  \end{gather*}
  Using the fact that
  \begin{equation}
    \label{eq:pf-regul-1}
    r'(x^*)=r(x^*)\frac{\partial_1\alpha(x^*,x^*)}{\alpha(x^*,x^*)}
  \end{equation}
  since $x^*$ is an ES, we have that
  $$
  \alpha^2(x^*,x^*)(a+c)=2r(x^*)
  \big(\alpha(x^*,x^*)\partial_{12}\alpha(x^*,x^*)
  -\partial_1\alpha(x^*,x^*)\partial_2\alpha(x^*,x^*)\big).
  $$
  Hence,
  $$
  \alpha(x^*,x^*)\partial_{12}\alpha(x^*,x^*)
  -\partial_1\alpha(x^*,x^*)\partial_2\alpha(x^*,x^*)\not=0\quad
  \Longleftrightarrow\quad a+c\not=0.
  $$
  In particular, this implies that the function $\hat{D}(x,y)$ is
  non-zero in a neighborhood of $x^*$.

  \noindent For Point~(ii), observe that
  $$
  \bar{n}_1(x,y)+\bar{n}_2(x,y)=\frac{r(x)\frac{\alpha(y,y)-\alpha(y,x)}{x-y}
    +r(y)\frac{\alpha(x,x)-\alpha(x,y)}{x-y}}{(x-y)\hat{D}(x,y)}.
  $$
  By the proof of Lemma~\ref{lem:regul}, the numerator can be extended
  as a ${\cal C}^3$ function $h(x,y)$ by setting
  $$
  h(x,y)=-r(x)\int_0^1\partial_2\alpha(y,y+(x-y)u)du
  +r(y)\int_0^1\partial_2\alpha(x,y+(x-y)u)du
  $$
  for all $x,y\in{\cal X}$. In particular, $h(x,x)=0$ for all
  $x\in{\cal X}$. Therefore, Lemma~\ref{lem:regul} can be applied once
  more to prove that $\bar{n}_1(x,y)+\bar{n}_2(x,y)$ can be extended
  as a ${\cal C}^2$ function in the neighborhood of $x^*$ and that
  $$
  \lim_{x,y\rightarrow
    x^*,\,x\not=y}\bar{n}_1(x,y)+\bar{n}_2(x,y)=\frac{\frac{\partial h}{\partial
      x}(x^*,x^*)}{\hat{D}(x^*,x^*)}=\frac{r(x^*)\partial_{12}\alpha(x^*,x^*)
    -r'(x^*)\partial_2\alpha(x^*,x^*)}
  {\alpha(x^*,x^*)\partial_{12}\alpha(x^*,x^*)
    -\partial_1\alpha(x^*,x^*)\partial_2\alpha(x^*,x^*)}.
  $$
  Hence,~(\ref{eq:lim-barn}) and then (\ref{eq:lim-f3}) follow
  from~(\ref{eq:pf-regul-1}).

\medskip
  \noindent Point~(iii) is obtained from the fact that $f(x; x)=0$, from
  Lemma~\ref{lem:regul} and from the second-order Taylor expansion of
  $f(y; x)$. In this computation, one must use the fact that $x^*$ is
  an ES and~(\ref{eq:pf-regul-0}).

  \medskip \noindent The fact that $f(z;x,y)$ is ${\cal C}^2$ in ${\cal U}\times{\cal
    U}\times{\cal X}$ can be proven exactly as the regularity of
  $\bar{n}_1(x,y)+\bar{n}_2(x,y)$ above, observing that
  $$
  f(z;x,y)=r(z)-\frac{r(x)\frac{\alpha(z,x)\alpha(y,y)-\alpha(z,y)\alpha(y,x)}{x-y}
    +r(y)\frac{\alpha(z,y)\alpha(x,x)-\alpha(z,x)\alpha(x,y)}{x-y}}
  {(x-y)\hat{D}(x,y)}.
  $$
  Therefore, using the fact that $f(x;x,y)=f(y;x,y)=0$,
  Lemma~\ref{lem:regul} can be applied twice to prove that
  $$
  f(z;x,y)=\frac{\gamma}{2}(z-x)(z-y)+o(|z-x|\,|z-y|)
  $$
  for some constant $\gamma\in\RR$. The second-order Taylor expansion
  of $f(z;x,y)$ shows that $\gamma=\partial_{11}
  f(x^*;x^*,x^*)$. Now, because of~(\ref{eq:lim-f3}),
  $
  \ \partial_{11}f(z;x^*,x^*)=\partial_{11}f(z;x^*)
  $
  for all $z\in{\cal X}$. Hence $\gamma=a$, which ends the proof of
  Point~(iv).
\end{proof}

\begin{rem}
  \label{rem:regul}
  Let us remark that, if $x^*$ is not an evolutionary singularity,
  Point~\textup{(ii)} of Proposition~\textup{\ref{prop:regul}} need
  not to be true anymore, which may be surprising for the intuition
  and which has been a source of errors in some biological works.

  Moreover, if $x^*$ is an ES but Assumption~\textup{(\ref{eq:cond2})}
  ($a+c\not=0$) is not true, Point~\textup{(ii)} of
  Proposition~\textup{\ref{prop:regul}} may also fail.
  Indeed, in the case where
  $\alpha(x,x)\partial_{12}\alpha(x,x)
  \not=\partial_1\alpha(x,x)\partial_2\alpha(x,x)$
  for $x\not=x^*$,
  \begin{multline*}
    \bar{n}_1(x,x)+\bar{n}_2(x,x)=\frac{r(x)\partial_{12}\alpha(x,x)
      -r'(x)\partial_2\alpha(x,x)}{\alpha(x,x)\partial_{12}\alpha(x,x)
      -\partial_1\alpha(x,x)\partial_2\alpha(x,x)} \\
    =\frac{r(x^*)\big(\partial_{112}\alpha(x^*,x^*)
      +\partial_{122}\alpha(x^*,x^*)\big)
      -r'(x^*)\partial_{22}\alpha(x^*,x^*)
      -r''(x^*)\partial_2\alpha(x^*,x^*)+o(1)}
    {\alpha(x^*,x^*)\big(\partial_{112}\alpha(x^*,x^*)
      +\partial_{122}\alpha(x^*,x^*)\big)
      -\partial_2\alpha(x^*,x^*)\partial_{11}\alpha(x^*,x^*)
      -\partial_1\alpha(x^*,x^*)\partial_{22}\alpha(x^*,x^*)+o(1)}
  \end{multline*}
  as $x\rightarrow x^*$. This expression involves $r''(x^*)$, whose
  value is not imposed by the assumptions. Therefore, changing the
  function $r$ in such a way that $r(x^*)$ and $r'(x^*)$ are fixed but
  $r''(x^*)$ changes also changes the value of $\lim_{x,y\rightarrow
    x^*}\bar{n}_1(x,y)+\bar{n}_2(x,y)$.
\end{rem}

\subsubsection{On triple coexistence in the neighborhood of $x^*$}
\label{sec:BEB} Points~(iii) and~(iv) of
Proposition~\ref{prop:regul} allow one to determine the signs of
the 2- and 3-dimensional fitnesses in a trimorphic population with
traits $x,y,z$ close to $x^*$.  Combining this with the
classification of Zeeman~\cite{zeeman-93} (see
Section~\ref{sec:triple} and Figure \ref{fig:10}) gives the
 following corollary.

\begin{cor}
  \label{cor:BEB}
  For all ES $x^*$ satisfying the assumptions of
  Theorems~\textup{\ref{thm:br}} and such that
  $\partial_{11}f(x^*;x^*)\not= 0$, there exists a neighborhood ${\cal
    U}$ of of $x^*$ such that, for all distinct $x,y,z\in{\cal U}$,
  $(x,y,z)\not\in C_{coex}$, where $C_{coex}$ is defined
  in~\textup{(\ref{eq:def-Cn})}.
\end{cor}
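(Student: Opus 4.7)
The plan is to combine the Taylor expansions of the fitness functions near $x^*$ from Proposition \ref{prop:regul} with the characterization of $C_{coex}$ via properties (P1) and (P2) recalled in Section \ref{sec:triple}. By Proposition \ref{prop:regul}(i) one fixes a neighborhood ${\cal U}$ of $x^*$ on which the 3-dimensional fitnesses $f(\cdot;x,y)$ are well-defined for distinct $x,y\in{\cal U}$. Writing $X=x-x^*$, $Y=y-x^*$, $Z=z-x^*$, $a=\partial_{11}f(x^*;x^*)\neq 0$ and $c=\partial_{22}f(x^*;x^*)$ (with $c>a$ and $a+c\neq 0$), parts (iii) and (iv) of Proposition \ref{prop:regul} yield leading-order expansions $f(y;x)=\frac{1}{2}(X-Y)(cX-aY)+o(\cdot)$ and $f(z;x,y)=\frac{a}{2}(Z-X)(Z-Y)+o(\cdot)$, together with the analogous expansions of the seven other fitnesses obtained by permuting $x,y,z$. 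The ${\cal C}^3$-regularity coming from Assumption (A'') ensures that, shrinking ${\cal U}$ if needed, the sign of each fitness is determined by its leading-order quadratic term uniformly on ${\cal U}$.

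The key algebraic fact is the identity
\begin{equation*}
f(z;x,y)\,f(y;x,z)\,f(x;y,z)=-\frac{a^3}{8}\bigl[(X-Y)(Y-Z)(Z-X)\bigr]^2+o(\cdot),
\end{equation*}
whose leading-order sign is $-\mbox{sgn}(a)\neq 0$. Since $(x,y,z)\in C_{coex}$ requires, by Section \ref{sec:triple}, not only that $x,y$ coexist and $f(z;x,y)>0$, but also that (P1) $f(x;z),f(z;x),f(y;x,z)$ share a common sign and (P2) $f(y;z),f(z;y),f(x;y,z)$ share a common sign, the strategy is to contradict one of (P1), (P2) by an explicit sign analysis. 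The argument splits on $\mbox{sgn}(a)$: if $a<0$, then $f(z;x,y)>0$ forces $z$ strictly between $x$ and $y$ via the leading term of $f(z;x,y)$, so that $f(y;x,z),f(x;y,z)<0$ at leading order; (P1) and (P2) then demand mutual exclusion of both pairs $(x,z)$ and $(y,z)$, which combined with coexistence of $(x,y)$ and the condition $c>a$ can be shown to be algebraically impossible in the relevant sector of $(X,Y,Z)$-space. When $a>0$ (hence $c>a>0$), the condition $f(z;x,y)>0$ places $z$ outside the interval $[x,y]$, and a symmetric case analysis using the same expansions likewise violates (P1) or (P2).

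The main difficulty lies in the bookkeeping of subcases, organized according to the signs of $X,Y,Z$ and the positions of the ratios $Y/X$ and $Z/X$ relative to the critical values $a/c$ and $c/a$ that control the signs of $cX-aY$, $cY-aX$, and their analogues. Condition \eqref{eq:cond1} ($c>a$) is essential here, as it fixes the ordering of these critical ratios and thereby delimits the coexistence sector for $(X,Y)$; condition \eqref{eq:cond2} ($a+c\neq 0$) enters via Proposition \ref{prop:regul}(i) to guarantee that the 3-dim fitnesses are defined near the diagonal. In some subcases (for instance when $a$ and $c$ are both negative), coexistence of $(x,y)$ near $x^*$ is itself impossible and the corollary is trivial; in the remaining subcases the sign incompatibility with (P1) or (P2) falls out from an elementary comparison of the quadratic leading terms. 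No further machinery beyond Proposition \ref{prop:regul} and the Zeeman-based characterization of $C_{coex}$ recalled in Section \ref{sec:triple} is required.
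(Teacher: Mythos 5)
Your overall strategy is the same as the paper's: use the expansions of Proposition~\ref{prop:regul}~(iii)--(iv) to pin down the signs of the two- and three-dimensional fitnesses near $x^*$, then contradict the characterization of $C_{coex}$ through (P1)/(P2) (equivalently, through the Zeeman diagrams of Fig.~\ref{fig:10}), splitting on the sign of $a$. The conclusions you draw from the three-dimensional fitness (for $a<0$, $z$ strictly between $x$ and $y$ and $f(x;y,z),f(y;x,z)<0$; for $a>0$, $z\notin[x,y]$ with opposite signs for the other two trimorphic fitnesses) are exactly those of the paper, and your product identity $f(z;x,y)f(y;x,z)f(x;y,z)=-\frac{a^3}{8}[(X-Y)(Y-Z)(Z-X)]^2+o(\cdot)$ is correct at leading order, though it only recovers information already contained in the parabola shape of $f(\cdot;x,y)$.

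There is, however, a genuine gap in the step you rely on to finish: the claim that, after shrinking ${\cal U}$, ``the sign of each fitness is determined by its leading-order quadratic term uniformly on ${\cal U}$.'' This is true for the three-dimensional fitness, because the error in \eqref{eq:dl-f3} is $o(|z-x|\,|z-y|)$, i.e.\ small \emph{relative to the leading term} wherever that term is nonzero. It is false for the two-dimensional fitness: the error in \eqref{eq:dl-f2} is $o\big(|x-y|(|x-x^*|+|y-x^*|)\big)$, not $o\big(|x-y|\,|c(x-x^*)-a(y-x^*)|\big)$, so near the curve $\gamma$ (where $cX-aY$ is small compared to $|X|+|Y|$) the remainder can dominate and the leading quadratic term does not decide the sign. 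Since the corollary must hold for \emph{all} distinct $x,y,z\in{\cal U}$, including configurations where one of the pairs $(x,z)$ or $(y,z)$ sits arbitrarily close to $\gamma$ or $\gamma^s$, your ``elementary comparison of the quadratic leading terms'' breaks down precisely in the delicate subcases, and the case analysis you defer to ``bookkeeping'' is not actually carried out. The paper circumvents this by a different mechanism: Lemma~\ref{lem:regul} gives the exact identity $\frac{\partial}{\partial x}\big(f(x;y)/(y-x)\big)=-\int_0^1 u\,\partial_{11}f(y+u(x-y);y)\,du$, whose sign is that of $-a$ uniformly near $x^*$; the strict monotonicity of $w\mapsto f(w;y)/(y-w)$ then converts the hypotheses $f(x;y)\geq 0$, $f(y;x)\geq 0$ into \emph{strict} inequalities such as $f(z;y)>0$ and $f(z;x)>0$ (or $f(y;z)>0$), with no need to locate $(x,z)$ relative to $\gamma$. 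You should either import this monotonicity argument or otherwise control the remainder in \eqref{eq:dl-f2} along $\gamma$; as written, the proof is a plan whose decisive inequalities are asserted rather than established.
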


\begin{proof} Let us assume for simplicity that $x^*=0$. We shall
  distinguish between the cases $a>0$ and $a<0$, and prove in each
  case that the fitnesses cannot have any of the sign configuration
  corresponding to the classes 26, 29, 31 and 33 in the neighborhood
  of $x^*$. Since all these classes contain the pattern of
  Fig.~\ref{fig:pattern}, we can assume without loss of generality
  that $f(x;y)\geq 0$, $f(y;x)\geq 0$, $f(z;x,y)\geq 0$ and $x<y$.

  \noindent Consider first the case $a<0$. It follows from
  Proposition~\ref{prop:regul}~(iv) that the function
  $f(\cdot;\cdot,\cdot)$ has the shape of Fig.~\ref{fig:shape-f3}~(a)
  in the neighborhood of $x^*$. In particular, this implies that
  $x<z<y$, $f(z;x,y)>0$, $f(x;y,z)<0$ and $f(y;x,z)<0$ as soon as
  $x,y,z$ are sufficiently close to $x^*$. In view of
  Fig.~\ref{fig:10}, these conditions are incompatible with classes 31
  and 33.  Moreover, $\partial_{11}f(x;y)<0$ for all $x,y$
  sufficiently close to $x^*$. Therefore, by Lemma~\ref{lem:regul},
\begin{equation}
  \label{eq:pf-BEB}
  \frac{\partial}{\partial
    x}\Big(\frac{f(x;y)}{y-x}\Big)=-\int_0^1u\partial_{11}f(y+u(x-y);y)du
\end{equation}
is positive for all $x,y$ sufficiently close to $x^*$.  Hence,
since $x<z<y$, we have $f(z;y)/(y-z)>f(x;y)/(y-x)\geq 0$ and thus
$f(z;y)>0$. Similarly, $f(z;x)>0$. Together with $f(z;x,y)>0$,
these conditions are incompatible with classes 26 and 29. This
ends the proof in the case where $a<0$.

\begin{figure}[h]
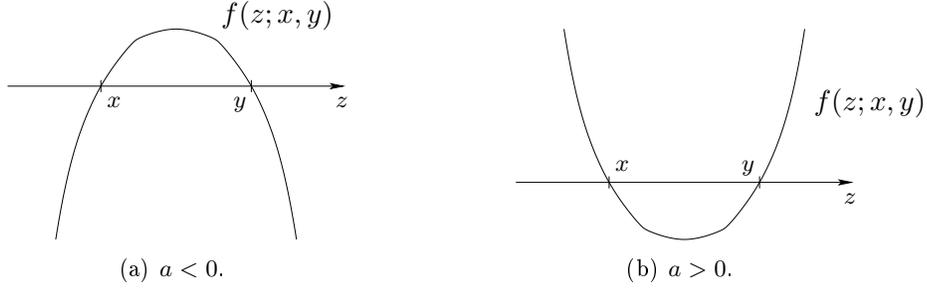

  \centering
  \mbox{\subfigure[$a<0$.]{
      \psfrag{x}{\footnotesize{$x$}} \psfrag{y}{\footnotesize{$y$}}
      \psfrag{z}{\footnotesize{$z$}} \psfrag{f(zxy)}{$f(z;x,y)$}
      \epsfig{figure=shape-f3-1.eps, width=.30\textwidth}}\hspace{2cm}
    \subfigure[$a>0$.]{
      \psfrag{x}{\footnotesize{$x$}} \psfrag{y}{\footnotesize{$y$}}
      \psfrag{z}{\footnotesize{$z$}} \psfrag{f(zxy)}{$f(z;x,y)$}
      \epsfig{figure=shape-f3-2.eps, width=.30\textwidth}}}
  \caption{{\small The shape of the 3-dimensional fitness as a function of the
    sign of $a$.}}
  \label{fig:shape-f3}
\end{figure}

\noindent In the case where $a>0$, by
Proposition~\ref{prop:regul}~(iv), $f(\cdot;\cdot,\cdot)$
has the shape of Fig.~\ref{fig:shape-f3}~(b) in the neighborhood
of $x^*$. Therefore, $z\not\in[x,y]$. Assume for example that $\
z<x<y$. By Proposition~\ref{prop:regul}~(iv) again,
$f(x;y,z)<0$ and $f(y;x,z)>0$. These conditions are incompatible
with class 33. Moreover, using the fact that
$\partial_{11}f(x;y)>0$ for all $x,y$ sufficiently close to $x^*$,
it follows from the fact that~(\ref{eq:pf-BEB}) is negative that
$f(z;y)/(y-z)>f(x;y)/(y-x)\geq 0$ and thus that $f(z;y)>0$.
Similarly, because of Assumption~(\ref{eq:cond1}),
$\partial_{22}f(x;y)>0$ for all $x,y$ sufficiently close to $x^*$.
Therefore, by Lemma~\ref{lem:regul},
$$
\frac{\partial}{\partial
  x}\Big(\frac{f(y;x)}{y-x}\Big)=-\int_0^1u\partial_{22}f(y;y+u(x-y))du<0
$$
for all $x,y$ sufficiently close to $x^*$. Thus, $f(y;x)\geq 0$
implies that $f(y;z)>0$. Together with the fact that $f(x;y,z)<0$,
these conditions are incompatible with classes 26, 29 and 31.

\noindent In the case where $\ x<y<z$, the method above proves that
$f(x;z)>0$, $f(z;x)>0$ and $f(y;x,z)<0$, which is again incompatible
with classes 26, 29, 31 and 33. This ends the proof of
Corollary~\ref{cor:BEB}.\end{proof}

\subsubsection{Double coexistence region in the neighborhood of $x^*$}
\label{sec:coex}

We prove here Proposition \ref{prop:coex}, that gives a criterion for
the coexistence of two traits in the neighborhood of $x^*$, and we end
the proof of Theorem~\ref{thm:br}.  The proof of
Proposition~\ref{prop:coex} is based on the study of the region of
double coexistence, defined as $\{(x,y)\in{\cal X}:f(x;y)>0\mbox{\
  and\ }f(y;x)>0\}$ in the neighborhood of $x^*$. The proof of
Theorem~\ref{thm:br} is based on a case-by-case study that extends the
proof of Corollary~\ref{cor:BEB}.

\bigskip
\noindent {\bf Proof of Proposition~\ref{prop:coex}\phantom{9}} It
follows from Proposition~\ref{prop:regul}~(iii) that the set of
$(x,y)\in{\cal X}$ such that $f(y;x)=0$ is composed of the line
$\:\{y=x\}\:$ and of a set which is, because of the Implicit Function
Theorem, a curve in the neighborhood of $x^*$, containing $(x^*,x^*)$
and admitting as tangent at this point the line
$\{a(y-x^*)=c(x-x^*)\}$. Let us call $\gamma$ this curve. Since $a<c$,
the curves $\gamma$ and $\:\{y=x\}\:$ divide ${\cal X}^2$ in the
neighborhood of $(x^*,x^*)$ into 4 regions. Moreover, because of
\eqref{eq:dl-f2}, $f(y;x)$ changes sign when the point $(x,y)$ changes
region by crossing either the line $\:\{y=x\}\:$ or the curve
$\gamma$.

\noindent It is elementary from a case-by-case study to check that
coexistence can occur in the neighborhood of $x^*$ if $c>a>0$,
$a>c>0$, $-a<c<0<a$ and $a<0<-a<c$, and that coexistence cannot occur
in the neighborhood of $x^*$ if $c<-a<0<a$, $c<a<0$, $a<c<0$ and
$a<0<c<-a$. The cases where coexistence is possible are represented in
Fig.~\ref{fig:coex} in the case where $x^*=0$. In these figures, the
curve $\gamma$ is represented by its tangent line
$\{a(y-x^*)=c(x-x^*)\}$ and the sign of $f(y;x)$ is represented by $+$
and $-$ signs depending on the position of $(x,y)$ with respect to
$\gamma$ and $\:\{y=x\}$. The sign of $f(x;y)$ is obtained by an axial
symmetry of the figure with axis $\:\{y=x\}$. We denote by $\gamma^s$
the symmetric of the curve $\gamma$ with respect to this axis. The
region of coexistence is the one where $f(y;x)>0$ and $f(x;y)>0$.

\begin{figure}[h]
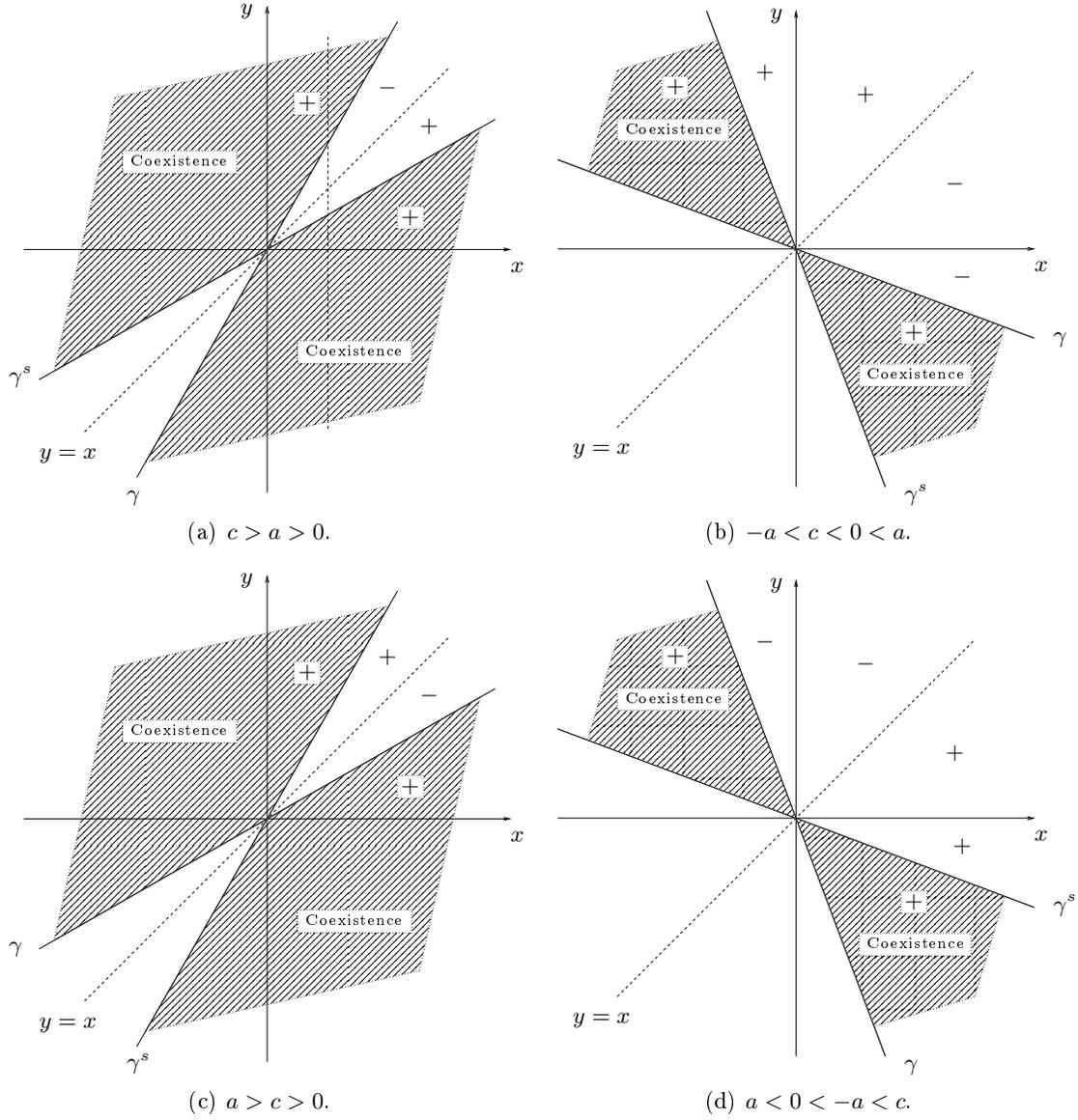

  \centering
  \mbox{\subfigure[$c>a>0$.]{
      \psfrag{x}{\footnotesize{$x$}} \psfrag{y}{\footnotesize{$y$}}
      \psfrag{g}{\footnotesize{$\gamma^s$}} \psfrag{gs}{\footnotesize{$\gamma$}}
      \psfrag{Coexistence}{\tiny{Coexistence}} \psfrag{x=y}{\footnotesize{$y=x$}}
      \psfrag{s1}{\footnotesize{$+$}} \psfrag{s2}{\footnotesize{$+$}}
      \psfrag{s3}{\footnotesize{$-$}} \psfrag{s4}{\footnotesize{$+$}}
      \epsfig{figure=fig_coex_3.eps, width=.47\textwidth}}\quad
    \subfigure[$-a<c<0<a$.]{
      \psfrag{x}{\footnotesize{$x$}} \psfrag{y}{\footnotesize{$y$}}
      \psfrag{g}{\footnotesize{$\gamma^s$}} \psfrag{gs}{\footnotesize{$\gamma$}}
      \psfrag{Coexistence}{\tiny{Coexistence}} \psfrag{x=y}{\footnotesize{$y=x$}}
      \psfrag{s1}{\footnotesize{$+$}} \psfrag{s2}{\footnotesize{$+$}}
      \psfrag{s3}{\footnotesize{$+$}} \psfrag{s4}{\footnotesize{$-$}}
      \psfrag{s5}{\footnotesize{$+$}} \psfrag{s6}{\footnotesize{$-$}}
      \epsfig{figure=fig_coex_2.eps, width=.47\textwidth}}} \\
  \mbox{\subfigure[$a>c>0$.]{
      \psfrag{x}{\footnotesize{$x$}} \psfrag{y}{\footnotesize{$y$}}
      \psfrag{g}{\footnotesize{$\gamma$}} \psfrag{gs}{\footnotesize{$\gamma^s$}}
      \psfrag{Coexistence}{\tiny{Coexistence}} \psfrag{x=y}{\footnotesize{$y=x$}}
      \psfrag{s1}{\footnotesize{$+$}} \psfrag{s2}{\footnotesize{$+$}}
      \psfrag{s3}{\footnotesize{$+$}} \psfrag{s4}{\footnotesize{$-$}}
      \epsfig{figure=fig_coex_1.eps, width=.47\textwidth}}\quad
    \subfigure[$a<0<-a<c$.]{
      \psfrag{x}{\footnotesize{$x$}} \psfrag{y}{\footnotesize{$y$}}
      \psfrag{gs}{\footnotesize{$\gamma^s$}} \psfrag{g}{\footnotesize{$\gamma$}}
      \psfrag{Coexistence}{\tiny{Coexistence}} \psfrag{x=y}{\footnotesize{$y=x$}}
      \psfrag{s1}{\footnotesize{$+$}} \psfrag{s2}{\footnotesize{$+$}}
      \psfrag{s3}{\footnotesize{$-$}} \psfrag{s4}{\footnotesize{$+$}}
      \psfrag{s5}{\footnotesize{$-$}} \psfrag{s6}{\footnotesize{$+$}}
      \epsfig{figure=fig_coex_2.eps, width=.47\textwidth}}}
  \caption{{\small In the four cases where coexistence is possible, these
    figures show the sign configuration of $f(y;x)$ depending on the
    position of $(x,y)$ with respect to the curve $\gamma$ and the line
    $\{y=x\}$ and the region of coexistence. For convenience, we
    assumed $x^*=0$.}}
  \label{fig:coex}
\end{figure}

\noindent Note that the expansion of $f(y; x)$ of
Proposition~\ref{prop:regul}~(iii) does not make use of any
assumption on $a$ and $c$. Therefore, a similar study can be done
to treat the degenerate cases. One easily obtains that coexistence
is possible in the neighborhood of $(x^*,x^*)$ if $c=a>0$, $c=0$
and $a>0$ or $a=0$ and $c>0$. Similarly, coexistence cannot occur
in the neighborhood of $(x^*,x^*)$ if $c=a<0$, $c=0$ and $a<0$ or
$a=0$ and $c<0$. The case $c=-a$ is undetermined and depends on
higher-order expansions of the fitness function.\hfill{$\Box$}


\bigskip \noindent
{\bf Proof of Theorem~\ref{thm:br}~(b): case $a<0$}

\noindent It follows from Theorems~\ref{thm:pes=tss} that for any
fixed $\eta>0$, for $\varepsilon$ small enough, the PES stays
monomorphic until it reaches $(x^*-\eta,x^*+\eta)$. Moreover, in view
of the proof of Proposition~\ref{prop:attract}, no mutant out of
$(x^*-\eta,x^*+\eta)$ can invade the population as long as it is
monomorphic with support inside this interval.

\noindent Now, by Proposition~\ref{prop:coex}, when $a<0$, coexistence
may happen in the rescaled PES if $c>-a$.
In this case, at the first coexistence time $\tau^\varepsilon$, the
two traits $x$ and $y$ that coexist belong to $(x^*-\eta,x^*+\eta)$
and are distant of less than $\varepsilon\mbox{Diam}({\cal X})$ since
$m(x,\cdot)$ has support in ${\cal X}-x$.

\noindent Let us examine what happens when a mutant trait $z$ invades
this population. Remind that we showed in the proof of
Corollary~\ref{cor:BEB} that, if $a<0$, $x<y$, $f(x;y)>0$, $f(y;x)>0$
and $f(z;x,y)>0$, then $f(x;y,z)<0$, $f(y;x,z)<0$, $f(z;y)>0$ and
$f(z;x)>0$. Examining Fig.~\ref{fig:10}, we see that these conditions
are incompatible with all classes except classes 7 and 9. Therefore,
once the mutant $z$ invades, the new state of the rescaled PES can be
either $\bar{n}(z)\delta_z$ in the case of class 7, or either
$\bar{n}_1(x,z)\delta_x+\bar{n}_2(x,z)\delta_z$ or
$\bar{n}_1(y,z)\delta_y+\bar{n}_2(y,z)\delta_z$ in the case of class
9. In particular, we see that either the population becomes
monomorphic again, or it stays dimorphic, but the distance between the
two traits of the support of the PES has decreased. In addition, in
both cases, the support of the new state of the PES is a subset of
$(x^*-\eta,x^*+\eta)$. Hence, $\eta$-branching, as defined in
Definition~\ref{def:br}, cannot occur as soon as
$\varepsilon<\eta/(2\mbox{Diam}({\cal X}))$. This ends the proof of
Theorem~\ref{thm:br}~(b).\hfill$\Box$

\bigskip \noindent
{\bf Proof of Theorem~\ref{thm:br}~(a): case $a>0$}

\noindent By Proposition~\ref{prop:coex}, when $a>0$, under the assumptions
of Theorem~\ref{thm:br}, we are in the situation of
Fig.~\ref{fig:coex}~(a), and hence coexistence is always possible
in the neighborhood of $x^*$. Fix $\eta>0$. We are going to prove
that, if $\eta$ is small enough, then for $\varepsilon$ small
enough,
\begin{description}
\item[\textmd{(i)}] the first time of coexistence $\tau^\varepsilon$ is a.s.\
  finite and $\mbox{Supp}(Z^\varepsilon_{\tau^\varepsilon-})\subset
  (x^*-\eta, x^*+\eta)$ a.s.;
\item[\textmd{(ii)}] after time $\tau^\varepsilon$, the distance
  between the two points of the support of the rescaled PES is
  non-descreasing and becomes a.s.\ bigger than $\eta/2$ in finite
  time, before exiting the interval $(x^*-\eta,x^*+\eta)$.
\end{description}
These two points will clearly imply Theorem~\ref{thm:br}~(a).

\noindent For Point~(i), observe first that, by
Proposition~\ref{prop:attract}, if $\tau^\varepsilon<+\infty$,
then $\mbox{Supp}(Z^\varepsilon_{\tau^\varepsilon-})\subset
(x^*-\eta, x^*+\eta)$. Thus we only have to prove that $\
\PP(\tau^\varepsilon<\infty)=1$.

 \noindent In view of Fig.~\ref{fig:coex}~(a), we observe that for
a given jump size, the closer the support is from $x^*$, the
easier co-existence is. The proof is based on this fact, taking
into account the additional difficulty that  the jump rate is
almost zero  in that case.

\noindent Fix $\kappa>0$.  Let us define
$$
\theta_\kappa=\inf\big\{t\geq 0:\
\mbox{Supp}(Z^\varepsilon_t)\subset(x^*-\kappa\varepsilon,x^*+\kappa\varepsilon)
\big\}.
$$

From Assumptions~(A'3) and (A'''), the functions
$$
x\mapsto\int_{0}^{+\infty}h\:m(x,h)dh\quad\mbox{and}\quad x\mapsto\int_{-\infty}^0
h\: m(x,h)dh
$$
are continuous and  there exists $\beta>0$ such that, for all
$x\in[x^*-\eta,x^*+\eta]$,
\begin{equation}
  \label{eq:on-saute}
  \int_{0}^{+\infty}h\:m(x,h)dh>\beta>0\quad\mbox{and}\quad\int_{-\infty}^0
  h\:m(x,h)dh<-\beta<0.
\end{equation}
It is thus elementary to check, using
 \eqref{eq:dl-f2}, that for any
 $x\in[x^*-\eta,x^*-\kappa\varepsilon]$, resp.
 $x\in[x^*+\kappa\varepsilon,x^*+\eta]$,
\begin{align*}
  & \int_{\beta/2}^{+\infty}[g(x+\varepsilon h,x)]_+m(x,h)dh\geq
  C\varepsilon^2\beta\kappa>0\ ; \\ \hbox{resp.} &
  \int^{-\beta/2}_{-\infty}[g(x+\varepsilon h,x)]_+m(x,h)dh\leq
  -C'\varepsilon^2\beta\kappa<0.
\end{align*}
Assume that $\ \PP(\tau^\varepsilon=\infty\ ;\
\theta_{\beta/2}=\infty)>0$. Then, on this event, in view
of~(\ref{eq:rescaled-generator}), the previous inequalities show that
there are infinitely many jumps in the TSS, with jump size bigger than
$\varepsilon \beta/2$. This yields a contradiction since the TSS is
monotonous before $\tau^\varepsilon$. Indeed, drawing a vertical line
at some level $x$ in Fig.~\ref{fig:coex}~(a) (for example the vertical
dotted line), one can see that all the mutants invading the
monomorphic population with trait $x$ either coexist with $x$ or are
closer to $x^*$ than $x$. On the other hand, it is clear from
Fig.~\ref{fig:coex} that the first jump after time $\theta_{\beta/2}$
in the TSS with jump size bigger than $\varepsilon\beta/2$ (which
almost surely happens) drives the TSS in the coexistence
region. Therefore, $\ \PP(\tau^\varepsilon=\infty\ ;\
\theta_{\beta/2}<\infty)=0$ and then $\
\PP(\tau^\varepsilon=\infty)=0$.

\medskip
\noindent For Point~(ii), assume that the rescaled PES is dimorphic at
some time $t$, with support $\{x,y\}$, $x<y$. Let us examine what
happens when a mutant trait $z$ invades this population. Remind that
we showed in the proof of Corollary~\ref{cor:BEB} that, if $a>0$ and
$x,y,z$ belong to $(x^*-\eta_0,x^*+\eta_0)$ for some $\eta_0>0$ and
satisfy $x<y$, $f(x;y)>0$, $f(y;x)>0$ and $f(z;x,y)>0$, then
\begin{itemize}
\item either $z<x<y$ and $f(x;y,z)<0$, $f(y;x,z)>0$, $f(z;y)>0$
and
  $f(y;z)>0$,
\item or $x<y<z$ and $f(x;y,z)>0$, $f(y;x,z)<0$, $f(z;x)>0$ and
  $f(x;z)>0$.
\end{itemize}
We can assume without loss of generality that $\eta<\eta_0$.
Examining Fig.~\ref{fig:10}, we see that both situations are only
compatible with classes 9, 10, 11 and 12. Therefore, once the mutant
$z$ invades, the new state of the rescaled PES is
$\bar{n}_1(x,z)\delta_x+\bar{n}_2(x,z)\delta_z$ if $x<y<z$ or
$\bar{n}_1(y,z)\delta_y+\bar{n}_2(y,z)\delta_z$ if $z<x<y$. In both
cases, we see that the distance between the two traits of the support
of the PES can only increase until the stopping time $\theta'$ where
one of the points of the support leaves $(x^*-\eta,x^*+\eta)$. In
order to end the proof, it suffices to prove that, if $\eta$ is
sufficiently small,
$$
\theta'<\infty\quad\mbox{\ a.s.}\quad\mbox{and}\quad
\mbox{Diam}(\mbox{Supp}(\tilde{Z}^\varepsilon_{\theta'}))>\eta/2.
$$
The fact that $\theta'<\infty$ a.s.\ can be proved
using~(\ref{eq:on-saute}) in a similar way as for Point~(i). The lower
bound of the diameter of the PES immediately follows from the fact
that
$$
\tau^\varepsilon>\theta_{\kappa_0}\quad\mbox{\ a.s.,}
\quad\mbox{where}\quad
\kappa_0=\frac{2c\mbox{Diam({\cal X})}}{c-a}.
$$
This inequality follows from the following argument: for any
$x,y\in\RR$ such that
\begin{equation}
  \label{eq:last}
  |x-x^*|\geq \frac{2c|x-y|}{c-a},
\end{equation}
it can be easily checked that
$$
|y-x^*|\geq \frac{1}{2}\Big(1+\frac{a}{c}\Big)|x-x^*|\quad
\hbox{and}\quad |y-x^*|\leq\Big(1+\frac{c-a}{2c}\Big)|x-x^*|.
$$
Since $0<a<c$, we have
$$
\frac{1}{2}\Big(1+\frac{a}{c}\Big)>\frac{a}{c}\quad\mbox{and}\quad
1+\frac{c-a}{2c}<1+\frac{c-a}{a}=\frac{c}{a}.
$$
Now, $\{(y-x^*)=(c/a)(x-x^*)\}$ is tangent to $\gamma$ at $(x^*,x^*)$
and $\{(y-x^*)=(a/c)(x-x^*)\}$ is tangent to $\gamma^s$ at
$(x^*,x^*)$. Therefore, in view of Fig.~\ref{fig:coex}~(a), any
$x,y\in\RR$ satisfying~(\ref{eq:last}) do not coexist together.

\noindent To conclude, it suffices to observe that, in the rescaled
PES $\tilde{Z}$, the distance between a mutant trait and the trait of
its progenitor in the PES is always smaller than
$\varepsilon\mbox{Diam}({\cal X})$. Therefore, for any
$x\in(x^*-\eta,x^*+\eta)$ such that $|x-x^*|\geq \varepsilon\kappa_0$,
any mutant trait $y$ born from $x$ do not coexist with
$x$.\hfill$\Box$



\appendix

\section{Proof of Theorem~\ref{thm:PES-fdd}}
\label{sec:pf-PES}

The proof of this result is very similar to the proof
of~\cite[Thm.1]{champagnat-06}. We will not repeat all the details and
we will restrict ourselves to the steps that must be modified. The
general idea of the proof follows closely the heuristic argument of
Section~\ref{sec:idea-pf}. Its skeleton is similar to the one
in~\cite{champagnat-06} for monomorphic populations.

\noindent For all $\varepsilon>0$, $t>0$, and $\Gamma\subset{\cal
X}$ measurable, let
\begin{multline*}
  A_{\varepsilon,d}(t,\Gamma):=\Big\{\mbox{Supp}(\nu_{t/Ku_K})\subset\Gamma \mbox{\
    has $d$ elements that coexist, say\ }x_1,\ldots,x_d, \\ \mbox{\ and\
  }\forall 1\leq i\leq d,\
  |\langle\nu_{t/Ku_k},\mathbf{1}_{\{x_i\}}\rangle
  -\bar{n}_i(\mathbf{x})|<\varepsilon\}.
\end{multline*}
To prove Theorem~\ref{thm:PES-fdd}, we establish that for all
$\varepsilon>0$, $t>0$ and $\Gamma\subset{\cal X}$ measurable,
\begin{equation}
  \label{eq:pf-C6-0}
  \lim_{K\rightarrow+\infty}\mathbb{P}(A_{\varepsilon,d}(t,\Gamma))
  =\mathbb{P}(\mbox{Supp}(Z_t)\subset\Gamma\mbox{\ and has $d$
    elements}).
\end{equation}
where $(Z_t,t\geq 0)$ is defined in Theorem~\ref{thm:PES-fdd}. The
first ingredient of the proof is the following proposition, which
generalizes Theorem~3~(a) and (b) of~\cite{champagnat-06}.
\begin{prop}
  \label{prop:EK}
  Assume that, for any $K\geq 1$,
  $\mbox{Supp}(\nu^K_0)=\{x_1,\ldots,x_d\}$ and $\langle
  \nu_0^{K},\mathbf{1}_{\{x_i\}}\rangle\in C$ a.s., where $C$ is a
  compact subset of $\RR_+$. Let $\phi(t,(n_1,\ldots,n_d))$ denote the
  value at time $t$ of the solution of $LV(d,\mathbf{x})$ with initial
  condition $(n_1,\ldots,n_d)$. Then, for all $T>0$,
  \begin{equation}
    \label{eq:EK}
    \lim_{K\rightarrow+\infty}\sup_{1\leq i\leq d,\ t\in[0,T]}
    \Big|\langle\nu^K_t,\mathbf{1}_{\{x_i\}}\rangle
    -\phi_i(t,(\langle\nu^K_0,\mathbf{1}_{\{x_1\}}\rangle,
    \ldots,\langle\nu^K_0,\mathbf{1}_{\{x_d\}}\rangle))\Big|=0\quad a.s.
  \end{equation}
\end{prop}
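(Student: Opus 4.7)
The plan is to treat $(\nu^K_t)_{t\in[0,T]}$, for as long as no mutation has fired, as a $d$-dimensional pure birth-death-competition chain on the fixed traits $x_1,\ldots,x_d$, and then to invoke a standard Doob-Gronwall argument to obtain convergence to the Lotka-Volterra flow. The only non-routine step is the upgrade of the classical in-probability convergence to the almost-sure convergence claimed in~\eqref{eq:EK}.

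First I would establish the moment bound $\sup_K\mathbb E\big[\sup_{t\le T}\langle\nu^K_t,\mathbf 1\rangle^p\big]<\infty$ for every $p\ge 1$: by (A1)-(A2), the total mass is pathwise dominated by a scalar logistic birth-death chain, for which this bound is classical (see~\cite{fournier-meleard-04}). Let $\mathcal E^K_T$ denote the event that no mutation fires on $[0,T]$; since the total mutation rate is bounded by $u_K\bar\lambda K\langle\nu^K_t,\mathbf 1\rangle$, the moment bound gives $\mathbb P((\mathcal E^K_T)^c)=O(Ku_K)\to 0$ by~\eqref{eq:u_K-K}. On $\mathcal E^K_T$, the support of $\nu^K_t$ remains $\{x_1,\ldots,x_d\}$, and applying Dynkin's formula to the generator~\eqref{eq:generator-renormalized-IPS} (with $u_K$ formally set to $0$) yields, for $n^K_i(t):=\langle\nu^K_t,\mathbf 1_{\{x_i\}}\rangle$, the semimartingale decomposition
\[
n^K_i(t)=n^K_i(0)+\int_0^t F^{\mathbf x}_i(\mathbf n^K(s))\,ds+M^{K,i}_t,
\]
where $M^{K,i}$ is a purely discontinuous square-integrable martingale whose predictable quadratic variation has expectation $O(1/K)$ on $\mathcal E^K_T$, thanks to the moment bound. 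By Doob's $L^2$-inequality, $\mathbb E\big[\sup_{t\le T}|M^{K,i}_t|^2\big]=O(1/K)$, and since $F^{\mathbf x}$ is Lipschitz (with some constant $L$) on the compact set containing the trajectories, subtracting the integral equation satisfied by $\phi(t,\mathbf n^K(0))$ and applying Gronwall's lemma yields, on $\mathcal E^K_T$,
\[
\sup_{t\le T}\max_{1\le i\le d}\big|n^K_i(t)-\phi_i(t,\mathbf n^K(0))\big|\ \le\ e^{LT}\sum_{j=1}^d\sup_{t\le T}|M^{K,j}_t|,
\]
which delivers convergence in~\eqref{eq:EK} in probability, at rate $K^{-1/2}$.

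The main obstacle is the upgrade to almost-sure convergence. I would sharpen the martingale estimate using Burkholder-Davis-Gundy for purely discontinuous martingales combined with the higher-moment bound of the first step, obtaining $\mathbb E\big[\sup_{t\le T}|M^{K,i}_t|^{2p}\big]=O(K^{-p})$ for every $p\ge 1$. Picking $p\ge 2$, Markov's inequality produces summable tails and Borel-Cantelli applied on the common probability space on which the whole family $(\nu^K)_K$ is realized as a functional of a single Poisson random measure (as in~\cite{fournier-meleard-04}) then yields the a.s. convergence to $0$ of the martingale part. To check that $\mathcal E^K_T$ itself occurs eventually a.s.\ in $K$ — which is subtle because $\mathbb P((\mathcal E^K_T)^c)=O(Ku_K)$ is not automatically summable under~\eqref{eq:u_K-K} — I would exploit the Poisson construction directly, bounding the number of mutation marks on $[0,T]$ by the value at $T$ of a Poisson process of intensity $O(Ku_K)$ and using a sub-sequence/Borel-Cantelli argument as in the corresponding step of the proof of~\cite[Thm.~3]{champagnat-06}, to which the present statement is a direct adaptation.
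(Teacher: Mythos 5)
Your route is genuinely different from the paper's. The paper disposes of Proposition~\ref{prop:EK} by quoting Theorem~11.2.1 of~\cite{ethier-kurtz-86} (the strong law of large numbers for density-dependent Markov chains, proved there via the random time-change representation and the strong LLN for Poisson processes), and only handles two points by hand: (a) $F^{\mathbf{x}}$ is not globally Lipschitz on $\RR_+^d$, which is dealt with by the a priori bound $\phi_i(t,\mathbf{n})\leq M\vee(2\bar{\lambda}/\underline{\alpha})$ showing the flow stays in a compact set; and (b) mutations must be excluded on $[0,T]$, which is done by Lemma~2~(a) of~\cite{champagnat-06}. You instead reprove the law of large numbers from scratch via Dynkin's formula, Doob/Burkholder--Davis--Gundy and Gronwall. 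This is more self-contained and yields an explicit $K^{-1/2}$ rate in probability, at the price of having to rebuild the almost-sure upgrade that the Ethier--Kurtz time-change proof delivers essentially for free.

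Two points in your write-up need attention. First, your Gronwall step uses a Lipschitz constant for $F^{\mathbf{x}}$ ``on the compact set containing the trajectories'', but $\sup_{t\leq T}\langle\nu^K_t,\mathbf{1}\rangle$ is not deterministically bounded, so that set is random; you need the same localization the paper performs for $\phi$ (stop $\mathbf{n}^K$ at the exit from a fixed large ball and show from your moment bounds that the exit probability is summably small in $K$). Second, and more substantively for the statement as written, your treatment of $\mathcal{E}^K_T$ does not deliver~\eqref{eq:EK} almost surely: a subsequence/Borel--Cantelli argument only gives a.s.\ convergence along a subsequence, i.e.\ convergence in probability, and $\mathbb{P}((\mathcal{E}^K_T)^c)=O(Ku_K)$ is indeed not summable under~\eqref{eq:u_K-K} alone. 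The clean fix is to exploit the common Poisson construction more seriously: the integrated mutation intensity on $[0,T]$ is $O(Ku_KT)\rightarrow 0$, so if the mutation marks of all the $\nu^K$ are read off a single Poisson random measure, the first mark falls outside the relevant region for all $K$ large enough, almost surely. That said, the paper's own proof is no more careful on this point (it too only invokes convergence in probability of the no-mutation event), and convergence in probability is all that is used downstream, so this is a defect you share with the original rather than one that invalidates your approach.
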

This result is a direct corollary of Theorem~11.2.1
of~\cite{ethier-kurtz-86}, except for two small difficulties. The
first one is that Theorem~11.2.1 of~\cite{ethier-kurtz-86} assumes
that the function $\mathbf{n}\mapsto F^{\mathbf{x}}(\mathbf{n})$
involved in the definition~(\ref{eq:LVcompl}) of the Lotka
Volterra system is uniformly Lipschitz on $\RR_+^d$, which is not
the case. However, observe first that, if $n_i\leq M$ for some
$M>0$ for all $i\in\{1,\ldots,d\}$, then
$\phi_i(t,(n_1,\ldots,n_d))\leq
M\vee(2\bar{\lambda}/\underline{\alpha})$ for all $t\geq 0$.
Indeed, if there is equality for some $t\geq 0$ and
$i\in\{1,\ldots,d\}$, then $\dot{\phi}_i(t,(n_1,\ldots,n_d))<0$.
Therefore, the coefficients of the system $LV(d,\mathbf{x})$ are
uniformly Lipschitz on the set of states that can be attained by
the solution of the system starting from any initial conditions in
a compact set. The second difficulty is that Theorem~11.2.1
of~\cite{ethier-kurtz-86} only implies that~(\ref{eq:EK}) holds on
the event where there is no mutation between $0$ and $T$. In
Lemma~2~(a) of~\cite{champagnat-06}, it is proved that for general
initial condition $\nu^K_0$, the probability of mutation on the
time interval $[0,T]$ converges to 0, thus the conclusion follows.

\medskip
\noindent The second ingredient is the following exponential deviation
estimate on the so-called ``problem of exit from an attracting
domain''~\cite{freidlin-wentzell-84}. It generalizes Theorem~3~(c)
of~\cite{champagnat-06}.
\begin{prop}
  \label{prop:FW}
  Let $x_1,\ldots,x_d\in{\cal X}$ coexist. Then there exist constants
  $c,V>0$ such that, for any sufficently small $\varepsilon>0$, if
  $(\langle \nu_0^{K},\mathbf{1}_{\{x_i\}}\rangle)_{1 \leq i\leq d}$
  belongs to the $(\varepsilon/2)$-neighborhood of
  $\bar{\mathbf{n}}(\mathbf{x})$, the time of exit of $(\langle
  \nu_t^{K},\mathbf{1}_{\{x_i\}}\rangle)_{1 \leq i\leq d}$ from the
  $\varepsilon$-neighborhhod of $\bar{\mathbf{n}}(\mathbf{x})$ is
  bigger than $e^{VK}\wedge\tau$ with probability converging to 1,
  where $\tau$ denotes the first mutation time. Moreover, the previous
  result also holds if, for all $i\in\{1,\ldots,d\}$, the death rate
  of an individual with trait $x_i$
  \begin{equation}
    \label{eq:FW}
    \mu(x_i)+\sum_{j=1}^d\alpha(x_i,x_j)\langle
    \nu_t^{K},\mathbf{1}_{\{x_j\}}\rangle
  \end{equation}
  is perturbed by an additional random process that is uniformly
  bounded by $c\varepsilon$.
\end{prop}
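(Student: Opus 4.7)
The plan is to apply Freidlin--Wentzell large-deviations theory for the exit from an attracting domain to the density-dependent jump Markov chain obtained by restricting the population dynamics to times before the first mutation~$\tau$. On the event $\{t<\tau\}$, the vector $N^K(t):=(\langle\nu^K_t,\mathbf{1}_{\{x_i\}}\rangle)_{1\leq i\leq d}$ is a pure density-dependent jump Markov chain on $(K^{-1}\NN)^d$: its $i$-th coordinate jumps by $+1/K$ with per-capita rate $(1-u_K p(x_i))\lambda(x_i)$ and by $-1/K$ with per-capita rate $\mu(x_i)+\sum_{j=1}^d\alpha(x_i,x_j)N^K_j(t)$, both of total rate of order $K$. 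By Proposition~\ref{prop:EK} its fluid limit is $LV(d,\mathbf{x})$ with drift $F^{\mathbf{x}}$, for which $\bar{\mathbf{n}}(\mathbf{x})$ is locally asymptotically stable since $x_1,\ldots,x_d$ coexist.

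Fix $\varepsilon_0>0$ so small that $\overline{B(\bar{\mathbf{n}}(\mathbf{x}),\varepsilon_0)}$ lies both in $(\RR_+^*)^d$ and in the basin of attraction of $\bar{\mathbf{n}}(\mathbf{x})$ for $LV(d,\mathbf{x})$. Freidlin--Wentzell theory~\cite{freidlin-wentzell-84} (or its density-dependent variant of Shwartz--Weiss) provides a good rate function and an associated quasi-potential $S(\mathbf{y})$ giving the minimal action needed to travel from $\bar{\mathbf{n}}(\mathbf{x})$ to $\mathbf{y}$. Local asymptotic stability implies $S(\mathbf{y})>0$ for every $\mathbf{y}\ne\bar{\mathbf{n}}(\mathbf{x})$, so for each $\varepsilon\in(0,\varepsilon_0]$ the quantity $V=V(\varepsilon):=\inf\{S(\mathbf{y}):|\mathbf{y}-\bar{\mathbf{n}}(\mathbf{x})|=\varepsilon\}$ is strictly positive. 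The classical exit-time lower bound (Theorem~4.4.1 of~\cite{freidlin-wentzell-84}) then gives $\PP(\sigma^K>e^{VK})\to 1$, where $\sigma^K$ is the exit time of the pure density-dependent chain from $B(\bar{\mathbf{n}}(\mathbf{x}),\varepsilon)$ started in $B(\bar{\mathbf{n}}(\mathbf{x}),\varepsilon/2)$. Since our process coincides with this chain on $\{t<\tau\}$, up to the uniformly $1-o(1)$ birth-rate factor $1-u_Kp(x_i)$, the first assertion follows.

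For the perturbed death rate, the extra adapted random contribution of magnitude $\leq c\varepsilon$ is not a function of $N^K(t)$ alone, so Freidlin--Wentzell does not apply verbatim. The plan is to sandwich the perturbed process between two auxiliary density-dependent chains whose death rates are shifted deterministically by $\pm c\varepsilon$. Because the Jacobian $DF^{\mathbf{x}}(\bar{\mathbf{n}}(\mathbf{x}))$ has strictly negative spectrum (coexistence), for $c\varepsilon$ small enough both auxiliary fluid limits still admit a locally asymptotically stable equilibrium within $B(\bar{\mathbf{n}}(\mathbf{x}),\varepsilon/4)$, and their quasi-potentials remain uniformly bounded below by some $V'>0$, by continuity of the action functional in the drift field. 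Transferring the exit-time estimate to the perturbed chain then yields the second claim. The main obstacle is to build a pathwise coupling, based on the Poisson representation~\eqref{def:proc-eps}, that preserves the sandwich across all $d$ coordinates simultaneously in spite of the adversarial, density-dependent nature of the interactions; this can be bypassed either by a Gronwall-type comparison of the perturbed and unperturbed chains up to their common exit time, or by directly absorbing the $c\varepsilon$-perturbation into the rate-function inequalities of Freidlin--Wentzell as was done in the monomorphic case in~\cite{champagnat-06}.
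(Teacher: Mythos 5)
Your proposal follows exactly the route the paper intends: the authors explicitly leave this proof to the reader, presenting the result only as a standard exponential estimate for the problem of exit from an attracting domain in the sense of Freidlin--Wentzell (generalizing Theorem~3~(c) of~\cite{champagnat-06}), which is precisely the quasi-potential argument you set up. The only caveat is that a coordinatewise sandwich between chains with death rates shifted by $\pm c\varepsilon$ cannot be made monotone for a competitive system (lowering $n_j$ raises the growth rate of $n_i$), so of the two workarounds you list for the perturbed case only the second --- absorbing the uniformly small rate perturbation directly into the large-deviation lower bound, as in the monomorphic case of~\cite{champagnat-06} --- is the viable one, and since you identify it the argument stands.
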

Such results are fairly standard and can be proved in a variety of
ways. We let the proof to the reader. The first part of this
proposition is used to prove that, when the first mutation occurs, the
population densities have never left the $\varepsilon$-neighborhood of
$\bar{\mathbf{n}}(\mathbf{x})$ and the second is used to prove that,
after the first mutation, as long as the mutant population is small,
the resident population densities do not leave the
$\varepsilon$-neighborhood of $\bar{\mathbf{n}}(\mathbf{x})$. In this
case, the additional term in~(\ref{eq:FW}) is $\alpha(x_i,y)\langle
\nu_t^{K},\mathbf{1}_{\{y\}}\rangle$, where $y$ is the mutant trait,
which is smaller that $\bar{\alpha}\varepsilon$ if $\langle
\nu_t^{K},\mathbf{1}_{\{y\}}\rangle\leq\varepsilon$.


\medskip
\noindent From these two results can be deduced the following
lemma, which is the extension of Lemma~2~(b) and~(c)
of~\cite{champagnat-06}. The proof is a simple copy of the
argument in~\cite{champagnat-06}.
\begin{lem}
  \label{lem:bef-mut}
  Let $\ \mbox{Supp}(\nu^K_0)=\{x_1,\ldots,x_d\}\ $ that coexist and let
  $\tau$ denote the first mutation time. There exists $\varepsilon_0$
  such that, if $(\langle \nu_0^{K},\mathbf{1}_{\{x_i\}}\rangle)_{1
    \leq i\leq d}$ belongs to the $\varepsilon_0$-neighborhood of
  $\bar{\mathbf{n}}(\mathbf{x})$, then, for any
  $\varepsilon<\varepsilon_0$,
  \begin{gather*}
    \lim_{K\rightarrow+\infty}\PP\Big(\tau>\log K,\ \sup_{1\leq i\leq d,\
      t\in[\log K,\tau]}|\langle
    \nu_t^{K},\mathbf{1}_{\{x_i\}}\rangle-\bar{n}_i(\mathbf{x})|<\varepsilon\Big)=1,
    \\ Ku_K \tau\ \overset{{\cal L}}{\underset{K\rightarrow
        \infty}{\Longrightarrow}}\
    \mbox{\textup{Exp}}\Big(\sum_{j=1}^dp(x_j)\lambda(x_j)\bar{n}_j(\mathbf{x})\Big)
    \\ \mbox{and}\quad
    \lim_{K\rightarrow+\infty}\PP(\mbox{at time $\tau$, the mutant is
      born from trait
      $x_i$})=\frac{p(x_i)\lambda(x_i)\bar{n}_i(\mathbf{x})}
    {\sum_{j=1}^dp(x_j)\lambda(x_j)\bar{n}_j(\mathbf{x})}
  \end{gather*}
  for all $i\in\{1,\ldots,d\}$, where $\:\overset{{\cal L}}{\Longrightarrow}\:$ denotes the
  convergence in law of real r.v.\ and $\mbox{\textup{Exp}}(u)$
  denotes the exponential law with parameter $u$.
\end{lem}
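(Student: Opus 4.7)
The plan is to establish the three claims sequentially, using Propositions~\ref{prop:EK} and~\ref{prop:FW} (the LV convergence on finite intervals and the Freidlin-Wentzell exit estimate) in tandem, together with the rare-mutation scaling.

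\textbf{Step 1 (Stabilization near $\bar{\mathbf{n}}(\mathbf{x})$).} Since $x_1,\ldots,x_d$ coexist, $\bar{\mathbf{n}}(\mathbf{x})$ is asymptotically stable for $LV(d,\mathbf{x})$. Choose $\varepsilon_0$ so small that the closed $\varepsilon_0$-neighborhood is contained in its basin of attraction and in the domain where Proposition~\ref{prop:FW} applies. By exponential linear stability, there exists a fixed time $T_0=T_0(\varepsilon)$ such that any solution of $LV(d,\mathbf{x})$ starting in the $\varepsilon_0$-neighborhood lies in the $(\varepsilon/4)$-neighborhood of $\bar{\mathbf{n}}(\mathbf{x})$ at time $T_0$. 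On the event $\{\tau>T_0\}$, Proposition~\ref{prop:EK} gives that, with probability tending to $1$, $(\langle\nu^K_{T_0},\mathbf{1}_{\{x_i\}}\rangle)_i$ is within $\varepsilon/2$ of $\bar{\mathbf{n}}(\mathbf{x})$. But the total mutation rate on $[0,T_0]$ is at most a constant times $Ku_K$, so $\mathbb{P}(\tau\leq T_0)\leq C T_0 Ku_K\to 0$ under~\eqref{eq:u_K-K}. Hence at time $T_0$, with high probability, no mutation has yet occurred and the densities are in the $(\varepsilon/2)$-neighborhood of $\bar{\mathbf{n}}(\mathbf{x})$.

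\textbf{Step 2 (Remaining close until $\tau$).} Apply Proposition~\ref{prop:FW} starting from time $T_0$: with probability tending to $1$, the densities do not leave the $\varepsilon$-neighborhood of $\bar{\mathbf{n}}(\mathbf{x})$ before time $T_0+e^{VK}\wedge\tau$. Since on this $\varepsilon$-neighborhood the total mutation rate is bounded below by $c Ku_K$ for some $c>0$, the first mutation time $\tau$ is stochastically dominated by $T_0$ plus an exponential r.v.\ of parameter $cKu_K$; using $1/(Ku_K)\ll e^{VK}$ we conclude $\mathbb{P}(\tau>e^{VK})\to 0$. Combining these facts with $\log K>T_0$ for $K$ large yields assertion~(1).

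\textbf{Step 3 (Exponential law of $Ku_K\tau$).} On the event of Steps~1--2 (call it $A_K^\varepsilon$) the instantaneous total mutation rate at time $t\in[T_0,\tau]$ equals
\[
R_K(t)=Ku_K\sum_{j=1}^d p(x_j)\lambda(x_j)\langle\nu^K_t,\mathbf{1}_{\{x_j\}}\rangle
= Ku_K\Big(\Lambda(\mathbf{x})+\eta_K(t)\Big),
\]
where $\Lambda(\mathbf{x}):=\sum_j p(x_j)\lambda(x_j)\bar{n}_j(\mathbf{x})$ and $|\eta_K(t)|\leq C\varepsilon$ on $A_K^\varepsilon$. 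By the standard representation of the first jump time of a pure-jump Markov process via a unit-rate Poisson process subject to an internal clock, and by a direct Laplace-transform computation using $Ku_K T_0\to 0$, one obtains
\[
\mathbb{E}\big[e^{-s Ku_K\tau}\mathbf{1}_{A_K^\varepsilon}\big]
\xrightarrow[K\to\infty]{}\ \frac{\Lambda(\mathbf{x})}{s+\Lambda(\mathbf{x})}+O(\varepsilon),
\]
and letting $\varepsilon\to 0$ gives assertion~(2).

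\textbf{Step 4 (Identity of the mutating trait).} Conditionally on the history up to $\tau-$, the mutation at time $\tau$ comes from an individual of trait $x_i$ with probability
\[
\frac{p(x_i)\lambda(x_i)\langle\nu^K_{\tau-},\mathbf{1}_{\{x_i\}}\rangle}
{\sum_{j=1}^d p(x_j)\lambda(x_j)\langle\nu^K_{\tau-},\mathbf{1}_{\{x_j\}}\rangle}.
\]
On $A_K^\varepsilon$ the densities at $\tau-$ are $\varepsilon$-close to $\bar{n}_j(\mathbf{x})$; taking $K\to\infty$ and then $\varepsilon\to 0$ produces assertion~(3).

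The only real obstacle is the quantitative control of the residual exit probability in Step~2 together with the Laplace-transform bookkeeping in Step~3, both of which are straightforward once Propositions~\ref{prop:EK} and~\ref{prop:FW} are in hand; the argument is otherwise a direct transcription of the monomorphic case of \cite{champagnat-06} with the scalar $\bar n(x)$ replaced by the vector $\bar{\mathbf{n}}(\mathbf{x})$ and the rate $p(x)\lambda(x)\bar n(x)$ replaced by the sum $\Lambda(\mathbf{x})$.
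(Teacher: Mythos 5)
Your proof is correct and follows essentially the same route as the paper, which itself only states that the lemma "is a simple copy of the argument in \cite{champagnat-06}" built on Propositions~\ref{prop:EK} and~\ref{prop:FW}: a law-of-large-numbers stabilization phase, a Freidlin--Wentzell confinement until the first mutation, and the resulting identification of the (rescaled) mutation clock and of the mutating trait. The only point worth spelling out slightly more is that assertion~(1) needs $\PP(\tau\leq\log K)\to 0$, i.e.\ the upper bound $C\,Ku_K\log K\to 0$ from the left-hand side of~(\ref{eq:u_K-K}) applied on the whole interval $[0,\log K]$ (not just on $[0,T_0]$), but this is the same estimate you already use.
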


\noindent The fourth ingredient is the following lemma, which is
an extension of Lemma~3 of~\cite{champagnat-06}.
\begin{lem}
  \label{lem:aft-mut}
  Let $\mbox{Supp}(\nu^K_0)=\{x_1,\ldots,x_d,y\}$ where
  $x_1,\ldots,x_d$ coexist and $y$ is a mutant trait that satisfy
  Assumption~\textup{(B)}. Let $\tau$ denote the first next mutation time,
  and define
  \begin{align*}
    \tau_1&=\inf\{t\geq 0:\forall i\in I(\mathbf{n}^*),\ |\langle
    \nu_t^{K},\mathbf{1}_{\{x_i\}}\rangle-n^*_i|<\varepsilon \mbox{\
      and\ }\forall i\not\in I(\mathbf{n}^*),\ \langle
    \nu_t^{K},\mathbf{1}_{\{x_i\}}\rangle=0\}
    \\ \tau_2&=\inf\{t\geq 0:\langle
    \nu_t^{K},\mathbf{1}_{\{y\}}\rangle=0\mbox{\ and\ }\forall
    i\in\{1,\ldots,d\},\ |\langle
    \nu_t^{K},\mathbf{1}_{\{x_i\}}\rangle-\bar{n}_{i}(\mathbf{x})|<\varepsilon\}.
  \end{align*}
  Assume that $\langle \nu_0^{K},\mathbf{1}_{\{y\}}\rangle=1/K$ (a
  single initial mutant). Then, there exists $\varepsilon_0$ such that
  for all $\varepsilon<\varepsilon_0$, if $(\langle
  \nu_0^{K},\mathbf{1}_{\{x_i\}}\rangle)_{1 \leq i\leq d}$ belongs to
  the $\varepsilon$-neighborhood of $\bar{\mathbf{n}}(\mathbf{x})$,
  \begin{gather*}
    \lim_{K\rightarrow+\infty}\PP(\tau_1<\tau_2)
    =\frac{[f(y;\mathbf{x})]_+}{\lambda(y)},\quad
    \lim_{K\rightarrow+\infty}\PP(\tau_2<\tau_1)
    =1-\frac{[f(y;\mathbf{x})]_+}{\lambda(y)} \\
     \mbox{and}\quad\forall \eta>0,\quad
    \lim_{K\rightarrow+\infty}\PP\left(\tau_1\wedge\tau_2<\frac{\eta}{K
      u_K}\wedge\tau\right)=1.
  \end{gather*}
\end{lem}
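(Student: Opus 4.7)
The proof follows the three-phase decomposition described in Section~\ref{sec:idea-pf}, adapting the argument of Lemma~3 of~\cite{champagnat-06} to the polymorphic setting. Fix small parameters $0<\eta_2<\eta_1<\varepsilon$ and let me decompose the dynamics from the initial mutant appearance until either $\tau_1$ or $\tau_2$ is reached.

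\emph{Phase 1 (branching-process regime).} Let $S_1=\inf\{t\geq 0:\langle\nu^K_t,\mathbf{1}_{\{y\}}\rangle\notin(0,\eta_1)\}$. On $[0,S_1\wedge\tau]$, the contribution of the mutant sub-population to the death rate of each resident trait is bounded by $\bar{\alpha}\eta_1$, so Proposition~\ref{prop:FW} (with perturbation) implies that the resident densities remain in the $\varepsilon$-neighborhood of $\bar{\mathbf{n}}(\mathbf{x})$ with probability tending to 1. Conditionally on this event, the mutant population $\langle\nu^K_t,\mathbf{1}_{\{y\}}\rangle$ can be coupled between two linear birth--death branching processes with birth rate $\lambda(y)$ and death rate $\mu(y)+\sum_{i=1}^d\alpha(y,x_i)\bar{n}_i(\mathbf{x})\pm c\varepsilon$, whose growth rates are $f(y;\mathbf{x})\pm c\varepsilon$. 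Standard branching-process estimates (for instance, Theorem~1 of~\cite{athreya-ney-72}, as used in~\cite{champagnat-06}) then give: if $f(y;\mathbf{x})<0$, the mutant goes extinct in time $O(\log K)$ with probability tending to $1$; if $f(y;\mathbf{x})>0$, either it reaches the level $\eta_1$ in time $O(\log K)$ with probability close to $[f(y;\mathbf{x})]_+/\lambda(y)$, or it goes extinct in time $O(\log K)$ with the complementary probability. Letting $\varepsilon\to 0$ at the end removes the perturbation.

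\emph{Phase 2 (Lotka--Volterra regime).} On the event that the mutant reaches level $\eta_1$ at time $S_1$, Proposition~\ref{prop:EK} applied to $LV(d+1,(x_1,\ldots,x_d,y))$ ensures that on any finite time window $[S_1,S_1+T_2]$ the densities $(\langle\nu^K_t,\mathbf{1}_{\{x_i\}}\rangle)_{1\leq i\leq d},\langle\nu^K_t,\mathbf{1}_{\{y\}}\rangle)$ follow the deterministic flow uniformly as $K\to\infty$. Choosing $\eta_1$ small enough that the starting point lies in the neighborhood~$\cal U$ of Assumption~(B1), there exists a deterministic $T_2<\infty$ such that at time $S_1+T_2$ the densities are in the $\eta_2$-neighborhood of $\mathbf{n}^*$, again with probability tending to~$1$.

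\emph{Phase 3 (elimination of transient traits).} Starting from the $\eta_2$-neighborhood of $\mathbf{n}^*$, split the traits according to $I(\mathbf{n}^*)$. For $i\in I(\mathbf{n}^*)$, Assumption~(B2) says $\{x_i:i\in I(\mathbf{n}^*)\}$ coexist, and Proposition~\ref{prop:FW} applied with the perturbation coming from the remaining traits (whose total mass stays below $\bar\alpha\eta_2$) keeps those densities within the required $\varepsilon$-neighborhood. For $j\not\in I(\mathbf{n}^*)$, Assumption~(B2) gives $f(x_j;\mathbf{x}^*)<0$, so the density of trait $x_j$ is dominated by a sub-critical branching process with growth rate $f(x_j;\mathbf{x}^*)+O(\eta_2)<0$; it reaches $0$ in time $O(\log K)$ with probability tending to~$1$. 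At that moment either $\tau_1$ has just occurred (if $y\in I(\mathbf{n}^*)$, corresponding to successful invasion) or $\tau_2$ has (if $y\not\in I(\mathbf{n}^*)$, so that $y$ belongs to the traits that are eliminated while the residents return to $\bar{\mathbf{n}}(\mathbf{x})$).

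Combining these three phases, the probabilities $\PP(\tau_1<\tau_2)$ and $\PP(\tau_2<\tau_1)$ split according to the Phase~1 dichotomy, yielding the limits $[f(y;\mathbf{x})]_+/\lambda(y)$ and its complement. The total duration $\tau_1\wedge\tau_2$ is bounded above by $C\log K$ with probability tending to $1$ for some deterministic $C$, while the next mutation time $\tau$ is of order $1/(Ku_K)$; since $\log K\ll 1/(Ku_K)$ by~(\ref{eq:u_K-K}), the third assertion $\PP(\tau_1\wedge\tau_2<\eta/(Ku_K)\wedge\tau)\to 1$ follows for every $\eta>0$. The main technical obstacle is Phase~1: one must couple the mutant trajectory with two branching processes whose rates bracket the true (time-inhomogeneous) rates uniformly on the event that the resident densities stay close to $\bar{\mathbf{n}}(\mathbf{x})$, and quantify both the extinction/survival dichotomy and the $O(\log K)$ hitting time estimate with error vanishing as $\varepsilon,\eta_1\to 0$; this is precisely the content of the technical lemmas of~\cite{champagnat-06}, which extend to the present multi-trait setting because only the resident-side perturbation bound is used.
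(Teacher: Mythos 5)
Your proposal follows essentially the same route as the paper: the same three-phase decomposition (branching-process coupling while the mutant is rare, Proposition~\ref{prop:EK} for the deterministic Lotka--Volterra transit to $\mathbf{n}^*$, then sub-critical branching-process domination of the traits outside $I(\mathbf{n}^*)$), with Proposition~\ref{prop:FW} controlling the resident densities under a small perturbation in the first and last phases, and the $O(\log K)\ll 1/(Ku_K)$ comparison for the final assertion. One bookkeeping slip in your Phase~3: after a successful invasion the event reached is always $\tau_1$ (the population settles at $\mathbf{n}^*$, with the traits outside $I(\mathbf{n}^*)$ --- possibly including $y$ itself --- driven to $0$), not $\tau_2$ when $y\notin I(\mathbf{n}^*)$; indeed $\tau_2$ requires the residents to return to $\bar{\mathbf{n}}(\mathbf{x})$, which cannot happen after invasion since $(\bar{\mathbf{n}}(\mathbf{x}),0)$ is then unstable, and $\tau_2$ is produced only by the Phase-1 extinction branch. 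Taken literally your parenthetical would contradict the limit $\PP(\tau_1<\tau_2)\to[f(y;\mathbf{x})]_+/\lambda(y)$ whenever $y\notin I(\mathbf{n}^*)$, so the attribution should be corrected, but this does not affect the validity of the overall argument.
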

The proof of this lemma is similar to the proof of Lemma~3
in~\cite{champagnat-06}. The main steps are the following. Assume
first that $\varepsilon<1/2$. We introduce the following stopping
times: \begin{align*}
  R_\varepsilon^K&=\inf\{t\geq 0:\exists i\in\{1,\ldots,d\},\ |\langle
    \nu_t^{K},\mathbf{1}_{\{x_i\}}\rangle-\bar{n}_{i}(\mathbf{x})|\geq\varepsilon\}
    \\ S_\varepsilon^K&=\inf\{t\geq 0:\langle
    \nu_t^{K},\mathbf{1}_{\{y\}}\rangle\geq\varepsilon\} \\
    S_0^K&=\inf\{t\geq 0:\langle
    \nu_t^{K},\mathbf{1}_{\{y\}}\rangle=0\}.
\end{align*}
$R^K_\varepsilon$ is the time of drift of the resident population away
from its equilibrium, $S_\varepsilon^K$ is the time of \emph{invasion}
of the mutant trait (time $t_1$ in Fig.~\ref{fig:inv-fix}) and $S_0^K$
is the time of extinction of the mutant trait. By the second part of
Proposition~\ref{prop:FW}, it can be proven exactly as
in~\cite{champagnat-06} that there exists $\rho,V>0$ and $c<1$ such
that, for $K$ large enough,
$$
\PP\Big(\frac{\rho}{K u_K}<\tau\Big)\geq 1-\varepsilon
\quad\mbox{and}\quad
\PP(S^K_{\varepsilon}\wedge\tau\wedge e^{KV}<R^K_{\varepsilon/c})\geq
1-\varepsilon.
$$
Then, on $[0,\tau\wedge S_\varepsilon^K\wedge R^K_{\varepsilon/c}]$,
by computing lower and upper bounds on the death rate of a mutant
individual, it can be easily checked that, for $K$ large enough,
almost surely,
$$
\frac{Z^{1,\varepsilon}_t}{K}\leq\langle
\nu_t^{K},\mathbf{1}_{\{y\}}\rangle\leq\frac{Z^{-1,\varepsilon}_t}{K}
$$
where, for $i=1$ or $-1$, $Z^{i,\varepsilon}$ is a continuous-time
branching process such that $Z^{i,\varepsilon}_0=1$ and with birth rate
$(1-i\varepsilon)\lambda(y)$ and death rate
$$
\mu(y)+\sum_{j=1}^d\alpha(y,x_i)\bar{n}_i(\mathbf{x})
+i(d+1)\bar{\alpha}\frac{\varepsilon}{c}.
$$
Next, we use the results of Theorem~4 of~\cite{champagnat-06} on
branching processes in order to control the probability that
$Z^{i,\varepsilon}/K$ exceeds $\varepsilon$ before it reaches 0,
and to upper bound the time at which one of these events happens.
As in~\cite{champagnat-06}, we obtain that there exists $C>0$ such
that, for all $\eta>0$, $\varepsilon>0$ sufficiently small and $K$
large enough, \begin{align}
  \PP\Big(\tau_2<\tau\wedge\frac{\eta}{Ku_K}\wedge
  S^K_\varepsilon\wedge R^K_{\varepsilon/c}\Big) & \geq
  1-\frac{[f(y;\mathbf{x})]_+}{\lambda(y)}-C\varepsilon \label{eq:pf-C6}
  \\
  \PP\Big(S_\varepsilon^K<\tau\wedge\frac{\eta}{Ku_K}\wedge
  S^K_0\wedge R^K_{\varepsilon/c}\Big) & \geq
  \frac{[f(y;\mathbf{x})]_+}{\lambda(y)}-C\varepsilon. \notag
\end{align} On the event $\{S_\varepsilon^K<\tau\wedge S^K_0\wedge
R^K_{\varepsilon/c}\}$, we introduce for $\varepsilon'>0$ the
 stopping times
\begin{align*}
  T^K_\varepsilon&=\inf\{t\geq S_\varepsilon^K:\forall
  i\in\{1,\ldots,d\},\ |\langle
  \nu_t^{K},\mathbf{1}_{\{x_i\}}\rangle-n^*_{i}|<\varepsilon^2\mbox{\
    and\ }|\langle
  \nu_t^{K},\mathbf{1}_{\{y\}}\rangle-n^*_{d+1}|<\varepsilon^2\}, \\
  U^K_{\varepsilon,\varepsilon'}&=\inf\{t\geq
  T^K_\varepsilon: \exists i\in I(\mathbf{n}^*),\ |\langle
  \nu_t^{K},\mathbf{1}_{\{x_i\}}\rangle-n^*_{i}|\geq\varepsilon'\} \\
   V^K_{\varepsilon}&=\inf\{t\geq
  T^K_\varepsilon: \exists i\not\in I(\mathbf{n}^*),\ \langle
  \nu_t^{K},\mathbf{1}_{\{x_i\}}\rangle\geq\varepsilon\}.
\end{align*}
 We next use the Markov property at time $S_\varepsilon^K$ and
apply Proposition~\ref{prop:EK} as in~\cite{champagnat-06} to
obtain that there exists $C'>C$ such that, for $K$ large enough,
$$
\PP\Big(S_\varepsilon^K<T^K_\varepsilon<\tau\wedge\frac{\eta}{K
  u_K}\Big)\geq \frac{[f(y;\mathbf{x})]_+}{\lambda(y)}-C'\varepsilon.
$$
Next, we can use again Proposition~\ref{prop:FW} to prove that
there exists $V'>0$, $C''>C'$ and $c'<1$ such that
$$
\PP\big(S_\varepsilon^K<T^K_\varepsilon<V_\varepsilon^K\wedge\tau\wedge
e^{KV'}< U^K_{\varepsilon,\varepsilon/c'}\big)\geq
\frac{[f(y;\mathbf{x})]_+}{\lambda(y)}-C''\varepsilon.
$$
In a last step, we can as before prove that, for all
$t\in[T_\varepsilon^K,U^K_{\varepsilon,\varepsilon/c'}\wedge
V^K_\varepsilon]$ and for all $i\not\in I(\mathbf{n}^*)$,
$$
\langle\nu^K_t,\mathbf{1}_{\{x_i\}}\rangle\leq\frac{\tilde{Z}^{i,\varepsilon}_t}{K},
$$
where $\tilde{Z}^{i,\varepsilon}$ is a continuous-time branching
process such that
$\tilde{Z}^{i,\varepsilon}_{T_\varepsilon^K}=\lceil\varepsilon^2
K\rceil$ and with birth rate $\lambda(x_i)$ and death rate
$$
\mu(x_i)+\sum_{j\in
  I(\mathbf{n}^*)}\alpha(x_i,x_j)n^*_j
-\mbox{Card}(I(\mathbf{n}^*))\bar{\alpha}\frac{\varepsilon}{c'}.
$$
Since, by Assumption~(B2), $f(x_i;\mathbf{x}^*)<0$, this branching
process is sub-critical if $\varepsilon$ is small enough. Hence,
with arguments similar to the ones in~\cite{champagnat-06}
(especially the results of Theorem~4), we can prove  that there
exist $C'''>0$ such that, for all $\eta>0$, $\varepsilon>0$
sufficiently small and $K$ large enough,
$$
\PP\Big(S^K_\varepsilon<\tau_1<\tau\wedge\frac{\eta}{K u_K}\wedge
U^{K}_{\varepsilon,\varepsilon/c'}\Big)\geq
\frac{[f(y;\mathbf{x})]_+}{\lambda(y)}-C'''\varepsilon.
$$
Combining this with~(\ref{eq:pf-C6}), we obtain
Lemma~\ref{lem:aft-mut}  by letting $\varepsilon$ go to 0.

\bigskip
\noindent Finally,~(\ref{eq:pf-C6-0}) is deduced from these lemmas
exactly as in~\cite{champagnat-06} and similarly, the proof of
Theorem~\ref{thm:PES-fdd} from~(\ref{eq:pf-C6-0}).\hfill$\Box$

\bigskip \noindent {\bf Acknowledgements\phantom{9}} We thank Michel
Bena\"im and Salome Martinez who brought to our attention the
reference \cite{zeeman-93} and Michel Bena\"im for helpful suggestions
concerning Lotka-Volterra systems.

\bibliographystyle{abbrv}
\bibliography{biblio-bio,biblio-math}

\end{document}